\newtheorem{theorem}{Theorem}[section]
\newtheorem{definition}[theorem]{Definition}
\newtheorem{corollary}[theorem]{Corollary}
\newtheorem{lemma}[theorem]{Lemma}
\numberwithin{equation}{section} \setlength{\textheight}{9.25 in}
\begin{document}
\title[ZCP wavelets]{Wavelets for $L^2(B(0, 1))$ using Zernike polynomials}
\author{Somantika Datta}
\address{Department of Mathematics and Statistical Science, University of Idaho, ID 83844-1103, USA}
\email{sdatta@uidaho.edu}
%
%second author
\author{Kanti B. Datta}
\address{Department of Electrical Engineering, Indian Institute of Technology, Kharagpur, 721302, India}
\email{attadbitnak@yahoo.com}

%\author{Somantika Datta}
%\address{Department of Mathematics, University of Idaho, Moscow, ID 83844-1103, USA}
%\email{sdatta@uidaho.edu}
%
% second author
%\author{Jesse Oldroyd}
%\address{Department of Mathematics and Computer Science, West Virginia Wesleyan College, Buckhannon, WV 26201, USA}
%\email{joldroyd.j@wvwc.edu}
%
%\thanks{This material is based upon work supported by the National Science Foundation under Award No. CCF-1422252}
%
%
%\date{\today} \footnote{Name of this tex file: \currfilename}
%
\maketitle
\doublespacing
\begin{abstract}
%% Text of abstract
 A set of orthogonal polynomials on the unit disk $B(0,1)$ known as Zernike polynomials are commonly used in the analysis and evaluation of optical systems. Here Zernike polynomials are used to construct localized bases and \textit{wavelet functions} for polynomial subspaces of $L^2(B(0,1)).$ This naturally leads to a multiresolution analysis of  $L^2(B(0,1)).$ Previously, other authors have dealt with the one dimensional case, and used orthogonal polynomials of a single variable to construct time localized bases for polynomial subspaces of an $L^2$-space with arbitrary weight. Due to the nature of Zernike polynomials, the wavelet decomposition that results here is well-suited for the analysis of two-dimensional signals defined on circular domains. This is shown by some experimental results done on corneal data.  
\end{abstract}
\textsc{Keywords:} \keywords{corneal data, scaling functions, wavelets, Zernike polynomials}

\textsc{2000 MSC:} \subjclass{42C15; 94Axx}
%
%% main text

%My webpage is: \href{http://www.webpages.uidaho.edu/sdatta}{www.webpages.uidaho.edu/sdatta}

\section{Introduction}
\label{sec:Intro}
%
%\subsection{Background} \label{sec:Background}
%$\mathbb{R}$,
%$\mathds{1}$, $\mathbbm{1}$ $\sum_{n=1}$,
%$\displaystyle \sum_{\lambda \in \Lambda}$
%
%\subsection{Notation and Background}\label{sec:Notation}
%
\subsection{Background} \label{Subsec:Background}
Wavelets can be viewed as a tool to obtain expansions of functions in Hilbert spaces. Traditionally, a wavelet system is made of a function $\psi$ and all integer translations and dilations of $\psi:$
    $$
    \psi_{j,k} =2^{j/2} \psi(2^j x - k), \quad x \in \mathbb{R}, \ j,k \in \mathbb{Z},
    $$
such that  $\{\psi_{j,k}\}_{j,k \in \mathbb{Z}}$ is an orthonormal basis (ONB) of $L^2(\mathbb{R}).$ The first example of such a $\psi$ was given by Haar in 1910 \cite{Haar1910}, and is called the Haar wavelet.  \textit{Multiresolution analysis} (MRA) is a tool to construct wavelets, and is a central ingredient in wavelet analysis. The general framework of MRA was devised  by Mallat \cite{Mallat89} and Meyer \cite{Meyer}. An MRA consists of a nested sequence $\{V_j\}_{j \in \mathbb{Z}}$ of closed subspaces of $L^2(\mathbb{R}),$ giving a decomposition of $L^2(\mathbb{R}).$ In wavelet analysis, a signal $f$ is decomposed into pieces, each subspace $V_j$ has a piece of $f.$ These pieces, or projections, of $f$ give finer and finer details of $f.$ In connection with approximation theory, approximation of $f$ in $L^2(\mathbb{R})$ can be performed via the projection of $f$ on one of these subspaces.

%One of the main goals in signal and image processing is to be able to use as few data points as possible to faithfully represent the signal. When a signal $f$ is represented in terms of a wavelet basis $\{\psi_{j,k}\}_{j,k \in \mathbb{Z}}$, then
%    $$
%    f(x) = \sum_{j, k} c_{j,k} \psi_{j,k},
%    $$
%and we want a good reconstruction of $f$ by using only a few of the $c_{j,k}$s. Fewer coefficients allows faster transmission and easier storage. Wavelets have turned out to be very efficient in image compression, and most natural images can be reconstructed with high accuracy by using only a small fraction ($5$-$10\%$) of the corresponding wavelet coefficients. Among the most spectacular early applications of wavelets are the wavelet-based FBI standard for storing and retrieving fingerprints, and the wavelet-based JPEG-2000 standard for image compression \cite{MP2008}.

When a signal $f$ is represented in terms of a wavelet basis $\{\psi_{j,k}\}_{j,k \in \mathbb{Z}}$, then it can be written as
    \begin{equation} \label{EQ:WaveletRepresentation}
    f(x) = \sum_{j, k} c_{j,k} \psi_{j,k} (x) . 
    \end{equation}
A representation such as (\ref{EQ:WaveletRepresentation}) gives useful information about when or where a certain frequency occurs in the signal $f.$ Different frequencies appear with different values of $j$ and this fact is useful in time-frequency or space-frequency analysis. Slow oscillations of $f$ will lead to nonzero coefficients for small values of $j,$ whereas fast oscillations will lead to nonzero coefficients for large values of $j.$ The location of a frequency is indicated by the corresponding $k.$ Such information regarding the location of a frequency is not evident in the Fourier series or Fourier-Zernike series ((\ref{EQ:ZernikeFourierExp}) below) of a signal.

A set of two dimensional orthogonal functions defined on the unit disk $B(0,1)$, called \textit{Zernike polynomials}, is used in the analysis of optical systems by expanding optical wavefront functions as series of these functions \cite{Zernike1934, Nijboer1942}. The Zernike polynomials form an orthonormal basis for $L^2(B(0,1)),$ the space of square integrable functions on the unit disc, and can thus be used to effectively represent signals on circular domains \cite{Zernike1934, Nijboer1942}.
In this work, Zernike polynomials have been used to construct \textit{localized bases} for polynomial subspaces of $L^2(B(0,1)).$ This is done using kernel polynomials made from Zernike polynomials and exploiting the property that kernel polynomials are localized. This has been inspired by previous work done in \cite{Fischer1997} for the 1D case where the authors have used orthogonal polynomials to construct time-localized bases for polynomial subspaces of an $L^2$-space with arbitrary weight. The construction in \cite{Fischer1997} is based on the general theory of kernel polynomials \cite{Szego1975}, and this has been used there to get a multiresolution analysis (MRA) of a weighted $L^2$-space that is different from traditional MRA. Similarly, we will be able to get a multiresolution of $L^2(B(0,1))$ by decomposing into subspaces that are each spanned by Zernike polynomials of certain degrees only (indicating signal frequencies). These subspaces are called \textit{wavelet spaces}.
In \cite{Kilgore1996}, a decomposition of the space $L^2(-1, 1)$ has been investigated using wavelets from algebraic polynomials using Chebyshev-weight.  Even though multidimensional wavelets have been constructed and studied by some researchers (see, for example, \cite{Madych1992, GM1992}), such analysis is suited for rectangular domains, and the resulting wavelets are meant for functions on $\mathbb{R}^d,$ $d \geq 2.$ The wavelet analysis that is proposed here using Zernike polynomials is suitable for signals on circular domains  like that of optical data, and can be useful in the space-frequency analysis of such data. The primary motivation is efficient representation of 2D signals defined on circular domains, such as corneal surfaces and certain optical systems; wavelet functions constructed from Zernike polynomials aim at detecting the location of abonormalities and may be used for characterizing aberrations. %Good models of corneal surfaces are important for characterizing aberrations and detecting abnormalities of the cornea. 
We have implemented our 2D wavelet functions in the reconstruction of corneal data from its wavelets coefficients, and used the information of the wavelet coefficients to study location of spatial frequency in the data. Our main goal here is the development of theoretical results, and the numerical experiments have been done to simply demonstrate the theory.
%A set of 2-D orthogonal functions known as Zernike polynomials defined on a disk are used in the analysis and evaluation of optical systems with circular pupils by expanding optical wavefront functions in series of these polynomials \cite{Zernike1934, Nijboer1942}.
%The set of circle polynomials are orthogonal and form a complete set of polynomials in the unit disk $B(0,1) = \{(x,y) \in \mathbb{R}^2 : x^2 + y^2 \leq 1\}. $

\subsection{Preliminaries and Notation} \label{Subsec:P&N}

Let $\mathbb{N}_0 = \{ 0, 1, 2, 3, \ldots\} = \mathbb{N} \cup \{0\}.$  The \textit{Zernike polynomials} are defined in terms of complex exponentials as
    \begin{equation} \label{EQ:ZCP}
    Z_n^m(r, \phi)
    =
    \gamma_{n}^m R_n^{|m|}(r)e^{i m \phi} , \quad 0\leq r \leq 1, \ 0 \leq \phi < 2 \pi,
    \end{equation}
where $n \in \mathbb{N}_0,$ $|m| \leq n,$ $n - m$ is even, $\gamma_{n}^m$ is the normalization constant given by
    $$
    \gamma_{n}^m =
     \sqrt{\frac{n+1}{\pi}} \quad \forall m,
    $$
$R_n^{|m|}$ are the \textit{radial parts} of the polynomials given by
    \begin{equation} \label{EQ:RadialPolySeries}
    R_n^{|m|}(r) = \sum_{s = 0}^{(n - |m|)/2} \frac{(-1)^s(n-s)!}{s! ((n+|m|)/2 - s)!((n - |m|)/2 - s)!} r^{n - 2s} ,
    \end{equation}
  and will be referred to as \textit{radial polynomials}. 
When a single index notation is needed, the conversion from $Z_{n}^m$ to $Z_j,$  $j \in \mathbb{N}_0,$ is made by the formula
    \begin{equation} \label{EQ:ZCPIndex}
    j = \frac{n(n+2) + m}{2}.
    \end{equation}
It is known that the Zernike polynomials $\{Z^m_n(r, \phi)\}_{\substack{n = 0,|m| \leq n \\ (n - m) \textrm{even}}}^{\infty}$ given in (\ref{EQ:ZCP}) form a complete orthonormal set \cite{Nijboer1942} for $L^2(B(0,1)),$ the space of square integrable functions on the unit disk $B(0,1) = \{(x,y) \in \mathbb{R}^2 : x^2 + y^2 \leq 1\},$ with respect to the inner product
    \begin{equation} \label{EQ:InnerProduct}
    \langle f, g \rangle : = \int_0^1 \int_0^{2\pi} f(r, \phi) \overline{g(r, \phi )} \ r \ d\phi \ d r.
    \end{equation}
  In polar form, the Zernike polynomials of (\ref{EQ:ZCP}) can be written  as
    $$
    G^m_n(r, \phi)
    =
    \left\{
    \begin{array}{cc}
    \gamma^m_n R^{|m|}_n (r) \cos (m \phi) , & \textrm{if $m \geq 0,$} \\
    \gamma^m_n R^{|m|}_n (r) \sin (|m| \phi) , & \textrm{if $m < 0,$}
    \end{array}
    \right.
    $$
where $n \in \mathbb{N}_0,$ $|m| \leq n,$ $n - m$ is even, $\gamma^m_n$ are normalization constants given by
    $$
    \gamma_{n}^m = \left\{
    \begin{array}{cc}
     \sqrt{\frac{n+1}{\pi}} & \textrm{if} \ m = 0 \\
     \sqrt{\frac{2(n+1)}{\pi}} & \textrm{if} \ m \neq 0,
     \end{array}
    \right.
    $$
and $R_n^{|m|}$ are radial polynomials already given in (\ref{EQ:RadialPolySeries}). Each $G^m_n(r, \phi)$ is a polynomial in $x$, $y$ of degree $n$, and $\{G^m_n(r, \phi)\}_{\substack{n = 0,|m| \leq n \\ (n - m) \textrm{even}}}^{\infty}$ form a complete orthonormal set in $L^2(B(0, 1))$  with respect to the same inner product given above in (\ref{EQ:InnerProduct}) . Any $f \in L^2(B(0, 1))$ can be written in terms of $Z^m_n$s or $G^m_n$s as follows.
    \begin{eqnarray}
    f(\rho, \theta)
    &=&
    \sum_{n=0}^{\infty} \sum_{\substack{m = 0 \\ (n - m) \textrm{even}}}^n  \left[ A_{nm}\cos m\theta + B_{nm} \sin m\theta \right] R^m_n(\rho) \nonumber \\
    &=&
    \sum_{n=0}^{\infty} \sum_{\substack{m = -n \\ (n - m) \textrm{even}}}^n  c_{nm} e^{i m \theta} R^{|m|}_n (\rho). \label{EQ:ZernikeFourierExp}
    \end{eqnarray}
The connection between the corresponding coefficients is given by the following. For all $n \in \mathbb{N}_0,$ $(n - m)$ even, $m \leq n,$
    \begin{eqnarray*}
    c_{nm} &=& \frac{A_{nm} - i B_{nm}}{2} , \quad m \in \mathbb{N}_0, \\
    c_{n(-m)} &=& \frac{A_{nm} + i B_{nm}}{2} , \quad m \in \mathbb{N},
    \end{eqnarray*}
and for all $n, m \in \mathbb{N}_0,$ $(n - m)$ even, $m \leq n,$
    \begin{eqnarray*}
    A_{nm} &=& c_{nm} + c_{n(-m)}, \\
    B_{nm} &=& i (c_{nm} - c_{n(-m)}).
    \end{eqnarray*}
  A series as in (\ref{EQ:ZernikeFourierExp}) is called the \textit{Fourier-Zernike series} of $f.$
    
%Therefore, any $f \in L^2(B(0, 1))$ can be expressed in polar coordinates as
%    $$
%    f(\rho, \theta) = \sum_{n=0}^{\infty} \sum_{\substack{m=-n \\ (n-m) \textrm{even}}}^n \langle f, Z^m_n \rangle Z^m_n (\rho, \theta) .
%    $$
%%
%%
%Writing the inner product in integral form gives,
%    \begin{equation*}
%    f(\rho, \theta)
%    =
%    \int_{0}^1 \int_{0}^{2\pi} \Big[ \sum_{n=0}^{\infty} \sum_{\substack{m=-n \\ (n-m) \textrm{even}}}^n Z^m_n(\rho, \theta) \overline{Z_n^m (r, \phi)}\Big]f(r, \phi)  \ r \ d\phi \ d r .
%    \end{equation*}
%%
%Let
%    $$
%    K(r, \phi; \rho, \theta) := \sum_{n=0}^{\infty} \sum_{\substack{m=-n \\ (n-m) \textrm{even}}}^n \overline{Z^m_n(\rho, \theta)} Z_n^m (r, \phi).
%    $$
%Then
%    $$
%    f(\rho, \theta)
%    =
%    \langle f(., .), K(.,.;\rho, \theta)\rangle
%    $$
%and this is called the reproducing property of the kernel $K(.,.;\rho, \theta)$ with respect to the inner product $\langle ., . \rangle$ and parameters $\rho$, $\theta.$
For a given pair $(\rho, \theta)$ with $0 \leq \rho \leq 1$ and $0 \leq \theta < 2\pi,$ the polynomial
    \begin{equation} \label{EQ:KernelPolynomials}
    K_N(r, \phi; \rho, \theta) := \sum_{n=0}^{N} \sum_{\substack{m=-n \\ (n-m) \textrm{even}}}^n \overline{Z^m_n(\rho, \theta)} Z_n^m (r, \phi)
    \end{equation}
is called the $N$th \textit{kernel polynomial} with respect to the inner product in (\ref{EQ:InnerProduct}) and the parameters $\rho,$ $\theta$. See \cite{Szego1975} for the general theory of kernel polynomials.
%Then, by continuity of inner product,
%    $$
%    f(\rho, \theta)
%    =
%    \lim_{N \to \infty} \langle f(., .), K_N(.,.;\rho, \theta)\rangle .
%    $$

\noindent
%$\mathbb{N}_0$ represents the set $\{0, 1,2,3, \ldots\}.$ 
For $\alpha = (\alpha_1, \alpha_2) \in \mathbb{N}_0^2$ define
    $$
    |\alpha| := \alpha_1 + \alpha_2,
    $$
and for $x = (x_1, x_2)$ in $\mathbb{R}^2$,
    $$
    x^{\alpha} := x_1^{\alpha_1} x_2 ^{\alpha_2}.
    $$
Let
    $$
    V_N := \textrm{span}\{x^{\alpha} : |\alpha| \leq N, x_1^2 + x_2^2 \leq 1\},
    $$
i.e., $V_N$ is the space of all polynomials in two variables of degree at most $N$ defined on the unit disk $B(0, 1).$ Here $N \in \mathbb{N}_0.$ The dimension of $V_N$ is $\frac{(N+1)(N+2)}{2}$ \cite{YuanXu2004}. 
If $N=0$, $V_N$ is of dimension 1 and is the space generated by $\{1\},$ i.e., the constant functions. In fact, $V_0 = \textrm{span}\{\mathds{1}_{B(0,1)} \},$ where $\mathds{1}_{B(0,1)}$ is the characteristic function of $B(0,1).$

%
%$\mathbb{N}_0$ represents the set $\{0, 1,2,3, \ldots\}.$ For $\alpha = (\alpha_1, \alpha_2, \ldots, \alpha_d) \in \mathbb{N}_0^d$
%    $$
%    |\alpha| = \alpha_1 + \alpha_2 + \cdots + \alpha_d .
%    $$
%For $x = (x_1, x_2, \ldots, x_d)$ in $\mathbb{R}^d$, define
%    $$
%    x^{\alpha} := x_1^{\alpha_1} x_2 ^{\alpha_2} \cdots x_d^{\alpha_d}.
%    $$
%Then $|\alpha|$ is called the total degree of $x^{\alpha}.$ Let
%    $$
%    V_N^d := \textrm{span}\{x^{\alpha} : |\alpha| \leq N, \alpha \in \mathbb{N}_0^d\},
%    $$
%i.e., $V_N^d$ is the space of all polynomials in $d$ variables of degree at most $N.$ In what follows, $d = 2$, and so we will drop the superscript from now on, and $V_N$ will be written to mean $V_N^2.$
Note that if $p \in V_N$ then
    \begin{equation} \label{EQ:RepPropKernelPoly}
    \langle p, K_N(.,.; \rho, \theta)\rangle = \sum_{n=0}^N \sum_{\substack{m=-n \\ (n-m) \textrm{even}}}^n \langle p, Z^m_n \rangle Z^m_n (\rho, \theta) =  p(\rho, \theta) .
    \end{equation}
Thus the kernel polynomial $K_N$ has the above reproducing property for the space $V_N.$ As mentioned earlier, the notion of a kernel polynomial is the main tool used to construct localized bases for the polynomial subspaces $V_N.$ That the kernel polynomial $K_N(r, \phi; \rho, \theta)$ is localized around $(\rho, \theta)$ follows from Lemma~\ref{LEM:OptimizationProblemSolution} below.

\subsection{Outline}\label{Subsec:Outline}
In Section~\ref{SEC:ScalingFunctions}, scaling functions using Zernike polynomials are defined for the space $V_N$ and various properties of the scaling functions are discussed. Wavelet functions and duals of wavelets are presented in Section~\ref{SEC:Wavelets} and Section~\ref{SEC:Duals}, respectively. 
%In Section~\ref{SEC:Wavelets}, wavelet functions are introduced. 
A multiresolution analysis of $L^2(B(0,1))$ using the spaces $V_N$ and the scaling functions of Section~\ref{SEC:ScalingFunctions} is discussed in Section~\ref{SEC:MRA}. Finally, numerical results demonstrating the theory are given in Section~\ref{SEC:NumResults}.

\section{Scaling functions} \label{SEC:ScalingFunctions}
\noindent
Fix $N \in \mathbb{N}.$ In this section, scaling functions for $V_N$ are defined using kernel polynomials made of Zernike polynomials. Some useful properties of scaling functions are presented. Each scaling function for $V_N$ is parametrized by a point $(\rho, \theta)$ in $B(0, 1),$ and is localized around this point (Lemma~\ref{LEM:OptimizationProblemSolution}).  This important property is what enables us to get a localized basis out of a set of scaling functions. 
\begin{lemma}\label{LEM:CirclePolyONB}
The set $\{R^{|m|}_n(r) e^{i m \phi}\}_{\substack{n = 0,|m| \leq n \\ (n - m) \ \textrm{even}}}^N$ \footnote{To keep the notation less cumbersome we shall not always write that $m - n$ is even in the subscript.}  is an orthogonal basis for $V_N.$
\end{lemma}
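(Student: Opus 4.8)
The plan is to verify three things in turn: that every element of the proposed set lies in $V_N$, that the set is orthogonal (and hence linearly independent), and that its cardinality equals $\dim V_N = \frac{(N+1)(N+2)}{2}$. Together these force the set to be an orthogonal basis, since a linearly independent family of $\dim V_N$ vectors sitting inside $V_N$ must span it.

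First I would observe that $R_n^{|m|}(r)\,e^{im\phi} = (\gamma_n^m)^{-1} Z_n^m(r,\phi)$ is merely a nonzero scalar multiple of a Zernike polynomial. By the facts recorded in Section~\ref{Subsec:P&N}, each $Z_n^m$ (equivalently $G_n^m$) is a polynomial in $x,y$ of degree $n$, so for $n \leq N$ each $R_n^{|m|}(r)\,e^{im\phi}$ is a polynomial of degree at most $N$ on $B(0,1)$, i.e.\ an element of $V_N$. Orthogonality is then inherited for free: since $\{Z_n^m\}$ is orthonormal in $L^2(B(0,1))$ with respect to the inner product (\ref{EQ:InnerProduct}) and $\gamma_n^m \neq 0$, the functions $R_n^{|m|}\,e^{im\phi}$ are mutually orthogonal, each of norm $(\gamma_n^m)^{-1} \neq 0$; being nonzero and orthogonal, they are linearly independent.

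The only step demanding a genuine computation is the dimension count. For fixed $n$, the admissible indices are those $m$ with $|m| \leq n$ and $n - m$ even, namely $m = -n, -n+2, \ldots, n-2, n$, which is exactly $n+1$ values. Summing over $n = 0, 1, \ldots, N$ yields $\sum_{n=0}^N (n+1) = \frac{(N+1)(N+2)}{2}$, precisely $\dim V_N$. A linearly independent set of that many vectors contained in $V_N$ therefore spans $V_N$, and being orthogonal it is an orthogonal basis.

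I do not anticipate a real obstacle here: once one recognizes the $R_n^{|m|}\,e^{im\phi}$ as unnormalized Zernike polynomials, both membership in $V_N$ and orthogonality follow immediately from the properties already stated, and the entire weight of the argument rests on matching the count to $\frac{(N+1)(N+2)}{2}$. The single point requiring care is the parity bookkeeping in that count, ensuring there are exactly $n+1$ admissible values of $m$ for each $n$.
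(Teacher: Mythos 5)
Your proof is correct, and its skeleton coincides with the paper's: both reduce the lemma to (a) membership of each $R_n^{|m|}e^{im\phi}$ in $V_N$, (b) orthogonality inherited from the orthonormality of the Zernike polynomials (each element being the nonzero scalar multiple $(\gamma_n^m)^{-1}Z_n^m$), and (c) a cardinality count matching $\dim V_N=\tfrac{(N+1)(N+2)}{2}$, so that a linearly independent family of that size must span. The one place where you genuinely diverge is in how the count is carried out, and this is where essentially all of the paper's work lies. The paper passes to the real form $\{R_n^m\cos m\phi,\,R_n^m\sin m\phi\}$, splits into cases according to the parity of $N$, counts the number of radial polynomials ($N_{RO}$ or $N_{RE}$), doubles, and subtracts the overcount at $m=0$. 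You instead count directly in the complex indexing: for each fixed $n$ the admissible $m$ are $-n,-n+2,\dots,n$, exactly $n+1$ of them, and $\sum_{n=0}^N(n+1)=\tfrac{(N+1)(N+2)}{2}$. Your version is shorter, avoids the parity case split entirely, and makes transparent why the answer equals $\dim V_N$ (it is the same triangular-number sum that computes the dimension of the space of bivariate polynomials of degree at most $N$); the paper's version has the minor virtue of exhibiting the count of radial polynomials and of the real trigonometric basis explicitly, which it reuses implicitly elsewhere. Both counts are correct, and your argument is complete as written.
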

\begin{proof}
Note that $\{R^{|m|}_n(r) e^{i m \phi}\}_{\substack{n = 0,|m| \leq n \\ (n - m) \ \textrm{even}}}^{N}$ or $\{R^m_n(r) \cos m \phi, R^m_n(r) \sin m \phi \}_{\substack{n = 0, 0 \leq m \leq n \\ (n - m) \ \textrm{even}}}^N$  consists of polynomials in $x$, $y$ of degree up to $N$ \cite{Nijboer1942}. It is also known that this is an orthogonal set. Since the dimension of $V_N$ is $\binom{N + 2}{N} = \frac{(N+1)(N+2)}{2}$ \cite{YuanXu2004}, we just need to show that there are $\frac{(N+1)(N+2)}{2}$ elements in $\{R^m_n(r) e^{i m \phi}\}_{n=0, |m| \leq n}^N$, $(n-m)$ even. This will be shown by considering the cases for $N$ being odd and even.

When $N$ is odd, the number of radial polynomials is
    \begin{eqnarray*}
    N_{RO} = 2 + 4 + \cdots + (N+1) = 2(1 + 2 + \cdots + \frac{N+1}{2}) = \frac{(N+1)(N+3)}{4} .
    \end{eqnarray*}
The number of polynomials in $\{R^m_n(r) \cos m \phi, R^m_n(r) \sin m \phi \}_{n=0}^N,$ $0 \leq m \leq n,$ $(n-m)$ even, will then be given by $2 N_{RO}$ minus the number of radial polynomials for $m = 0$ i.e., the number of polynomials is
    $$
    2N_{RO} - \frac{N+1}{2} = \frac{(N+1)(N+2)}{2}
    $$
as needed.

When $N$ is even, the number of radial polynomials is
    \begin{eqnarray*}
    N_{RE} &=& 1 + 3 + 5 + \cdots + (N+1) = (1 + 2 + 3 + \cdots + N+1) - (2 + 4 + \cdots + N) \\
    &=& \frac{(N+1)(N+2)}{2} - 2(1 + 2 + \cdots + \frac N2) 
    = \frac{(N+1)(N+2)}{2} - \frac{N(N+2)}{4} = \left( \frac{N+2}{2} \right)^2.
    \end{eqnarray*}
In this case, the number of polynomials in $\{R^m_n(r) \cos m \phi, R^m_n(r) \sin m \phi \}_{n=0}^N,$ $0 \leq m \leq n,$ $(n-m)$ even, will be given by
    $$
    2 N_{RE} - \left( \frac N2 + 1 \right) = \frac{(N+1)(N+2)}{2}.
    $$
\end{proof}
%
%
%\begin{lemma}\label{LEM:BasisKernelPoly}
%For fixed $\rho$ and $\theta,$ $\{K_n(r, \phi; \rho, \theta)\}_{n=0}^N$ is a basis of $\Pi_N.$
%\end{lemma}
%

\noindent
A direct consequence of Lemma~\ref{LEM:CirclePolyONB} is the following. 
\begin{corollary} \label{COR:ONB_PI_N}
The set $\{Z^m_n\}_{\substack{n = 0,|m| \leq n \\ (n - m) \textrm{even}}}^N$ is an orthonormal basis for $V_N.$
\end{corollary}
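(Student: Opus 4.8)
The plan is to read the result straight off Lemma~\ref{LEM:CirclePolyONB} together with the definition (\ref{EQ:ZCP}) of the Zernike polynomials. By that definition, each $Z^m_n$ is the nonzero scalar multiple $\gamma^m_n R^{|m|}_n(r) e^{im\phi}$ of the corresponding element of the orthogonal basis furnished by Lemma~\ref{LEM:CirclePolyONB}, where $\gamma^m_n = \sqrt{(n+1)/\pi} \neq 0$. Rescaling the members of a basis by nonzero constants again yields a basis and preserves pairwise orthogonality, so $\{Z^m_n\}_{\substack{n=0,\,|m|\le n \\ (n-m)\ \textrm{even}}}^{N}$ is at once an orthogonal basis for $V_N$. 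In particular it consists of $\frac{(N+1)(N+2)}{2}$ mutually orthogonal, hence linearly independent, elements, matching $\dim V_N$.

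It then remains only to check that each $Z^m_n$ is a unit vector, i.e. that the constants $\gamma^m_n$ have indeed been chosen so as to normalize. I would do this in one of two equivalent ways. The quickest is to invoke the fact, recalled in the preliminaries, that the full collection $\{Z^m_n\}_{\substack{n=0,\,|m|\le n \\ (n-m)\ \textrm{even}}}^{\infty}$ is a complete orthonormal set for $L^2(B(0,1))$; every member of an orthonormal set has norm one, so the same holds for the finite subfamily indexed by $n \le N$, and an orthogonal basis of unit vectors is by definition orthonormal.

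Alternatively, and more self-containedly, I would verify the normalization by a direct computation using the inner product (\ref{EQ:InnerProduct}). Since the angular factor contributes $\int_0^{2\pi} |e^{im\phi}|^2\, d\phi = 2\pi$, one has $\|Z^m_n\|^2 = (\gamma^m_n)^2 \cdot 2\pi \int_0^1 \left(R^{|m|}_n(r)\right)^2 r\, dr$. Using the classical value $\int_0^1 \left(R^{|m|}_n(r)\right)^2 r\, dr = \frac{1}{2(n+1)}$ for the radial polynomials (\ref{EQ:RadialPolySeries}), this becomes $\frac{n+1}{\pi}\cdot 2\pi \cdot \frac{1}{2(n+1)} = 1$, as required. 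I do not expect any genuine obstacle: all the content lives in Lemma~\ref{LEM:CirclePolyONB}, and the corollary is the routine observation that normalizing an orthogonal basis produces an orthonormal one. The only point deserving a word is the value of the radial integral, which is standard for Zernike radial polynomials and is precisely what fixes the choice of $\gamma^m_n$.
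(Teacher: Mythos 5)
Your proposal is correct and matches the paper's approach: the paper offers no written proof, stating only that the corollary is ``a direct consequence of Lemma~\ref{LEM:CirclePolyONB},'' and your argument simply spells out that implicit step --- rescaling the orthogonal basis of the lemma by the nonzero constants $\gamma^m_n$ and checking normalization (either via the known orthonormality of the full Zernike family or via the radial integral $\int_0^1 (R^{|m|}_n(r))^2\, r\, dr = \tfrac{1}{2(n+1)}$). Both routes you give for the normalization are sound, so nothing is missing.
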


\noindent
Fix $\rho \in [0,1)$, and $\theta \in [0, 2\pi).$ The following Lemma~\ref{LEM:OptimizationProblemSolution} indicates that the kernel polynomials defined in (\ref{EQ:KernelPolynomials}) are localized around the point $(\rho, \theta).$ %\textcolor{blue}{See \cite{Szego1975} for the general theory of kernel polynomials.}
\begin{lemma}\label{LEM:OptimizationProblemSolution}
For given $\rho,$ $\theta,$ consider the following optimization problem
$$
\min \{ \| p \| \ : p \in V_N, \ p(\rho, \theta) = 1 \} .
$$
The solution to this is given by $\left\|\frac{K_N(r, \phi; \rho, \theta)}{K_N(\rho, \theta; \rho, \theta)} \right\|$ where $K_N(r, \phi; \rho, \theta)$ is the kernel polynomial defined in (\ref{EQ:KernelPolynomials}).
\end{lemma}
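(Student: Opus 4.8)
The plan is to convert the pointwise interpolation constraint into a single linear functional constraint in the Hilbert space $L^2(B(0,1))$ and then recognize the problem as a minimum-norm problem that is solved cleanly by Cauchy--Schwarz. Writing $K_N := K_N(r,\phi;\rho,\theta)$ for the kernel polynomial of (\ref{EQ:KernelPolynomials}) viewed as an element of $V_N$ (it is a linear combination of the $Z_n^m$ of degree at most $N$), the reproducing property (\ref{EQ:RepPropKernelPoly}) shows that for any $p \in V_N$ the condition $p(\rho,\theta)=1$ is \emph{equivalent} to $\langle p, K_N\rangle = 1$. Hence the feasible set is exactly the affine set $\{p \in V_N : \langle p, K_N\rangle = 1\}$, which is nonempty since the constant function $p \equiv 1$ lies in $V_0 \subseteq V_N$ and satisfies the constraint.

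Next I would apply the Cauchy--Schwarz inequality directly to the rewritten constraint: for any feasible $p$,
$$ 1 = |\langle p, K_N\rangle| \le \|p\|\,\|K_N\|, $$
so that $\|p\| \ge 1/\|K_N\|$. Equality in Cauchy--Schwarz holds precisely when $p$ is a scalar multiple of $K_N$; imposing $\langle \lambda K_N, K_N\rangle = 1$ then forces $\lambda = 1/\|K_N\|^2$. Thus the unique minimizer is $p^\ast = K_N/\|K_N\|^2$, and the minimum value of the objective is $\|p^\ast\| = 1/\|K_N\|$.

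Finally, I would put this into the stated form by computing $\|K_N\|^2$. Applying the reproducing property (\ref{EQ:RepPropKernelPoly}) a second time, now with the admissible choice $p = K_N \in V_N$, gives $\|K_N\|^2 = \langle K_N, K_N\rangle = K_N(\rho,\theta;\rho,\theta)$. Substituting this identity rewrites the minimizer as $p^\ast = K_N(r,\phi;\rho,\theta)/K_N(\rho,\theta;\rho,\theta)$, whose norm is $\|K_N\|/\|K_N\|^2 = 1/\|K_N\|$; this is exactly the claimed optimal value $\left\|K_N(r,\phi;\rho,\theta)/K_N(\rho,\theta;\rho,\theta)\right\|$.

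The one point requiring care, which I expect to be the only real obstacle, is verifying that $K_N$ is a nonzero element of $V_N$, i.e. that $K_N(\rho,\theta;\rho,\theta) = \|K_N\|^2 > 0$, so that the division is legitimate and the Cauchy--Schwarz bound is both finite and attained. This is immediate from the definition (\ref{EQ:KernelPolynomials}): every summand $|Z_n^m(\rho,\theta)|^2$ is nonnegative, and the $n=0$ term contributes $|Z_0^0|^2 = 1/\pi > 0$, so $K_N(\rho,\theta;\rho,\theta) \ge 1/\pi > 0$ for every $(\rho,\theta)$. I would also flag the mild abuse of language in the statement: the displayed quantity is the optimal \emph{value} of the objective, which is attained at the normalized kernel $p^\ast$, and the Cauchy--Schwarz equality condition shows this minimizer is unique.
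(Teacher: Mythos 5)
Your proof is correct and takes essentially the same route as the paper: both arguments rest on the Cauchy--Schwarz inequality together with the identity $\|K_N\|^2 = K_N(\rho,\theta;\rho,\theta)$, the only difference being that the paper phrases Cauchy--Schwarz in coordinates (on the Zernike coefficients of $p$ and the values $Z_n^m(\rho,\theta)$) while you phrase it abstractly via the constraint $\langle p, K_N\rangle = 1$. Your write-up is in fact slightly more complete, since it also records attainment, uniqueness of the minimizer, and the positivity of $K_N(\rho,\theta;\rho,\theta)$, all of which the paper leaves implicit.
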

\begin{proof}
Let $p(r, \phi) \in V_N$ with $p(\rho, \theta) = 1.$ Then
    $$
    p(r, \phi) = \sum_{n = 0}^N \sum_{\substack{m=-n \\ (m - n) \ \textrm{even}}}^n c_{nm} Z^m_n(r, \phi) .
    $$
Thus
    $$
    \| p \|^2 = \langle p, p \rangle = \sum_{n = 0}^N \sum_{\substack{m=-n \\ (m - n) \ \textrm{even}}}^n |c_{nm}|^2 . 
    $$
Using Cauchy-Schwarz we get the following:
    \begin{eqnarray}
    1
    &=&
    | p(\rho, \theta) |^2
    \leq
    \left( \sum_{n=0}^N \sum_{\substack{m=-n \\ (m - n) \ \textrm{even}}}^n |c_{nm}|^2 \right) \left( \sum_{n = 0}^N \sum_{\substack{m=-n \\ (m - n) \ \textrm{even}}}^n |Z^m_n(\rho, \theta)|^2 \right)\nonumber \\
    \textrm{or,} \ 1
    &\leq&
    \| p \|^2 \sum_{n=0}^N \sum_{\substack{m=-n \\ (m - n) \ \textrm{even}}}^n (\gamma_n^m)^2 (R^{|m|}_n(\rho))^2 . \label{EQ:KernelPolyNormBound}
    %&=& \| p \|^2 \sum_{n=0}^N \sum_{\substack{m=-n \\ (m - n) \ \textrm{even}}}^n (R^{|m|}_n(\rho))^2
    \end{eqnarray}
By the orthonormality of Zernike polynomials we get 
\begin{align} \label{EQ:ZCPNorm}
& \|K_N(r,\varphi;\rho,\theta)\|^2  =\langle K_N(r,\varphi;\rho,\theta),K_N(r,\varphi;\rho,\theta) \rangle \nonumber \\
& =\left \langle \sum_{n=0}^N \sum_{\substack{m=-n,\\n-m \,\text{even}}}^{m=n}   Z_n^m(r,\varphi) \,  \overline{Z_n^m(\rho,\theta)} , \sum_{n=0}^N \sum_{\substack{m=-n,\\n-m \,\text{even}}}^{m=n}   Z_n^m(r,\varphi) \,  \overline{Z_n^m(\rho,\theta)}\right\rangle \nonumber \\
& = \sum_{n=0}^N \sum_{\substack{m=-n,\\n-m \,\text{even}}}^{m=n} (\gamma_n^m)^2 (R_n^{|m|}(\rho))^2,
\end{align} 
and from (\ref{EQ:KernelPolynomials}) we have 
\begin{equation} \label{EQ:ZCPNorm2}
K_N(\rho, \theta; \rho, \theta) %=  \sum_{n=0}^N \sum_{\substack{m=-n \\ (m - n) \ \textrm{even}}}^n  \overline{Z_n^m(\rho,\theta)} Z_n^m(\rho,\theta) 
= \sum_{n=0}^N \sum_{\substack{m=-n \\ (m - n) \ \textrm{even}}}^n  | Z_n^m(\rho,\theta |^2 =\sum_{n=0}^N \sum_{\substack{m=-n,\\n-m \,\text{even}}}^{m=n} (\gamma_n^m)^2 (R_n^{|m|}(\rho))^2.
\end{equation}
Combining (\ref{EQ:ZCPNorm}), (\ref{EQ:ZCPNorm2}), and (\ref{EQ:KernelPolyNormBound}), we obtain,
    \begin{eqnarray*}
    \left\| \frac{K_N(r, \phi; \rho, \theta)}{K_N(\rho, \theta; \rho, \theta)} \right\|^2
    &=&
    \frac{\| K_N(r, \phi; \rho, \theta) \|^2}{\left(\sum_{n=0}^N \sum_{\substack{m=-n \\ (m - n) \ \textrm{even}}}^n (\gamma^m_n)^2 ({R^{|m|}_n}(\rho))^2 \right)^2}
    = \frac{1}{\sum_{n=0}^N \sum_{\substack{m=-n \\ (m - n) \ \textrm{even}}}^n  (\gamma^m_n)^2 ({R^m_n}(\rho))^2 }
    \leq \| p \|^2 .
    \end{eqnarray*}
\end{proof}
%
%
%In the context of wavelets, Lemma~\ref{LEM:BasisKernelPoly} is not too useful since we need to consider translates.
%From \cite{Nijboer1942}, we know that $R_{n}^m(r)$ has $m$ zeros at the origin, $\frac{n-m}{2}$ zeros in $(0,1)$ and $\frac{n-m}{2}$ zeros in $(-1,0)$. Also, all zeros of $R_n^m(r)$ are real. Let $\{x_{\ell}^{(n)}\}_{\ell=1}^n$ be the zeros of $R^m_n(r).$
%

\begin{lemma}\label{LEM:Christoffel-Darboux}
The kernel polynomial $K_N(r,\phi; \rho,\theta)$ satisfies the \textit{Christoffel-Darboux Formula} for Zernike polynomials of radial degree $N$:
%\begin{alignat*}{3}
\begin{align*}
 K_N(r,\phi;\rho,\theta) &=
\sum_{m=-N}^{m=N}b_N^m \frac{Z_{N+2}^m(r,\phi) \overline{Z_{N}^m(\rho,\theta)} - 
Z_{N}^m(r,\phi)\overline{Z_{N+2}^m(\rho,\theta)}} {(r^2-\rho^2)} \\
& \qquad + \sum_{m=-(N-1)}^{m=N-1}b_{N-1}^m \frac{Z_{N+1}^m(r,\phi) \overline{Z_{N-1}^m(\rho,\theta)} - 
Z_{N-1}^m(r,\phi)\overline{Z_{N+1}^m(\rho,\theta)}} {(r^2-\rho^2)}, 
\end{align*}
%\end{alignat*}
where
\begin{alignat*}{3}
b_N^m & =\frac{(N+2)^2-m^2}{4(N+2)}\frac{1}{\sqrt{(N+1)(N+3)}} . 
\end{alignat*}
\end{lemma}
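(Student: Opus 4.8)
The plan is to reduce the two-dimensional identity to a family of classical one-dimensional Christoffel--Darboux formulas, one for each angular frequency $m$, and then reassemble. First I would exploit the product structure $Z_n^m(r,\phi) = \gamma_n^m R_n^{|m|}(r) e^{im\phi}$ to split the kernel $K_N$ by angular frequency: since $Z_n^m(r,\phi)\overline{Z_n^m(\rho,\theta)} = (\gamma_n^m)^2 R_n^{|m|}(r) R_n^{|m|}(\rho) e^{im(\phi-\theta)}$, the double sum in (\ref{EQ:KernelPolynomials}) becomes $\sum_m e^{im(\phi-\theta)} S_m(r,\rho)$, where for each fixed $m$ the inner sum $S_m(r,\rho) = \sum_n (\gamma_n^m)^2 R_n^{|m|}(r) R_n^{|m|}(\rho)$ runs over $|m|\le n\le N$ with $n\equiv m\pmod 2$. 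Because the admissible $n$ for a fixed $m$ step by two, this inner sum is exactly a partial sum of products of a single sequence of orthogonal polynomials, namely $\{\gamma_n^m R_n^{|m|}\}_n$ viewed as polynomials in $r^2$.

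Second, I would establish the three-term recurrence that drives the telescoping. Writing $\zeta_n^m(r) := \gamma_n^m R_n^{|m|}(r)$, I would use that multiplication by the real multiplier $r^2$ is self-adjoint on $L^2(B(0,1))$ and, by degree considerations, acts tridiagonally on the orthonormal system $\{Z_n^m\}_{n\ge|m|}$ of fixed angular frequency $m$; hence
\begin{equation*}
r^2 \zeta_n^m(r) = a_n^m \zeta_{n+2}^m(r) + c_n^m \zeta_n^m(r) + a_{n-2}^m \zeta_{n-2}^m(r),
\end{equation*}
with the \emph{same} off-diagonal entry $a_{n-2}^m$ appearing in the lower slot by symmetry. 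Here $a_n^m = \langle r^2 Z_n^m, Z_{n+2}^m\rangle$, and a direct computation from the explicit series (\ref{EQ:RadialPolySeries}) (or, equivalently, the Jacobi recurrence after the substitution $t = 1 - 2r^2$) gives the expansion coefficient of $R_{n+2}^{|m|}$ in $r^2 R_n^{|m|}$ as $\frac{(n+2)^2-m^2}{4(n+1)(n+2)}$; combined with $\gamma_n^m/\gamma_{n+2}^m = \sqrt{(n+1)/(n+3)}$ this yields exactly $a_n^m = \frac{(n+2)^2-m^2}{4(n+2)}\,\frac{1}{\sqrt{(n+1)(n+3)}} = b_n^m$.

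Third, with the recurrence in hand the telescoping is routine: forming $(r^2-\rho^2)\zeta_n^m(r)\zeta_n^m(\rho)$ and substituting the recurrence in each factor produces the difference $D_n - D_{n-2}$, where $D_n := a_n^m\big(\zeta_{n+2}^m(r)\zeta_n^m(\rho) - \zeta_n^m(r)\zeta_{n+2}^m(\rho)\big)$. Summing over the admissible $n$ telescopes to the top term $D_{n_{\max}}$, since the bottom term $D_{|m|-2}$ vanishes (there is no $\zeta_{|m|-2}^m$, equivalently $a_{|m|-2}^m = 0$). The value of $n_{\max}$ depends on the parity of $m$: it is $N$ when $m\equiv N\pmod 2$ and $N-1$ otherwise. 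Restoring the factor $e^{im(\phi-\theta)}$ recombines $\zeta_{n_{\max}+2}^m(r)\zeta_{n_{\max}}^m(\rho)e^{im(\phi-\theta)}$ into $Z_{n_{\max}+2}^m(r,\phi)\overline{Z_{n_{\max}}^m(\rho,\theta)}$, and summing over $m$ and sorting by parity gives precisely the two displayed sums: the $m\equiv N$ terms produce the first sum (with $b_N^m = a_N^m$ and indices $N+2,N$), while the $m\not\equiv N$ terms produce the second sum (with $b_{N-1}^m = a_{N-1}^m$ and indices $N+1,N-1$).

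I expect the only real obstacle to be the second step --- pinning down the off-diagonal recurrence coefficient and matching it to the clean closed form for $b_N^m$. Everything else (the angular separation and the telescoping) is formal, but the constant $\frac{(n+2)^2-m^2}{4(n+1)(n+2)}$ must be extracted carefully from either the hypergeometric/Jacobi description of $R_n^{|m|}$ or from a direct manipulation of the series (\ref{EQ:RadialPolySeries}), keeping close track of the normalizations $\gamma_n^m = \sqrt{(n+1)/\pi}$ so that the symmetric square-root factor $1/\sqrt{(N+1)(N+3)}$ emerges correctly.
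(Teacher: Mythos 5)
Your proposal is correct and follows essentially the same route as the paper: both rest on the three-term recurrence $r^2 Z_n^m = b_{n-2}^m Z_{n-2}^m + a_n Z_n^m + b_n^m Z_{n+2}^m$ at fixed angular order $m$, followed by telescoping $(r^2-\rho^2)Z_n^m(r,\phi)\overline{Z_n^m(\rho,\theta)}$ down the admissible radial degrees and sorting the angular orders by parity into the two displayed sums. The only difference is that the paper imports the recurrence and its coefficients from the Kinter reference, whereas you derive it from self-adjointness of multiplication by $r^2$ plus a leading-coefficient computation, and you treat the vanishing of the bottom boundary term at $n=|m|$ somewhat more carefully than the paper's appeal to $Z_{-2}^m=0$.
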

\begin{proof}
To establish this, we use
the following three-term recurrence relation for a set of Zernike polynomials $\{Z_n^m \}$ from \cite{kinter76}:
\begin{alignat}{3}
r^2 Z_n^m(r,\phi) &= b_{n-2} Z_{n-2}^m(r,\phi)+ a_n Z_n^m(r,\phi)+b_{n}Z_{n+2}^m(r,\phi),
\label{eq:CDthree_term7a}
\end{alignat}
where
\begin{alignat*}{3}
a_n & := \left[\frac{(n+m)^2}{n}+\frac{(n-m+2)^2}{n+2} \right]\frac{1}{4(n+1)} . 
\end{alignat*}
Substituting (\ref{eq:CDthree_term7a}) in 
\begin{equation} \label{eq:CDF1}
Z_{n+2}^m(r,\phi) \overline{Z_{n}^m(\rho,\theta)}- Z_{n}^m(r,\phi)\overline{Z_{n+2}^m(\rho,\theta)} ,
\end{equation}
and simplifying with the assumption that n and m are even, it follows that
%\begin{alignat*}{3}
\begin{align}
 b_n^m \frac{Z_{n+2}^m(r,\phi) \overline{Z_{n}^m(\rho,\theta)} - 
Z_{n}^m(r,\phi)\overline{Z_{n+2}^m(\rho,\theta)}}
{(r^2-\rho^2)} 
&= Z_n^m(r,\phi)\overline{Z_{n}^m(\rho,\theta)} 
 + b_{n-2}^m
\frac{Z_{n}^m(r,\phi)\overline{Z_{n-2}^m(\rho,\theta)}-Z_{n-2}^m(r,\phi) \overline{Z_{n}^m(\rho,\theta)}}{(r^2-\rho^2)} \label{eq:CDF2}  \\
& = Z_n^m(r,\phi)\overline{Z_{n}^m(\rho,\theta)} + 
Z_{n-2}^m(r,\phi)\overline{Z_{n-2}^m(\rho,\theta)}+ \cdots+ 
Z_0^m(r,\phi)\overline{Z_{0}^m(\rho,\theta)} +  \nonumber \\
 & \qquad \qquad \qquad \qquad \qquad + b_{-2}^m \frac{Z_{0}^m(r,\phi)\overline{Z_{-2}^m(\rho,\theta)}-
Z_{-2}^m(r,\phi) \overline{Z_{0}^m(\rho,\theta)}}{(r^2-\rho^2)}  \nonumber \\
& = \sum_{k=0}^{n/2} Z_{n-2k}^m(r,\phi) \overline{Z_{n-2k}^m(\rho,\theta)} \label{eq:CDF3}
\end{align}
%\end{alignat*}
since $Z_{-2}^m(\cdot,\cdot)=0$, $|m| \le n$ and $n-|m|$ is even.

Again taking $n$ even, with $n-1$ and $\mu$, an odd number, replacing $n$ and $m$, respectively, in (\ref{eq:CDF1}), and proceeding in the same way as above, we can write, 
\begin{alignat}{3}
& b_{n-1}^{\mu} \frac{Z_{n+1}^{\mu}(r,\phi) \overline{Z_{n-1}^{\mu}(\rho,\theta)} - 
Z_{n-1}^{\mu}(r,\phi)\overline{Z_{n+1}^{\mu}(\rho,\theta)}}
{(r^2-\rho^2)} 
 = Z_{n-1}^{\mu}(r,\phi)\overline{Z_{n-1}^{\mu}(\rho,\theta)}  \label{eq:CDF4} \\
& \qquad \qquad \qquad \qquad \qquad + b_{n-3}^{\mu}
\frac{Z_{n-1}^{\mu}(r,\phi)\overline{Z_{n-3}^{\mu}(\rho,\theta)}-Z_{n-3}^{\mu}(r,\phi) \overline{Z_{n-1}^{\mu}(\rho,\theta)}}{(r^2-\rho^2)}  \notag \\
& = Z_{n-1}^{\mu}(r,\phi)\overline{Z_{n-1}^{\mu}(\rho,\theta)} + 
Z_{n-3}^{\mu}(r,\phi)\overline{Z_{n-3}^{\mu}(\rho,\theta)}+ \cdots+ 
Z_0^{\mu}(r,\phi)\overline{Z_{0}^{\mu}(\rho,\theta)} +  \notag  \\
 &  \qquad \qquad \qquad \qquad \qquad + b_{-2}^{\mu} \frac{Z_{0}^{\mu}(r,\phi)\overline{Z_{-2}^{\mu}(\rho,\theta)}-
Z_{-2}^{\mu}(r,\phi) \overline{Z_{0}^{\mu}(\rho,\theta)}}{(r^2-\rho^2)} \notag  \\
& = \sum_{k=0}^{(n-1-{\mu})/2} Z_{n-1-2k}^{\mu}(r,\phi) \overline{Z_{n-1-2k}^{\mu}(\rho,\theta)} \label{eq:CDF5}
\end{alignat}
since $Z_{-2}^\mu (\cdot,\cdot)=0$, $|\mu| \le n$ and $n-1-|\mu|$ is even. For $n$ odd, a similar calculation can be done. 

Taking the sum of the series (\ref{eq:CDF3}) for $m$ from $-N$ to $N$, and that of (\ref{eq:CDF5}) for $\mu$ from $-(N-1)$ to $N-1$, we get  the kernel polynomial $K_N(r,\phi;\rho,\theta),$ and the corresponding sum of (\ref{eq:CDF2}) and (\ref{eq:CDF4}) gives the right side of the desired Christoffel-Darboux Formula for Zernike polynomials for even radial degree $N$ . For odd radial degree $N,$ things work out in a similar fashion. 
%It is easy to interpret that in the formula established, $N$ can take either odd or even positive integers.

%For $n$ odd, the same series can be used to represent the left-hand side expression. Adding these two series and then taking the sum for $m$ from $-N$ to $N$, we get 
%the required Christoffel-Darboux Formula. % for Zernike polynomials of radial degree $N$.
%
\end{proof}

\noindent
Lemma~\ref{LEM:OptimizationProblemSolution} motivates the following definition of scaling functions.

\begin{definition}\label{DEF:SpecialPoints}
(a)
Given $N,$ set $k = \lfloor \frac N2 \rfloor +1 .$
Choose radii $\{ \lambda_i \}_{i = 1}^k$ according to formula (7) of Section 2.3 in \cite{Lopez2016} where
    $$
    1 > \lambda_1 > \lambda_2 > \cdots > \lambda_k \geq 0,
    $$
and on the $i$th circle ($1\leq i \leq k$) choose
    $$
    n_i = 2N + 5 - 4i
    $$
equally spaced nodes. With this choice, there are $J = \frac{(N+1)(N+2)}{2}$ points within the unit circle. Such a set of points in the unit circle will be called \textit{regular points}.

\noindent
(b)
Let $\{(\rho_j^{(N)}, \theta_j^{(N)})\}_{j = 1}^J$ be a set of regular points. The scaling functions are defined as
$$
\phi_{N, j}(r, \phi) := K_N(r, \phi; \rho^{(N)}_{j}, \theta_j^{(N)}), \quad j = 1, \ldots, J.
$$
\end{definition}

%
%\begin{definition}\label{DEF:SpecialPoints}
%Given $N$ set $k = \lfloor \frac N2 \rfloor +1 .$
%Choose radii $\{ \lambda_i \}_{i = 1}^k$ according to \cite{Lopez2016} where
%    $$
%    1 \geq \lambda_1 > \lambda_2 > \cdots > \lambda_k \geq 0
%    $$
%and on the $i$th circle ($1\leq i \leq k$) choose
%    $$
%    n_i = 2N + 5 - 4i
%    $$
%equally spaced nodes. With this choice there are $J = \frac{(N+1)(N+2)}{2}$ points within the unit circle. Such a set of points in the unit circle will be called \textit{special points}.
%\end{definition}

\noindent
The basis property of the scaling functions is shown in the following Theorem~\ref{THM:BasisTranslatesKernelPoly}. The dual of this basis, which is needed for reconstruction, is given in part (iv) of Theorem~\ref{THM:PropertiesScalingFunction}.

\begin{theorem}\label{THM:BasisTranslatesKernelPoly}
For a given $N,$ let $\{P_j^{(N)} = (\rho_j^{(N)}, \theta_j^{(N)})\}_{j = 1}^J$ be a set of regular points as described in Definition~\ref{DEF:SpecialPoints}. The set
    $$
    \{K_N(r, \phi; \rho_j^{(N)}, \theta_j^{(N)})\}_{j=1}^J
    $$
is a basis of $V_N.$  Consequently, the set of scaling functions $\{ \phi_{N, j}(r, \phi) \}_{j=1}^J $ is a basis of $V_{N}$.

%For the set of parameters $\{ x^{(N)}_{\ell_1} \}_{\ell_1=1}^{n_1},$ $\{ \theta_{\ell_2}^{(N)}\}_{\ell_2=1}^{n_2}$ where $n_1 + n_2 = \frac{(N+1)(N+2)}{2},$
%    $$
%    \{K_N(r, \phi ; x^{(N)}_{\ell_1}, \theta^{(N)}_{\ell_2}) : \ell_1 = 1, \ldots, n_1 , \ \ell_2 = 1, \ldots, n_2\}
%    $$
%is a basis for $\Pi_N.$
\end{theorem}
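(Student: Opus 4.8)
The plan is to exploit that the number of kernel polynomials, $J = \frac{(N+1)(N+2)}{2}$, equals $\dim V_N$ (Corollary~\ref{COR:ONB_PI_N}), so that it suffices to prove the $J$ kernel polynomials are linearly independent; being the correct number of independent vectors in $V_N$, they will then automatically span $V_N$ and form a basis. Each $K_N(\cdot,\cdot; P_j^{(N)})$ lies in $V_N$ by its definition in (\ref{EQ:KernelPolynomials}), so the entire argument takes place inside the finite-dimensional inner product space $V_N$.

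First I would translate linear independence into a statement about point evaluations using the reproducing property (\ref{EQ:RepPropKernelPoly}). Suppose $\sum_{j=1}^J c_j K_N(\cdot,\cdot; P_j^{(N)}) = 0$ in $V_N$. Pairing this identity against an arbitrary $p \in V_N$ and using that $\langle p, K_N(\cdot,\cdot; \rho_j^{(N)}, \theta_j^{(N)}) \rangle = p(\rho_j^{(N)}, \theta_j^{(N)})$, one obtains
\begin{equation*}
\sum_{j=1}^J \overline{c_j}\, p(P_j^{(N)}) = 0 \qquad \text{for every } p \in V_N.
\end{equation*}
In other words, the coefficient vector $(\overline{c_1}, \ldots, \overline{c_J})$ annihilates every evaluation vector $\big(p(P_1^{(N)}), \ldots, p(P_J^{(N)})\big)$ as $p$ ranges over $V_N$.

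The crux is then to show that these evaluation vectors exhaust all of $\mathbb{C}^J$, equivalently that the evaluation map $V_N \to \mathbb{C}^J$, $p \mapsto (p(P_j^{(N)}))_{j=1}^J$, is onto (hence, by equality of dimensions, a bijection). This is exactly the assertion that the regular points of Definition~\ref{DEF:SpecialPoints} form a \emph{unisolvent} (poised) set for polynomial interpolation from $V_N$, which is precisely the property for which the nodes of \cite{Lopez2016} are engineered: the radii $\{\lambda_i\}$ and the node counts $n_i = 2N+5-4i$ are selected so that interpolation of degree at most $N$ on these nodes admits a unique solution. Granting this, the only vector annihilating the full image $\mathbb{C}^J$ is the zero vector, so $c_j = 0$ for all $j$, which proves linear independence and hence the basis property. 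The final claim about the scaling functions is then immediate, since $\phi_{N,j} = K_N(\cdot,\cdot; P_j^{(N)})$ by Definition~\ref{DEF:SpecialPoints}(b). I expect the main obstacle to be the unisolvence of the regular nodes; rather than reprove it, I would extract it as a cited input from \cite{Lopez2016}, so that the only verification required on our side is the bookkeeping that the node count $\sum_{i=1}^{k} n_i$ indeed totals $J$ for both parities of $N$.
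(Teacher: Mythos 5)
Your proposal is correct and follows essentially the same route as the paper: reduce to linear independence by the dimension count, convert a vanishing linear combination into the vanishing of $\sum_j \overline{c_j}\,p(P_j^{(N)})$ via the reproducing property, and invoke unisolvence of the regular points from \cite{Lopez2016}. The paper's only cosmetic difference is that it makes the surjectivity of the evaluation map concrete by pairing against the fundamental Lagrange polynomials $\ell_i$ (whose existence is exactly the unisolvence you cite), which immediately yields $\tau_i = 0$.
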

\begin{proof}
Let $\{Z_j\}_{j=0}^{J-1}$ denote the Zernike polynomials enumerated using formula (\ref{EQ:ZCPIndex}).
Following results in \cite{Lopez2016}, consider the collocation matrix
    $$
    A_N = [Z_{j-1}(P_i^{(N)})]_{i,j = 1}^J.
    $$
Let $A_N^{(i)}$ be obtained from $A_N$ by replacing the $i$th row with $\{Z_{j-1}(x,y)\}_{j = 1}^J$. Under the choice of points $\{P_j^{(N)} \}_{j = 1}^J,$ the Lagrange interpolating polynomial is uniquely determined \cite{Bos1981}, the $i$th fundamental Lagrange polynomial of degree $N$ is given by
    $$
    \ell_{i}(x, y) = \frac{1}{\textrm{det} A_N} \textrm{det}\left(A_N^{(i)}(x, y)\right),
    $$
and these are characterized by $\ell_i(P_j^{(N)}) = \delta_{ij}.$

Note that the size of the set $\{K_N(r,\phi;\rho_j^{(N)},\theta_j^{(N)} \}_{j=1}^J$ is the same as the dimension of $V_N$. Therefore, to show that this set is a basis of $V_N$, it is enough to show that it is linearly independent.
%To show that $ \{K_N(r, \phi; \rho_j^{(N)}, \theta_j^{(N)})\}_{j=1}^J $
%is a basis of $V_N,$ it is enough to show that this set is linearly independent. 
For convenience of notation, $ \{K_N(r, \phi; \rho_j^{(N)}, \theta_j^{(N)})\}_{j=1}^J $ will be written as $ \{K_N(r, \phi; P_j^{(N)})\}_{j=1}^J.$ Let

    \begin{equation} \label{EQ:KernelPolynomialsLI}
    \sum_{j =1}^J \tau_j K_N(r, \phi; P_j^{(N)}) = 0.
    \end{equation}

For a fixed $i$, take the inner product on both sides of (\ref{EQ:KernelPolynomialsLI}) with the $i$th fundamental Lagrange polynomial $\ell_{i}.$ The reproducing property of the kernel polynomial $K_N$ given in (\ref{EQ:RepPropKernelPoly}) then gives

    $$
    0 = \sum_{j =1}^J \tau_j \left\langle \ell_i, K_N(r, \phi; P_j^{(N)}) \right\rangle = \sum_{j =1}^J \tau_j \ell_i(P_j^{(N)}) = \tau_i.
    $$
Since this holds for each $i = 1, \ldots, J,$ the set is linearly independent.

\end{proof}

%Consider a set of special points $(\rho^{(N)}_{j}, \theta_j^{(N)})$ as described in Definition~\ref{DEF:SpecialPoints}. %Letting $x = r \cos \phi$ and $y = r \sin \phi$,
%Define the scaling functions as
%$$
%\phi_{N, j}(r, \phi) := K_N(r, \phi; \rho^{(N)}_{j}, \theta_j^{(N)}), \quad j = 1, \ldots, J.
%$$
%For convenience, a point $(\rho^{(N)}_{j}, \theta_j^{(N)})$ will be written as $(\rho_{j}, \theta_j)$ or just $P_j.$ Some properties of scaling functions are collected below. In particular, the set of scaling functions $ \{ \phi_{N, j}(r, \phi) \}_{j=1}^J $ is a basis of $V_{N}$ and the dual of this basis, which is needed for reconstruction, is given in part (iv) of Theorem~\ref{THM:PropertiesScalingFunction}.

\noindent
Following are some properties of scaling functions. For convenience, a point $(\rho^{(N)}_{j}, \theta_j^{(N)})$ will be written as $(\rho_{j}, \theta_j)$ or just $P_j.$

\begin{theorem}\label{THM:PropertiesScalingFunction}

\begin{enumerate}[(i)]
\item The inner product of the scaling functions may be evaluated as
    $$
    \langle \phi_{N,i}, \phi_{N,j}\rangle = \phi_{N,i}(P_j)
    $$

\item The scaling function $\phi_{N,j}$ is localized around $P_j.$ More precisely,
    $$
    \left\|\frac{\phi_{N,j}(r, \phi)}{\phi_{N,j}(P_j)} \right\| = \min \{ \| p \| \ : p \in V_N, \ p(P_j) = 1 \} .
    $$

\item The set of scaling functions $\{\phi_{N,j}\}_{j=1}^J$ is a basis of $V_N.$

\item The dual of the scaling functions $\{\phi_{N,j}\}_{j=1}^J$ is given by the fundamental Lagrange interpolating polynomials $\{\ell_{j}\}_{j=1}^J$ with respect to the points $\{P_j\}_{j=1}^J$ given above.

\item The scaling function $\phi_{N, j}$ is orthogonal to $V_{N-1}$ with respect to the modified inner product \\
$\langle \  \cdot \ , \ \cdot(\cdot - P_j)\rangle $ i.e.,
    $$
    \left \langle \phi_{N, j}(\cdot ,\cdot) \ , \ q(\cdot , \cdot)(\cdot - P_j)\right \rangle = 0 \quad \textrm{for all} \ q \in V_{N-1}.
    $$

\item The scaling function $\phi_{N,j}(r,\phi)$ satisfies the Christoffel-Darboux formula:
\begin{alignat*}{3}
&\phi_{N,j}(r,\phi) 
 = \sum_{m=-N}^{m=N}b_N^m \frac{Z_{N+2}^m(r,\phi) \overline{Z_{N}^m(P_j)} - 
Z_{N}^m(r,\phi)\overline{Z_{N+2}^m(P_j)}} {(r^2-\rho_j^2)} \\
& \qquad \qquad \qquad + \sum_{m=-(N-1)}^{m=N-1}b_{N-1}^m \frac{Z_{N+1}^m(r,\phi) \overline{Z_{N-1}^m(P_j)} - 
Z_{N-1}^m(r,\phi)\overline{Z_{N+1}^m(P_j)}} {(r^2-\rho_j^2)} .
\end{alignat*}

\end{enumerate}

\end{theorem}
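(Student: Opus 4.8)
The plan is to obtain all six parts from the reproducing identity (\ref{EQ:RepPropKernelPoly}) together with the three results already in hand: Lemma~\ref{LEM:OptimizationProblemSolution}, Theorem~\ref{THM:BasisTranslatesKernelPoly}, and Lemma~\ref{LEM:Christoffel-Darboux}. The unifying observation is that $\phi_{N,j}=K_N(\cdot,\cdot;P_j)$ belongs to $V_N$ as a function of $(r,\phi)$, so that (\ref{EQ:RepPropKernelPoly}) applies whenever the other factor in an inner product also lies in $V_N$.

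I would first dispose of the parts that are immediate. For (i), since $\phi_{N,i}\in V_N$, identity (\ref{EQ:RepPropKernelPoly}) gives $\langle \phi_{N,i},\phi_{N,j}\rangle=\langle \phi_{N,i},K_N(\cdot,\cdot;P_j)\rangle=\phi_{N,i}(P_j)$. Part (ii) is Lemma~\ref{LEM:OptimizationProblemSolution} read at $(\rho,\theta)=P_j$, using $\phi_{N,j}(P_j)=K_N(P_j;P_j)$; part (iii) is precisely Theorem~\ref{THM:BasisTranslatesKernelPoly}; and part (vi) is Lemma~\ref{LEM:Christoffel-Darboux} with $(\rho,\theta)=P_j$. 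For (iv), I would check the biorthogonality relation $\langle\phi_{N,i},\ell_j\rangle=\delta_{ij}$: each $\ell_j$ has degree $N$ and so lies in $V_N$, whence (\ref{EQ:RepPropKernelPoly}) yields $\langle\ell_j,K_N(\cdot,\cdot;P_i)\rangle=\ell_j(P_i)=\delta_{ji}$; conjugating and using that $\ell_j(P_i)$ is real gives $\langle\phi_{N,i},\ell_j\rangle=\delta_{ij}$. Since $\{\phi_{N,j}\}_{j=1}^J$ and $\{\ell_j\}_{j=1}^J$ are both bases of the $J$-dimensional space $V_N$, biorthogonality forces $\{\ell_j\}$ to be the dual basis.

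The only part that calls for interpreting notation is (v). Here $q(\cdot,\cdot)(\cdot-P_j)$ is the product of $q$ with the degree-one factor that vanishes at $P_j$; writing $z=re^{i\phi}$ and $\zeta_j=\rho_je^{i\theta_j}$ this factor is $z-\zeta_j$. The point to verify is a degree count: multiplying $q\in V_{N-1}$ by the linear factor raises its degree by one, so $q\cdot(z-\zeta_j)$ has degree at most $N$ and hence lies in $V_N$, and this product vanishes at $P_j$ because the linear factor does. Applying (\ref{EQ:RepPropKernelPoly}) to $q\cdot(z-\zeta_j)$ and conjugating then gives
\[
\langle \phi_{N,j},\,q\cdot(\cdot-P_j)\rangle=\overline{\bigl[q\cdot(\cdot-P_j)\bigr](P_j)}=0
\]
for every $q\in V_{N-1}$.

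I do not expect a genuine obstacle: every assertion is a one-line consequence of the reproducing property once the correct factor is seen to lie in $V_N$. The only places demanding care are the bookkeeping of complex conjugates arising from the Hermitian inner product (\ref{EQ:InnerProduct}) in parts (i) and (iv), and the degree count in part (v) that keeps $q\cdot(\cdot-P_j)$ inside $V_N$ rather than spilling into degree $N+1$; a factor of degree two such as $r^2-\rho_j^2$ would break this, which is why the linear factor is the right choice.
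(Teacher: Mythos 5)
Your proposal is correct and follows essentially the same route as the paper: each part is reduced to the reproducing property (\ref{EQ:RepPropKernelPoly}) or cited directly from Lemma~\ref{LEM:OptimizationProblemSolution}, Theorem~\ref{THM:BasisTranslatesKernelPoly}, and Lemma~\ref{LEM:Christoffel-Darboux}, exactly as the authors do. Your added remarks on conjugate bookkeeping in (i) and (iv) and the degree count in (v) are careful elaborations of steps the paper leaves implicit, not a different argument.
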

\begin{proof}
(i)
$$
\langle \phi_{N,i}, \phi_{N,j}\rangle
=
\langle K_N(r, \phi; P_i), K_N(r, \phi; P_j) \rangle
=
 \phi_{N,i}(P_j)
$$
which is true by the reproducing property of the kernel polynomial $K_N.$

\noindent
(ii) This follows immediately from Lemma~\ref{LEM:OptimizationProblemSolution}.

\noindent
(iii)
This follows immediately from Theorem~\ref{THM:BasisTranslatesKernelPoly}.

\noindent
(iv) The dual $\{\widetilde{\phi}_{N,j}\}_{j=1}^J$ should satisfy the biorthogonality condition, i.e.,
    $$
    \langle \phi_{N,j}, \widetilde{\phi}_{N,k}\rangle = \delta_{jk} .
    $$
Recall the fundamental Lagrange interpolating polynomials $\{\ell_k (x,y)\}_{k=1}^J$ with respect to the points $\{P_j\}_{j=1}^J$ defined in the proof of Theorem~\ref{THM:BasisTranslatesKernelPoly}. Due to the reproducing property of the kernel polynomials we get
    $$
    \langle \phi_{N,j}, \ell_{k} \rangle = \ell_{k}(P_j) =  \delta_{jk} .
    $$
Thus the Lagrange interpolating polynomials $\{\ell_k (x,y)\}_{k=1}^J$ form the dual of the scaling functions.

\noindent
(v) Let $q \in V_{N-1}.$ Applying the reproducing property of the kernel polynomial given in (\ref{EQ:RepPropKernelPoly}) to $q(r,\phi) ((r,\phi) - P_j)$ gives
    $$
    \langle \phi_{N, j}(\cdot ,\cdot) \ , \ q(\cdot , \cdot)(\cdot - P_j) \rangle
    = \overline{q(P_j)} \overline{(P_j - P_j)} = 0.
    $$

 \noindent (vi) This follows from Lemma~\ref{LEM:Christoffel-Darboux}. 
\end{proof}

\begin{theorem}
Consider a set of regular points $\{P_j\}_{j=1}^J$ as described in Definition~\ref{DEF:SpecialPoints}. The following conditions are equivalent regarding the scaling functions $\{\phi_{N,j}\}_{j=1}^J$.

\begin{enumerate} [(i)]

\item The scaling functions form an orthogonal set i.e.,
    $$
    \langle \phi_{N,k}, \phi_{N,\ell} \rangle  = 0 \quad \textrm{for $k \neq \ell$}.
    $$

\item The scaling functions satisfy
    $$
    \phi_{N,k}(P_{\ell}) = d_k^{(N)} \delta_{kl} \quad \textrm{for} \ k, \ell = 1, \ldots, J
    $$
where $d_k^{(N)} \in \mathbb{R}.$

%\item Write the Zernike polynomials $\{Z^m_n\}_{\substack{n = 0, |m| \leq n \\ (n-m) \textrm{even}}}^N$ as $\{Z_j\}_{j=0}^{J-1}$, where $J = \frac{(N+1)(N+2)}{2},$ by using
%the relationship given in (\ref{EQ:ZCPIndex}). The points $\{P_j\}_{j=1}^J$ define a cubature rule that is exact for polynomials of degree $2N$ i.e.
%        $$
%        \int_{0}^1 \int_{0}^{2\pi} q(r, \phi) \ r \ dr \ d \phi
%        = \sum_{k=1}^{J} \frac{1}{d_k^{(N)}} q(P_k) \quad \textrm{for all $q \in V_{2N}$},
%        $$
%where $d_k^{(N)} = \phi_{N,k}(P_k).$
%        %$$
%%        \int_{0}^1 \int_{0}^{2\pi} Z_{i}(r, \phi) \overline{Z_{j}(r, \phi)} \ r \ dr \ d \phi
%%        = \sum_{k=1}^{J} \frac{1}{d_k} Z_{i}(P_k)\overline{Z_j(P_k)}, \ 0 \leq i, j \leq J-1.
%%        $$
\end{enumerate}
\end{theorem}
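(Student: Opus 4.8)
The plan is to reduce the whole statement to part (i) of Theorem~\ref{THM:PropertiesScalingFunction}, namely the identity $\langle \phi_{N,i},\phi_{N,j}\rangle = \phi_{N,i}(P_j)$, which is itself just the reproducing property (\ref{EQ:RepPropKernelPoly}) applied to $\phi_{N,i}=K_N(\cdot,\cdot;P_i)\in V_N$. Once this identity is invoked, each of (i) and (ii) becomes literally a statement about the vanishing of the off-diagonal evaluations $\phi_{N,k}(P_\ell)$, and the two translate into one another with no further work. The only substantive remark I would make up front is about the diagonal values: by Definition~\ref{DEF:SpecialPoints} and (\ref{EQ:ZCPNorm2}),
$$
\phi_{N,k}(P_k)=K_N(\rho_k,\theta_k;\rho_k,\theta_k)=\sum_{n=0}^N\sum_{\substack{m=-n \\ (n-m)\,\textrm{even}}}^n (\gamma_n^m)^2\bigl(R_n^{|m|}(\rho_k)\bigr)^2,
$$
so $\phi_{N,k}(P_k)=\|\phi_{N,k}\|^2$ is a (nonnegative) real number, and I would set $d_k^{(N)}:=\phi_{N,k}(P_k)\in\mathbb{R}$.

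For the implication (i)$\Rightarrow$(ii) I would assume $\langle\phi_{N,k},\phi_{N,\ell}\rangle=0$ for $k\neq\ell$ and apply Theorem~\ref{THM:PropertiesScalingFunction}(i) to get $\phi_{N,k}(P_\ell)=\langle\phi_{N,k},\phi_{N,\ell}\rangle=0$ for $k\neq\ell$; the same identity with $k=\ell$ gives $\phi_{N,k}(P_k)=\|\phi_{N,k}\|^2=d_k^{(N)}$, and the two cases combine into $\phi_{N,k}(P_\ell)=d_k^{(N)}\delta_{k\ell}$, which is (ii). For the converse (ii)$\Rightarrow$(i) I would assume $\phi_{N,k}(P_\ell)=d_k^{(N)}\delta_{k\ell}$; for $k\neq\ell$ this says $\phi_{N,k}(P_\ell)=0$, and Theorem~\ref{THM:PropertiesScalingFunction}(i) again gives $\langle\phi_{N,k},\phi_{N,\ell}\rangle=\phi_{N,k}(P_\ell)=0$, which is exactly (i).

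I do not expect a genuine obstacle in this proof, since the reproducing property converts the off-diagonal inner products into the off-diagonal point evaluations verbatim. The only points requiring any care are bookkeeping ones: keeping the index order consistent in the identity $\langle\phi_{N,k},\phi_{N,\ell}\rangle=\phi_{N,k}(P_\ell)$ (so that the same $d_k^{(N)}$ indexed by $k$ appears on both sides of (ii)), and confirming that the common diagonal constant is real rather than merely complex, which the displayed sum of squares settles. Thus the argument is essentially a one-line corollary of Theorem~\ref{THM:PropertiesScalingFunction}(i) applied in each direction.
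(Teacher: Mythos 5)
Your proposal is correct and follows exactly the paper's argument: both directions are immediate consequences of the identity $\langle\phi_{N,k},\phi_{N,\ell}\rangle=\phi_{N,k}(P_\ell)$ from Theorem~\ref{THM:PropertiesScalingFunction}(i), with $d_k^{(N)}=\|\phi_{N,k}\|^2$ on the diagonal. Your additional observation that the diagonal value is real because it equals a sum of squares is a harmless (and mildly clarifying) supplement to what the paper writes.
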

\begin{proof}
We first show that (i) implies (ii).
\\
From Theorem~\ref{THM:PropertiesScalingFunction} (i) it is known that
    $$
    \langle \phi_{N,k}, \phi_{N,\ell}\rangle = \phi_{N,k}(P_{\ell}) .
    $$
Assuming (i) holds, this implies that
    $$
    \phi_{N,k}(P_{\ell})
    =
    \left\{
    \begin{array}{cc}
    0 & \textrm{if $k \neq \ell$} \\
    \langle \phi_{N,k}, \phi_{N, k}\rangle = \| \phi_{N,k} \|^2 & \textrm{if $k = \ell.$}
    \end{array}
    \right.
    $$
Setting $d_k^{(N)} = \| \phi_{N,k} \|^2$ for each $k = 1, \ldots, J$ gives the required result.
\\
(ii) implies (i) is obvious by Theorem~\ref{THM:PropertiesScalingFunction} (i).

\end{proof}

\begin{figure}[!h]
\centering
\includegraphics[width=1 \textwidth]{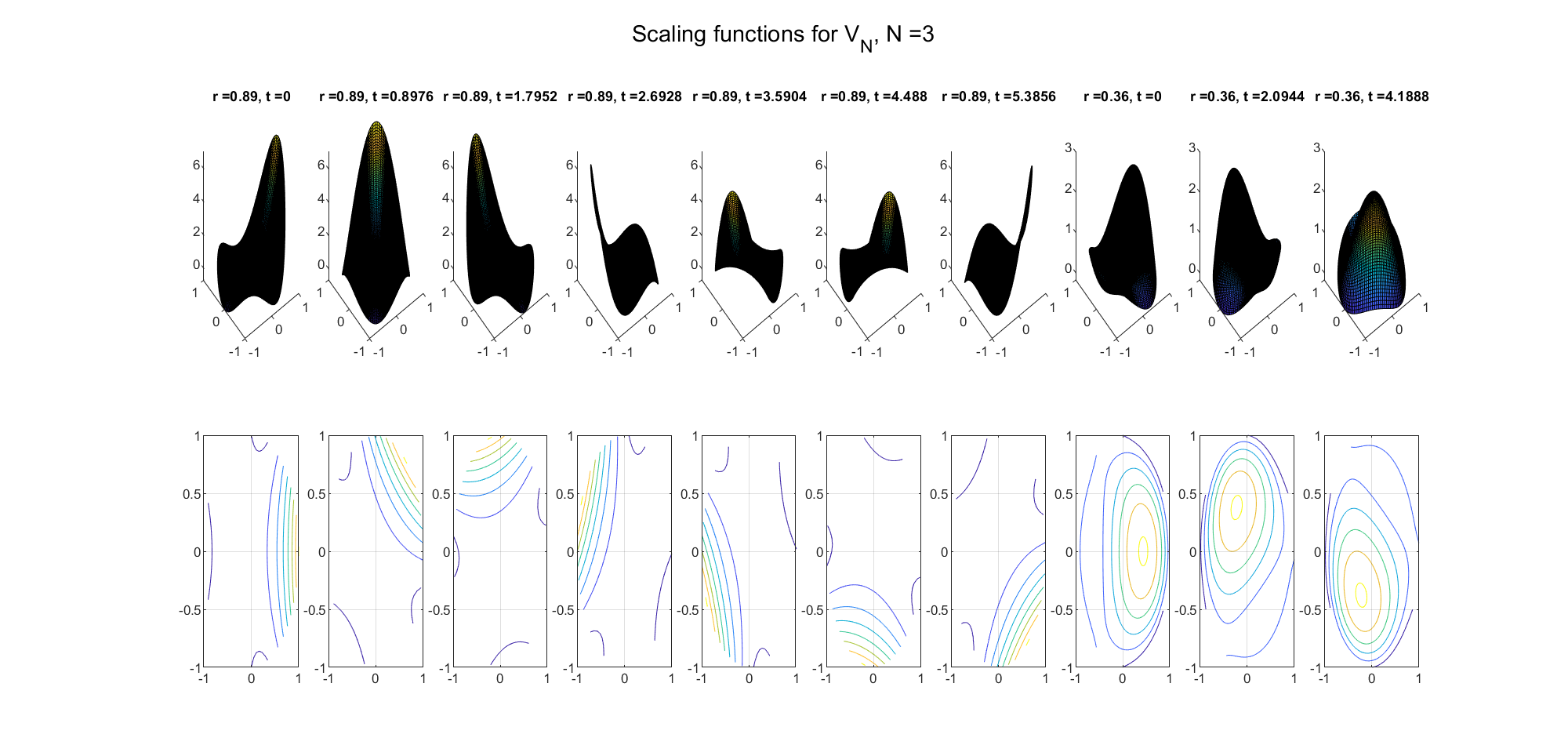}
\caption{The surfaces and contour lines of the scaling functions for $V_3$.}
\label{FIG:ScaleFunctions}
\end{figure}
\begin{figure}[!h]
\centering
\includegraphics[width=.5 \textwidth]{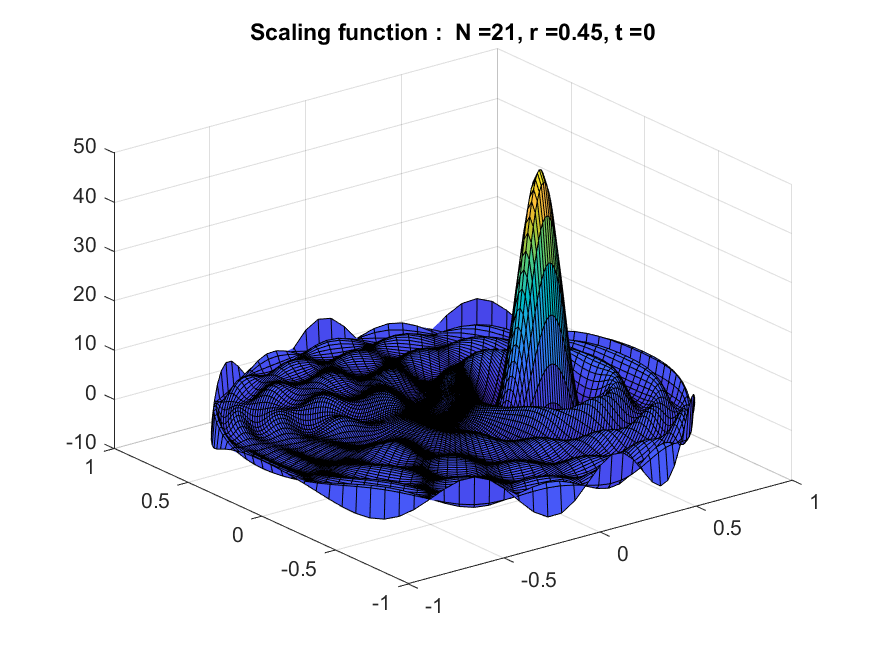}
\caption{The surface of a scaling function for $V_{21}$.}
\label{FIG:ScaleFunctions2}
\end{figure}

The scaling functions for $N = 3$ are shown in Figure~\ref{FIG:ScaleFunctions}. The values of $r$ and $t$ in the figure represent the values of $\rho$ and $\theta,$ respectively, corresponding to each scaling function, with $t$ being in radians and between zero and $2\pi.$ The localized nature of the scaling functions in not evident in Figure~\ref{FIG:ScaleFunctions} due to the fact that the value of the maximum degree $N$ of polynomials considered there is too small. For higher values of $N$, one can clearly see that the scaling functions are localized. This is shown in Figure~\ref{FIG:ScaleFunctions2} for one scaling function when $N = 21$ at $t = 0$ and $r = .45,$ which are the values of $\theta$ and $\rho,$ respectively.

\section{Wavelets}\label{SEC:Wavelets}

In this section, we define wavelet functions in terms of kernel polynomials and present some of their properties. For $N \in \mathbb{N}_0,$ let $J_N = \frac{(N+1)(N+2)}{2}$ denote the dimension of $V_N.$ Consider the sequence of spaces $\{V_N\}_{N=0}^{\infty}$ where, using Corollary~\ref{COR:ONB_PI_N} and  Theorem~\ref{THM:PropertiesScalingFunction} (iii),
    $$
    V_N = \textrm{span} \{Z^m_n\}_{\substack{n = 0, |m| \leq n \\ (n - m) \textrm{even}}}^N = \textrm{span} \{ \phi_{N, j} \}_{j=1}^{J_N}.
    $$
Note that in $\{ \phi_{N, j} \}_{j=1}^{J_N},$ where $\phi_{N, j} = K_N(r, \phi; \rho^{(N)}_{j}, \theta_j^{(N)}),$ we are considering a set of regular points $\{\rho^{(N)}_{j}, \theta_j^{(N)})\}_{j=1}^{J_N}$ as described in Definition~\ref{DEF:SpecialPoints} with $J$ replaced by $J_N$ in the notation. For $N \in \mathbb{N},$ define
    $$
    W_N := V_{2N} \ominus V_N
    =
    \textrm{span}\{ Z^m_n \}_{\substack{n = N+1 , |m| \leq n \\ (n - m) \textrm{even}}}^{2N}
    =
    \textrm{span}\{x^{\alpha} : N < |\alpha| \leq 2N, \alpha \in \mathbb{N}_0^2 \},
    $$
   and
    $$
    W_0 := V_1 \ominus V_0 = \textrm{span}\{x_1, x_2\}. 
    $$
For $N \in \mathbb{N},$ the dimension of $W_N$ is
    $$
    D_N := \textrm{dim} (W_N) = \textrm{dim} (V_{2N})  - \textrm{dim} (V_N) = \frac{(2N+1)(2N+2)}{2} - \frac{(N+1)(N+2)}{2} = \frac{3N(N+1)}{2},
    $$
    and 
    $$
    D_0 := \textrm{dim} (W_0) = \textrm{dim} (V_{1})  - \textrm{dim} (V_0) = 2. 
    $$
Our target functions, called \textit{wavelet functions} or \textit{wavelets} in short, that we wish to form a localized basis for the space $W_N,$ are defined in polar form as
    \begin{align}
    \psi_{N,j}(r, \phi)
    &:=
    K_{2N}(r, \phi; \mu_j^{(N)}, \omega_j^{(N)}) - K_N(r, \phi; \mu_j^{(N)}, \omega_j^{(N)}) \nonumber \\
    &=
    \sum_{n = N+1}^{2N} \sum_{\substack{m=-n \\ (n-m) \textrm{even}}}^n \overline{Z^m_n(\mu_j^{(N)}, \omega_j^{(N)})} Z_n^m (r, \phi), \quad j = 1, \ldots, D_N, \label{EQ:WaveletZCP} \\
    &=
    \sum_{\ell = J_N + 1}^{J_{2N}} \overline{Z_{\ell}(\mu_j^{(N)}, \omega_j^{(N)})} Z_{\ell} (r, \phi), \quad j = 1, \ldots, D_N, \label{EQ_2:WaveletZCP}
    \end{align}
for a suitable set of parameter points $\{\mu^{(N)}_{j}, \omega_j^{(N)})\}_{j=1}^{D_N}.$

\bigskip

\noindent
Some properties of the wavelets defined in (\ref{EQ:WaveletZCP}) are now given.
\begin{theorem} \label{THM:PropertiesZCPWavelets}
Let $\{\psi_{N,j}(r, \phi)\}_{j=1}^{D_N}$ be the wavelets as defined in (\ref{EQ:WaveletZCP}) with respect to the set $\{(\mu^{(N)}_{j}, \omega_j^{(N)})\}_{j=1}^{D_N}.$
Then the following hold.

\begin{enumerate}[(i)]

\item
$$
\langle \psi_{N,j}(r, \phi), \psi_{N,k}(r, \phi) \rangle = \psi_{N,k}(\mu^{(N)}_{j}, \omega_j^{(N)}).
$$

\item The wavelet $\psi_{N,j}$ is localized around $(\mu^{(N)}_{j}, \omega_j^{(N)}),$ i.e.,
    $$
    \left\| \frac{\psi_{N,j} (r, \phi)}{\psi_{N,j}(\mu^{(N)}_{j}, \omega_j^{(N)})} \right\| = \min \{\|\psi\| : \psi \in W_N, \psi(\mu^{(N)}_{j}, \omega_j^{(N)}) = 1\} .
    $$

\item %Let $J =\frac{(N+1)(N+2)}{2}.$ 
Let $\{\phi_{N,j}\}_{j=1}^{J_N}$ be the set of scaling functions in $V_N$ based on the set of points $\{\rho^{(N)}_{j}, \theta_j^{(N)}\}_{j=1}^{J_N}.$ Then the wavelets and scaling functions are orthogonal, i.e.,
        $$
        \langle \psi_{N,j}, \phi_{N,k}\rangle = 0; \quad j = 1, \ldots, D_N, \ k = 1, \ldots, J_N.
        $$

\end{enumerate}

\end{theorem}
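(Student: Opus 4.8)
The plan is to recognize that the wavelet $\psi_{N,k}$ plays, for the space $W_N$, exactly the role that $K_N$ plays for $V_N$ in the reproducing identity (\ref{EQ:RepPropKernelPoly}). So the first step I would take is to establish the reproducing property for $W_N$: for every $\psi \in W_N$,
$\langle \psi, \psi_{N,k}\rangle = \psi(\mu_k^{(N)}, \omega_k^{(N)})$.
This follows by writing $\psi_{N,k} = K_{2N}(\cdot,\cdot;\mu_k^{(N)},\omega_k^{(N)}) - K_N(\cdot,\cdot;\mu_k^{(N)},\omega_k^{(N)})$ and using $\psi \in W_N \subseteq V_{2N}$. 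The reproducing property (\ref{EQ:RepPropKernelPoly}) of $K_{2N}$ gives $\langle \psi, K_{2N}(\cdot,\cdot;\mu_k^{(N)},\omega_k^{(N)})\rangle = \psi(\mu_k^{(N)},\omega_k^{(N)})$, while $K_N(\cdot,\cdot;\mu_k^{(N)},\omega_k^{(N)}) \in V_N$ together with $\psi \perp V_N$ (as $W_N = V_{2N}\ominus V_N$) forces the second inner product to vanish.

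Granting this identity, part (i) is immediate on taking $\psi = \psi_{N,j} \in W_N$, exactly as in Theorem~\ref{THM:PropertiesScalingFunction}(i). Part (iii) is also immediate: $\psi_{N,j} \in W_N$ is orthogonal to all of $V_N$ by the construction $W_N = V_{2N}\ominus V_N$, and each scaling function $\phi_{N,k}$ lies in $V_N$. Equivalently, one can read it off the orthonormality of the $Z_n^m$, since $\psi_{N,j}$ is a combination of $Z_n^m$ with radial degrees $N+1 \le n \le 2N$ whereas $\phi_{N,k}$ uses only degrees $0 \le n \le N$, and these index ranges are disjoint.

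For part (ii) I would mirror the argument of Lemma~\ref{LEM:OptimizationProblemSolution}, now carried out inside $W_N$. Writing an arbitrary $\psi \in W_N$ with $\psi(\mu_j^{(N)},\omega_j^{(N)}) = 1$ in the orthonormal Zernike basis of $W_N$ as $\psi = \sum_{n=N+1}^{2N}\sum_m c_{nm} Z_n^m$, orthonormality gives $\|\psi\|^2 = \sum_{n,m} |c_{nm}|^2$, and Cauchy--Schwarz applied to $1 = |\psi(\mu_j^{(N)},\omega_j^{(N)})|^2$ yields $1 \le \|\psi\|^2 \sum_{n,m} |Z_n^m(\mu_j^{(N)},\omega_j^{(N)})|^2$. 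Since part (i) with $k=j$ identifies $\psi_{N,j}(\mu_j^{(N)},\omega_j^{(N)}) = \|\psi_{N,j}\|^2 = \sum_{n,m} |Z_n^m(\mu_j^{(N)},\omega_j^{(N)})|^2$, the normalized wavelet $\psi_{N,j}/\psi_{N,j}(\mu_j^{(N)},\omega_j^{(N)})$ attains this lower bound, so it solves the minimization.

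The computations are routine; the only real content is the observation in the first paragraph that $W_N$ inherits a reproducing kernel as the difference $K_{2N} - K_N$, after which (i) and (iii) are one-line consequences and (ii) is a verbatim adaptation of Lemma~\ref{LEM:OptimizationProblemSolution}. The one point worth checking is that $\psi_{N,j}(\mu_j^{(N)},\omega_j^{(N)})$ is a positive real number, so that the normalization in (ii) is legitimate; this is clear since it equals $\sum_{n,m} |Z_n^m(\mu_j^{(N)},\omega_j^{(N)})|^2$, which is real and strictly positive provided the parameter point is chosen so that the relevant radial polynomials do not all vanish there.
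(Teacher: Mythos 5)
Your proposal is correct and follows essentially the same route as the paper: split $\psi_{N,j} = K_{2N}(\cdot,\cdot;\mu_j^{(N)},\omega_j^{(N)}) - K_N(\cdot,\cdot;\mu_j^{(N)},\omega_j^{(N)})$, use the reproducing property of $K_{2N}$ on $V_{2N}$ together with the orthogonality of $W_N$ to $V_N$ for (i) and (iii), and rerun the Cauchy--Schwarz argument of Lemma~\ref{LEM:OptimizationProblemSolution} inside $W_N$ for (ii). The only discrepancy is that your reproducing identity gives $\langle \psi_{N,j},\psi_{N,k}\rangle = \psi_{N,j}(\mu_k^{(N)},\omega_k^{(N)})$ rather than the stated $\psi_{N,k}(\mu_j^{(N)},\omega_j^{(N)})$; the two differ by a complex conjugation (they coincide for real-valued data), a looseness that the paper's own proof shares, so it does not count against you.
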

\begin{proof}
\begin{enumerate}[(i)]

\item

\begin{align}
\langle \psi_{N,j}(r, \phi), \psi_{N,k}(r, \phi) \rangle
&=
\left \langle K_{2N}(\cdot, \cdot ; \mu_j^{(N)}, \omega_j^{(N)}) - K_N(\cdot, \cdot; \mu_j^{(N)}, \omega_j^{(N)}), \psi_{N,k}(\cdot, \cdot) \right\rangle \nonumber \\
&=
\left\langle K_{2N}(\cdot, \cdot ; \mu_j^{(N)}, \omega_j^{(N)}), \psi_{N,k}(\cdot, \cdot) \right\rangle - \left\langle K_N(\cdot, \cdot; \mu_j^{(N)}, \omega_j^{(N)}), \psi_{N,k}(\cdot, \cdot) \right\rangle . \label{EQ:IPZCPWavelets}
\end{align}
Due to the reproducing property of the kernel polynomials, the first term on the right of (\ref{EQ:IPZCPWavelets}) is equal to $\psi_{N,k}(\mu_j^{(N)}, \omega_j^{(N)}).$ The second term on the right of (\ref{EQ:IPZCPWavelets}) is zero due to orthogonality of the Zernike polynomials. Thus we have
$$
\langle \psi_{N,j}(r, \phi), \psi_{N,k}(r, \phi) \rangle  = \psi_{N,k}(\mu_j^{(N)}, \omega_j^{(N)})
$$
as needed.

\item The proof is identical to the proof of Lemma~\ref{LEM:OptimizationProblemSolution}.

\item
\begin{align*}
\langle \psi_{N,j}, \phi_{N,k}\rangle
&=
\left\langle K_{2N}(\cdot, \cdot; \mu_j^{(N)}, \omega_j^{(N)})- K_{N}(\cdot, \cdot; \mu_j^{(N)}, \omega_j^{(N)}) , K_{N}(\cdot, \cdot; \rho_k^{(N)}, \theta_k^{(N)}) \right\rangle \\
&=
\left\langle \sum_{n = N+1}^{2N} \sum_{\substack{m=-n \\ (n-m) \textrm{even}}}^n \overline{Z^m_n(\mu_j^{(N)}, \omega_j^{(N)})} Z_n^m (r, \phi) , \sum_{n = 0}^{N} \sum_{\substack{m=-n \\ (n-m) \textrm{even}}}^n \overline{Z^m_n(\rho_k^{(N)}, \theta_k^{(N)})} Z_n^m (r, \phi) \right\rangle \\
&=
0
\end{align*}
due to orthogonality of the Zernike polynomials.

\end{enumerate}
\end{proof}

\paragraph{\textbf{Linear independence of the wavelet functions and the QR algorithm}}
Considering our goal of getting bases for the $W_N$s, we note that one cannot expect $\{\psi_{N,j}(r, \phi)\}_{j=1}^{D_N}$ to be linearly independent for an arbitrary set of points $\{(\mu^{(N)}_{j}, \omega_j^{(N)})\}_{j=1}^{D_N}.$ Selecting points that will guarantee linear independence of the set $\{\psi_{N,j}(r, \phi)\}_{j=1}^{D_N},$ along with good conditioning, poses an important and challenging problem.  Recently, the authors were made aware of the \textit{iterated QR algorithm} that may be used for this purpose \cite{Bos2008, Sommariva2009}. We outline the main idea here in our setting.   Start with a sufficiently large and dense discretization of the unit circle: $\{\rho_i, \theta_i\} \subset B(0,1).$ For the sake of explanation, take $\{\rho_i, \theta_i\}_{i=1}^{J_{2N}}$ as the set of regular points (Definition~\ref{DEF:SpecialPoints}) used to construct scaling functions for $V_{2N},$ but if needed, a different set of more densely spaced points can be taken. Then, let $A_N$ be the rectangular  \textit{Vandermonde} matrix 
	$$
	A_N = [Z_{ j}(\rho_i, \theta_i)]_{\substack{1\leq i \leq J_{2N} \\J_N < j \leq J_{2N}}}.
	$$
A greedy algorithm, based on the QR factorization, has been shown \cite{Bos2008, Sommariva2009} to extract a nonsingular Vandermonde submatrix of $A_N.$ The resulting $D_N$ points that are selected, called ``approximate Fekete points", aim to ``maximize'' the Vandermonde determinant absolute value and are thus also ``good" interpolation points. Hence, such a set of points $\{ \mu_j^{(N)}, \omega_j^{(N)} \}_{j=1}^{D_N}$ would be an appropriate candidate for constructing a basis of wavelets functions for $W_N.$

For our experiments, we have made a random selection of $D_N$ points from the set of regular points given in Definition~\ref{DEF:SpecialPoints} by setting $N = 2N$ there. That is, we take the points needed for scaling functions of $V_{2N}$ and take a random subset of $D_N$ points for the wavelet functions in $W_N.$
Figure~\ref{FIG:SamplingPoints} shows points for $N=3$ taken according to Definition~\ref{DEF:SpecialPoints} for the scaling functions. For the wavelets, we take the set of sampling points needed for scaling functions in $V_{2N},$ i.e., $V_6,$ and then take a subset of $D_3 = 18$ randomly selected points  from that set. Both are shown in Figure~\ref{FIG:SamplingPoints} . A few wavelet functions constructed by (\ref{EQ:WaveletZCP}) are shown in Figure~\ref{FIG:Wavelets}. The values of $r$ and $t$ shown in Figure~\ref{FIG:Wavelets} represent the values of $\rho$ and $\theta$, respectively, corresponding to each wavelet function, with $t$ being in radians.
%\begin{proof}

%\end{proof}
%
%
\begin{figure}[!h]
\centering
\includegraphics[width=.4 \textwidth]{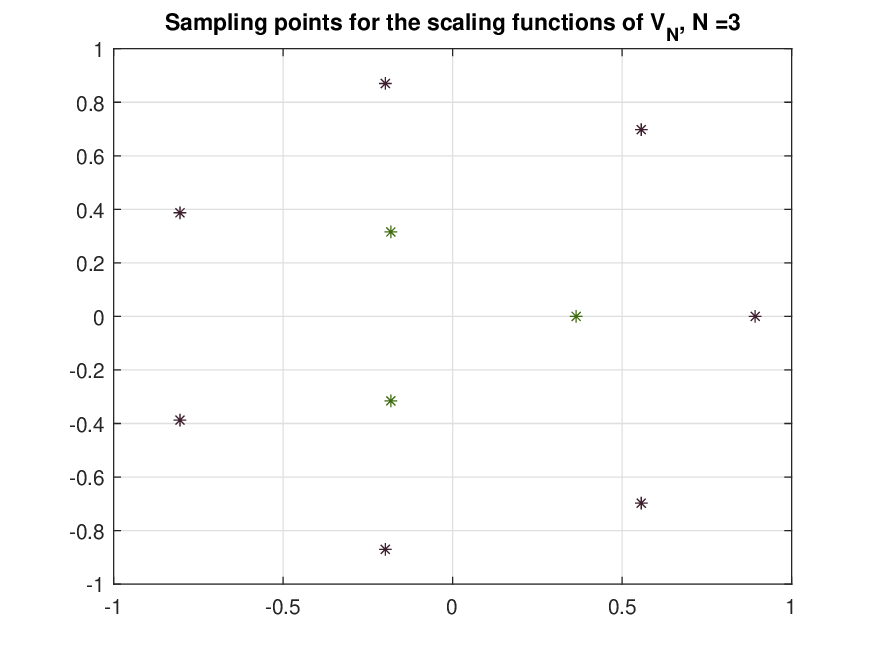}
\includegraphics[width=.4\textwidth]{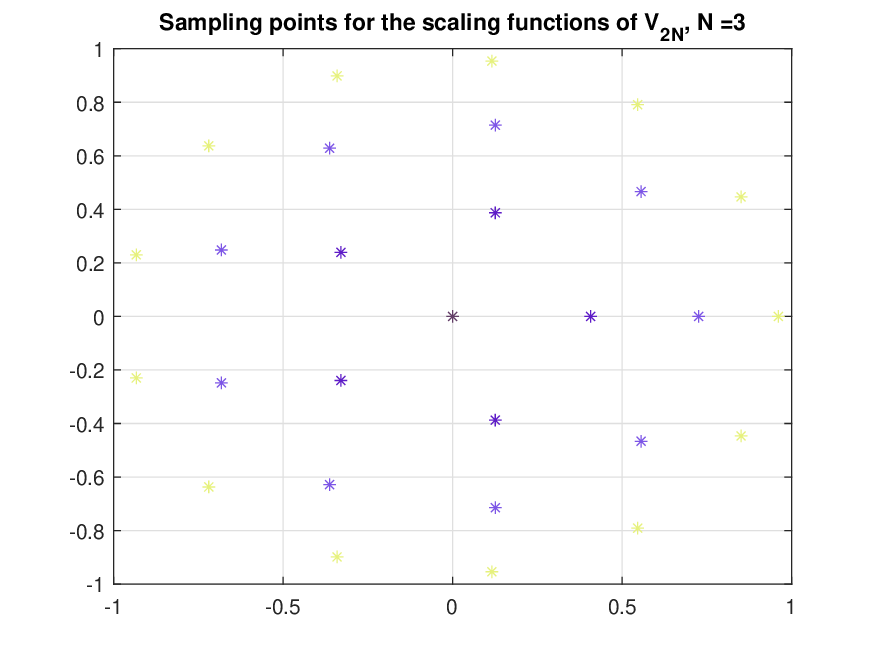}
\includegraphics[width=.4 \textwidth]{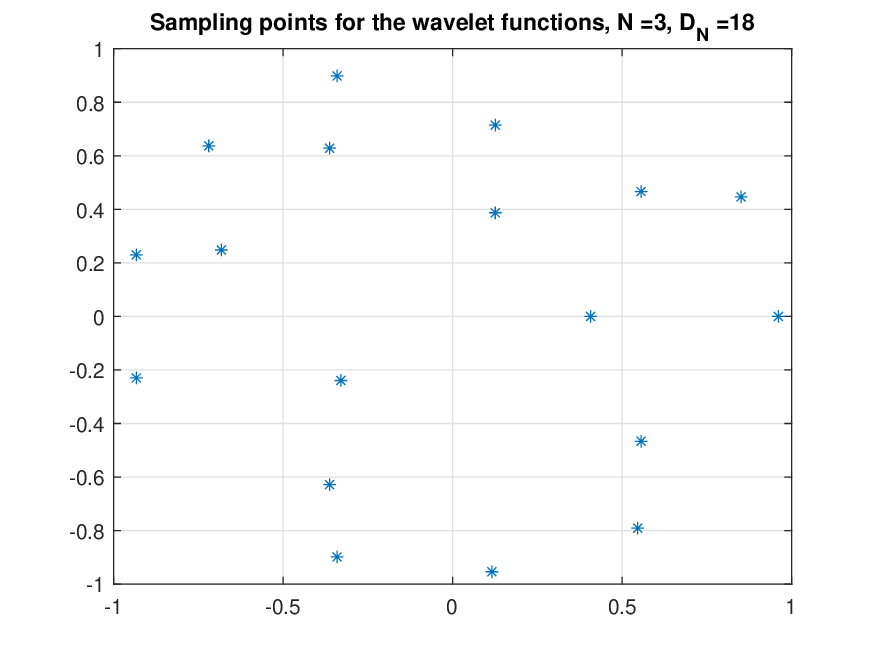}
\caption{Sampling points;  $N = 3$.}
\label{FIG:SamplingPoints}
\end{figure}
\begin{figure}[!h]
\centering
\includegraphics[width=.4 \textwidth]{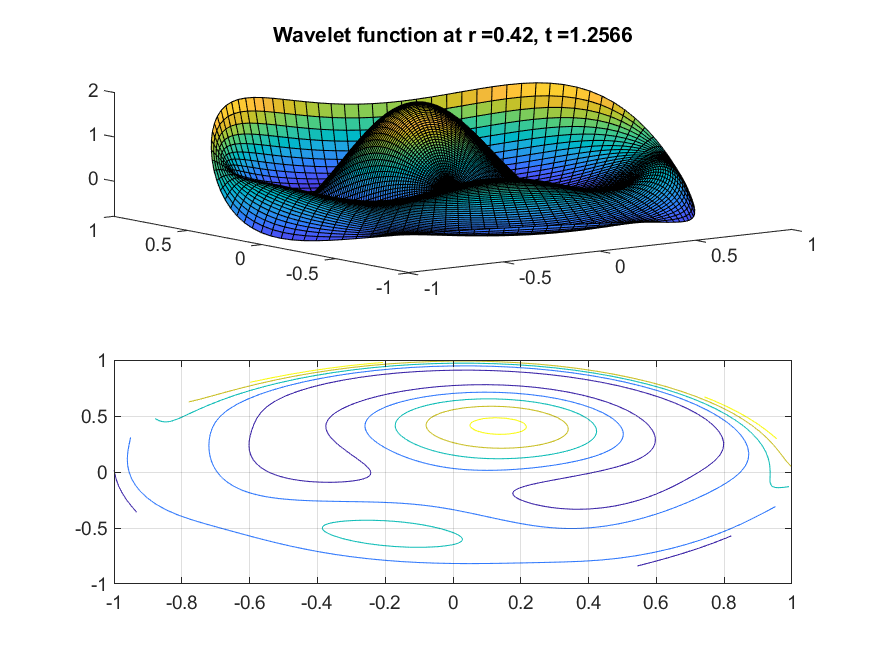}
\includegraphics[width=.4\textwidth]{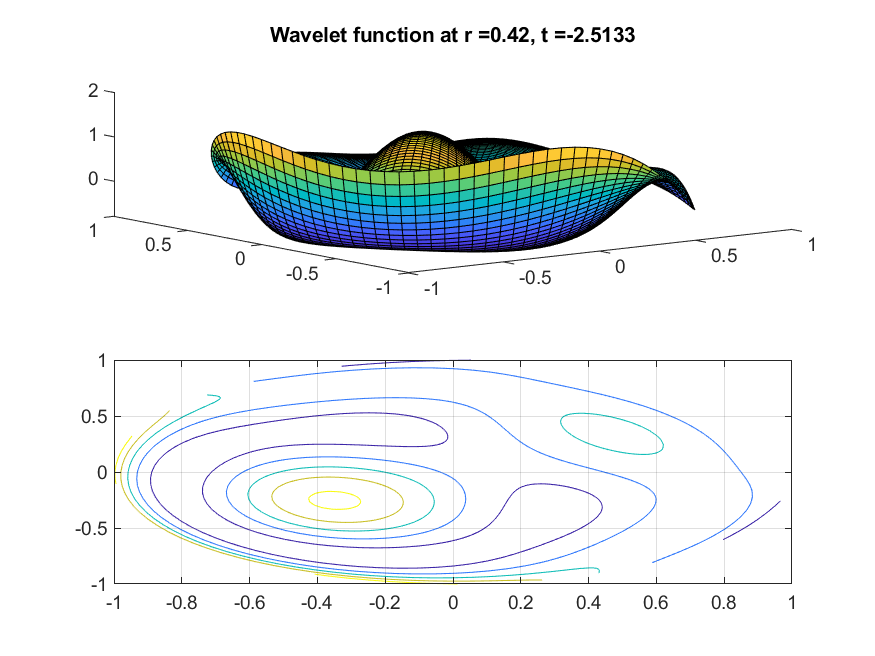}
\includegraphics[width=.4 \textwidth]{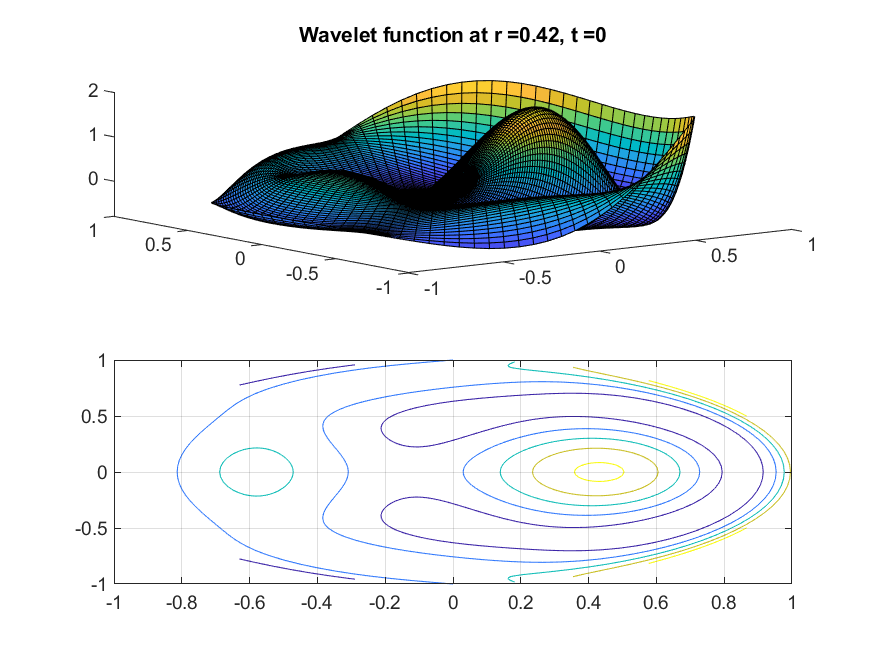}
\caption{Surfaces and contour lines of some wavelet functions, $N = 3$.}
\label{FIG:Wavelets}
\end{figure}

\section{Dual functions}\label{SEC:Duals}

Even when one has a set of points $\{(\mu^{(N)}_{j}, \omega_j^{(N)})\}_{j=1}^{D_N}$ (say by using a QR algorithm as discussed in Section~\ref{SEC:Wavelets}) so that the wavelet functions constructed from these points is a basis for $W_N$, it may still be impossible to get orthogonal wavelets. In that case, in order to reconstruct any signal, one needs to find the dual of the wavelet basis. We illustrate a method of finding a matrix representation for the dual. This approach may be useful in computations. 

Consider a wavelet basis $\{\psi_{N, j}(r, \varphi)\}_{j=1}^{D_N}$ of $W_N.$ Take $f \in W_N.$ This can be written as
	$$
	 f = \sum_{j = 1}^{D_N} c_{j} \psi_{N, j} .
	$$
The scalars $c_j$ are referred to as the \textit{wavelet coefficients} of $f.$ This is encountered in the numerical examples in Section~\ref{SEC:NumResults}.
Denote the dual of the wavelet basis by $\{\widetilde{\psi}_{N, j}(r, \phi)\}_{j=1}^{D_N}.$ Then $f$ can be written as
    $$
    f = \sum_{j = 1}^{D_N}\langle f, \psi_{N, j} \rangle \widetilde{\psi}_{N, j} .
    $$
Due to Corollary~\ref{COR:ONB_PI_N}, we know that $\{Z_{\ell}(r, \phi)\}_{\ell = J_N + 1}^{J_{2N}}$ is an ONB for $W_N = V_{2N} \ominus V_N.$ In terms of this ONB, the function $f$ can be written as
    $$
    f = \sum_{\ell = J_N + 1}^{J_{2N}} f^{(\ell)} Z_{\ell}(r, \phi).
    $$
Let us identify $f$ with the vector $[f^{(J_N + 1)}  \ \ldots \ f^{(J_{2N})}]^{\textrm{\tiny{T}}}.$
Recall that the $j$th wavelet function is
    \begin{equation} \label{EQ:Wavelet}
    \psi_{N, j}(r, \phi) = \sum_{\ell = J_N + 1}^{J_{2N}} \overline{Z_{\ell}(\mu_j, \omega_j)} Z_{\ell}(r, \phi),
    \end{equation}
%where the set of points $\{(\rho_j, \theta_j)\}_{j=1}^{D_N}$ is chosen in a manner outlined in Theorem~\ref{THM:LI_Wavelets} so that $\{\psi_{N, j}(r, \phi)\}_{j=1}^{D_N}$ is a linearly independent set. 
where the superscript $N$ has been dropped from the parameter points for convenience of notation. We identify $\psi_{N, j}(r, \phi)$ with the vector
$[Z_{J_N + 1}(\mu_j, \omega_j) \ Z_{J_N + 2}(\mu_j, \omega_j) \ \ldots \ Z_{J_{2N}}(\mu_j, \omega_j)]^*,$ where $*$ represents the complex conjugate transpose. Consider the matrix $A_N$ whose complex conjugate transpose is
    $$
    A_{N}^*
    =
    \begin{bmatrix}
    \overline{Z_{J_N+1}}(\mu_1, \omega_1) & \overline{Z_{J_N+1}}(\mu_2, \omega_2) & \cdots & \overline{Z_{J_N+1}}(\mu_{D_N}, \omega_{D_N}) \\
    \overline{Z_{J_N+2}}(\mu_1, \omega_1) & \overline{Z_{J_N+2}}(\mu_2, \omega_2) &\cdots & \overline{Z_{J_N+2}}(\mu_{D_N}, \omega_{D_N}) \\
    \vdots & \vdots & \vdots & \vdots \\
    \overline{Z_{J_{2N}}}(\mu_1, \omega_1) & \overline{Z_{J_{2N}}}(\mu_2, \omega_2) &\cdots & \overline{Z_{J_{2N}}}(\mu_{D_N}, \omega_{D_N})
    \end{bmatrix}.
    $$
 Note that the $j$th column of $A_N^*$ represents the $j$th wavelet function $\psi_{N, j}.$ In vector notation, it can be seen that
    $$
    A_N f
    =
    \begin{bmatrix}
    \langle f, \psi_{N,1}\rangle \\
    \langle f, \psi_{N,2}\rangle \\
    \vdots \\
    \langle f, \psi_{N,D_N}\rangle \\
    \end{bmatrix} .
    $$
 Therefore,
    \begin{align*}
    A_N^* A_N f
    &=
   \begin{bmatrix}
    \overline{Z_{J_N+1}}(\mu_1, \omega_1) & \overline{Z_{J_N+1}}(\mu_2, \omega_2) & \cdots & \overline{Z_{J_N+1}}(\mu_{D_N}, \omega_{D_N}) \\
    \overline{Z_{J_N+2}}(\mu_1, \omega_1) & \overline{Z_{J_N+2}}(\mu_2, \omega_2) &\cdots & \overline{Z_{J_N+2}}(\mu_{D_N}, \omega_{D_N}) \\
    \vdots & \vdots & \vdots & \vdots \\
    \overline{Z_{J_{2N}}}(\mu_1, \omega_1) & \overline{Z_{J_{2N}}}(\mu_2, \omega_2) &\cdots & \overline{Z_{J_{2N}}}(\mu_{D_N}, \omega_{D_N})
    \end{bmatrix}
    \begin{bmatrix}
    \langle f, \psi_{N,1}\rangle \\
    \langle f, \psi_{N,2}\rangle \\
    \vdots \\
    \langle f, \psi_{N,D_N}\rangle \\
    \end{bmatrix} \\
    &=
    \langle f, \psi_{N,1} \rangle
    \begin{bmatrix}
    \overline{Z_{J_N+1}}(\mu_1, \omega_1) \\
    \overline{Z_{J_N+2}}(\mu_1, \omega_1) \\
    \vdots \\
    \overline{Z_{J_{2N}}}(\mu_1, \omega_1)
    \end{bmatrix}
    +
    \cdots
    +
    \langle f, \psi_{N,D_N} \rangle
    \begin{bmatrix}
    \overline{Z_{J_N+1}}(\mu_{D_N}, \omega_{D_N}) \\
    \overline{Z_{J_N+2}}(\mu_{D_N}, \omega_{D_N}) \\
    \vdots \\
    \overline{Z_{J_{2N}}}(\mu_{D_N}, \omega_{D_N})
    \end{bmatrix}
    \\
    &= \sum_{j=1}^{D_N} \langle f, \psi_{N,j} \rangle \psi_{N,j}
    \\
    \implies f &= \sum_{j=1}^{D_N} \langle f, \psi_{N,j} \rangle (A_N^* A_N)^{-1} \psi_{N,j}  = \sum_{j=1}^{D_N} \langle f, (A_N^* A_N)^{-1}\psi_{N,j} \rangle  \psi_{N,j}\\
    \implies \widetilde{\psi}_{N,j} &= (A_N^* A_N)^{-1} \psi_{N,j}.
    \end{align*}
%
%Since the points $\{(\rho_j, \theta_j)\}_{j=1}^{D_N}$ are chosen so that the corresponding wavelet set $\{\psi_{N,j}\}_{j=1}^{D_N}$ is a basis, it is guaranteed that the matrix $B_N B_N^*$ is invertible. 
The wavelet functions $\{\psi_{N,r}(r, \phi)\}_{r = 1}^{D_N}$ form an orthonormal set if and only if $A_N^* A_N$ is a diagonal matrix. Since this may not be possible for the choice of points used to construct the wavelets, one has to compute $(A_N^* A_N)^{-1}$ and then the dual $\{\widetilde{\psi}_{N,j}(r, \phi)\}_{j = 1}^{D_N}$ to reconstruct $f$ from the wavelet coefficients $\{\langle f, \psi_{N,j} \rangle\}_{j=1}^{D_N}.$ 

Another way to think about dual functions of wavelets in matrix or vector form is the following. In the expression given in (\ref{EQ:Wavelet}), let us consider sampling each function $\psi_{N,j}$ at $D$ points. This in turn implies that we are sampling each Zernike polynomial $Z_{\ell}(r, \phi)$ at $D$ points so that each Zernike polynomial $Z_{\ell}(r, \phi)$ can be thought of as a vector of $D$ elements. Equation (\ref{EQ:Wavelet}) can then be written as 
	\begin{eqnarray*}
	\psi_{N,j}(r, \phi) &=& \overline{Z_{J_N + 1}}(\mu_j, \omega_j) Z_{J_N + 1} (r, \phi) + \cdots + \overline{Z_{J_{2N}}}(\mu_j, \omega_j) Z_{J_{2N}} (r, \phi) \\
	&=& 
	\begin{bmatrix}
	| & | &  & | \\
	Z_{J_N + 1} & Z_{J_N + 2} & \cdots & Z_{J_{2N}} \\
	| & | &  & |
	\end{bmatrix} 
	\begin{bmatrix}
	 \overline{Z_{J_N + 1}}(\mu_j, \omega_j) \\
	 \vdots \\
	  \overline{Z_{J_{2N}}}(\mu_j, \omega_j)
	\end{bmatrix} \\
	&=& B \begin{bmatrix}
	 \overline{Z_{J_N + 1}}(\mu_j, \omega_j) \\
	 \vdots \\
	  \overline{Z_{J_{2N}}}(\mu_j, \omega_j)
	\end{bmatrix} .
	\end{eqnarray*} 
Note that $B$ is a $D \times D_N$ matrix. Let $\Psi$ be the matrix whose $j$th column is $\psi_{N,j}.$
$$
\Psi = \begin{bmatrix}
| & | &  & | \\
\psi_{N,1 } & \psi_{N,2} & \cdots & \psi_{N,D_N} \\
| & | & & |
\end{bmatrix}.
$$
Therefore, 
\begin{equation} \label{EQ:WaveletMatrix}
\Psi = B \begin{bmatrix}
\overline{Z_{J_N + 1}}(\mu_1, \omega_1) & \cdots & \overline{Z_{J_N + 1}}(\mu_{D_N}, \omega_{D_N}) \\
\vdots & \cdots & \vdots \\
\overline{Z_{J_{2N}}}(\mu_1, \omega_1) & \cdots & \overline{Z_{J_{2N}}}(\mu_{D_N}, \omega_{D_N})
\end{bmatrix} = B \ A_N^* .
\end{equation}
In the language of frame theory \cite{Chr03}, the $D \times D_N$ matrix $\Psi$ is the \textit{synthesis operator} of the discretized set of vectors $\{\psi_{N,j}\}_{j=1}^{D_N},$ and $\Psi \Psi^*$ is the \textit{frame operator} \cite{Chr03}. The dual wavelet functions, in discretized form, are then given by 
	$$
	\widetilde{\psi}_{N,j} = (\Psi \Psi^*)^{-1} \psi_{N,j} = (BA_N^* A_N B^*)^{-1} \psi_{N,j} . 
	$$

\section{Multiresolution Analysis}\label{SEC:MRA}

The scaling functions defined in Section~\ref{SEC:ScalingFunctions} give rise to a multiresolution analysis (MRA) of $L^2(B(0,1))$ that is different from the traditional MRA. Recall that $V_N$ is the subspace of all polynomials in $x,$ $y$ with degree at most $N$. This gives a one-sided MRA starting with $V_0$ that satisfies the following.

\begin{enumerate}[(i)]

\item $V_{2^j} \subset V_{2^{j+1}},$ $j \in \mathbb{N}_0,$
giving rise to the sequence of nested subspaces starting with $V_0$:
    $$
    V_0 \subset V_1 \subset \cdots \subset V_{2^j} \subset V_{2^{j+1}} \subset \cdots .
    $$

\item $V_0 \cap (\bigcap_{j \in \mathbb{N}_0} V_{2^j} ) = V_0 = \textrm{span}\{\mathds{1}_{B(0,1)} \},$ where $\mathds{1}_{B(0,1)}$ is the characteristic function of $B(0,1).$

\item $\overline{V_0 \cup (\bigcup_{j \in \mathbb{N}_0} V_{2^j}) } = L^2(B(0,1)).$

\item \textit{Dilation:} $p(x,y) \in V_{2^j} \Leftrightarrow p(x^2, y^2) \in V_{2^{j+1}}$
\\
$f \in V_n \Leftrightarrow \langle f, Z^m_k \rangle = 0$ for all $k > n.$

\item \textit{Translation:} $\{\phi_{N,r}\}_{r=1}^J$ based on a parameter set of regular  points (Definition~\ref{DEF:SpecialPoints}) is a basis of $V_N,$ where $N = 2^j$ for some $j \in \mathbb{N}_0$ and $J = \frac{(N+1)(N+2)}{2}.$
\end{enumerate}
\noindent
The last property, determined by the choice of the regular points, takes the role of integer translates in a standard MRA. For a given function $f$ and a given $N$ that is a power of $2$, one can obtain the best approximation of $f$ in the space $V_N$ by considering the orthogonal projection of $f$ on $V_N.$ Call this $f_{P_N}.$ We can write
    \begin{equation} \label{EQ:WaveletDecomposition}
    V_N = V_{\frac N2} \oplus W_{\frac N2} = V_{\frac N4} \oplus W_{\frac N4} \oplus W_{\frac N2} = \cdots = V_0 \oplus W_0 \oplus \cdots \oplus W_{\frac N2}.
    \end{equation}
Note that $V_0$ is spanned by constant polynomials, and we take wavelet functions that form a basis for the spaces $W_j.$ Thus (\ref{EQ:WaveletDecomposition}) implies that 
%$j = 0, 1, 2, 4, \ldots, \frac{N}{2}.$
one can write $f_{P_N}$ as a linear combination of the wavelets. Taking higher values of $N$ will make $f_{P_N}$ a better approximation of $f.$

At this point, we recollect the advantages of the wavelet representation over the Zernike basis representation ((\ref{EQ:ZernikeFourierExp}), Corollary~\ref{COR:ONB_PI_N}). Note that the degrees of the polynomials represent the frequencies; see also property (iv) above. When $f$ is represented in terms of the Zernike polynomials, a truncation of the series (\ref{EQ:ZernikeFourierExp}) may give a good enough approximation of $f,$ depending on the number of terms taken; however, one does not have any knowledge of the location of the frequencies corresponding to the coefficients. For a fixed $N,$ the scaling functions give \textit{localized} bases of $V_N,$ but the scaling functions being kernel polynomials, are composed of \textit{all} Zernike polynomials upto a certain degree and hence approximating $f$ by a linear combination of scaling functions does not give enough information on the frequencies present. However, as seen in (\ref{EQ:WaveletDecomposition}), the approximation of $f$ in $V_N,$ when expressed as a sum of components in the $W_j$s, will give information on the various frequencies present as well as the location of the frequencies. More precisely, in the wavelet representation, the degrees of the polynomials contained in each $W_j$ gives the frequency information, whereas the location of the frequencies comes from the parameter points used to construct the wavelet functions.

\section{Numerical results and discussion} \label{SEC:NumResults}
In this section we show some experimental results that demonstrate the use of wavelets constructed using Zernike polynomials. The data used comes from normal subjects, those with corneal astigmatism, and those with keratoconus. A Medmont E300 videokeratoscope was used to obtain the data.\footnote{We thank Dr. D. Robert Iskander for kindly providing the data.} 
%Currently I use Medmont E300 videokeratoscope and I can offer you a variety of data from normal subjects, those with corneal astigmatism, those with keratoconus or even some really strange post-surgical cases. 
%We take points on the surface of the upper hemisphere of the unit sphere. 
The data points for each subject are stored as a vector $C$ of size $D.$ For the data used, $D = 10200.$
%and give a crude and synthetic representation of a corneal surface. 
This data, taken from the right eye of a subject with astigmatism, is shown on the top left of Figure~\ref{FIG:Approximation}. 
To represent this data as an approximation in some $V_N,$ in terms of Zernike polynomials, we first fix $N.$ For a given $N,$ there are $J = (N+1)(N+2)/2$ Zernike polynomials with radial degree at most 
$N$ that form a basis of $V_N.$ We can write \cite{Iskander2002}
    $$
    C = B a,
    $$
where $B$ is a $D \times J$ matrix whose columns are the $J$ Zernike polynomials (sampled at the $D$ points), and $a$ is a vector of coefficients. 
%For demonstration, we have taken $D = 2000.$ 
 Using the method of least-squares, the coefficient vector $a$ can be estimated as
    $$
    \widehat{a} = (B^{\textrm{\tiny{T}}} B)^{-1} B^{\textrm{\tiny{T}}} C.
    $$
The approximation of the data $C$ in the space $V_N$ is
    $$
    \widehat{C} = B \widehat{a}.
    $$
To start with, we take $N = 8,$ and note that any other power of $2$ can be taken; the higher the power, the better the approximation. The approximation of the (astigmatism) elevation data $C$ in the space $V_8$ is shown on the top right of Figure~\ref{FIG:Approximation}. Next, we do the same with $N$ replaced by $\frac N2,$ i.e., $4$ in this case. This gives an approximation of $C$ in the space $V_4.$ The difference between the two levels of approximation will belong to the space $W_{\frac N2}$ which is $W_4$. These are shown in the bottom row of Figure~\ref{FIG:Approximation}.

Using the above idea, for a given $N$ that is a power of $2$, one can obtain approximations in the spaces $V_N,$ $V_{\frac N2},$ $V_{\frac N4},$ and so on. We can write
    \begin{equation} \label{EQ:WaveletDecomposition2}
    V_N = V_{\frac N2} \oplus W_{\frac N2} = V_{\frac N4} \oplus W_{\frac N4} \oplus W_{\frac N2} = \cdots = V_0 \oplus W_0 \oplus \cdots \oplus W_{\frac N2}.
    \end{equation}
Note that $V_0$ is spanned by constant polynomials, and the choice of $N=8$ gives $J = 45$ which is the dimension of the subspace $V_8$ as well as the number of Zernike polynomials used to represent $V_8.$ This means that the matrix $B$ above is of size $10200 \times 45$. The dimension of the subspace $V_4$ is $15$ and so the wavelet space $W_4$ is of dimension $45 - 15 = 30.$ The next step is to represent the differences in the $W_j$s, i.e., the wavelet spaces, in terms of the wavelet basis functions mentioned in Section~\ref{SEC:Wavelets}. 
For the purpose of experimentation, for each $j$, the wavelet basis functions for $W_j$ are constructed using $D_j$ points that are a \textit{random} subset of  
%The points needed to construct the wavelet basis functions for $W_j$ is taken to be a random subset of 
the regular points used to construct scaling functions for $V_{2j}$ (Definition~\ref{DEF:SpecialPoints}). 
%, and discussed in Theorem~\ref{THM:LI_Wavelets}. 
As indicated by (\ref{EQ:WaveletDecomposition2}), we can then represent our approximation in the space $V_N,$ for a given $N,$ in terms of the wavelet functions only. Including the constant function needed for $V_0,$ the number of wavelet functions needed for $N = 8$ would also be $45.$ The reconstruction of the elevation data of Figure~\ref{FIG:Approximation} using wavelet functions, the Zernike coefficients, and the wavelet coefficients are shown in Figure~\ref{FIG:WaveletCoefsAstig}. The bottom row in Figure~\ref{FIG:WaveletCoefsAstig} shows the wavelet coefficients (left) and the difference of the elevation data of Figure~\ref{FIG:Approximation} with the best-fit-sphere of that data (right). The difference with the best-fit-sphere (BFS) indicates the ridges and undulations in the data. By looking at the wavelet coefficients one can determine the locations where the data changes with higher frequencies. The norm of the difference between the elevation data and the approximation using Zernike polynomials as well as using wavelet functions is $0.07. $

Similar experiments have been performed with data from the right eyes of normal subjects and subjects with keratoconus using $N = 8$. An example of each is shown in Figures~\ref{FIG:Approximation2} -  \ref{FIG:WaveletCoefsKera}. The norm of the difference between the elevation data and the approximation using Zernike polynomials as well as using wavelet functions is $0.0682$ in the normal case and $0.1006$ for the subject with keratoconus.
%The norm of the difference between the elevation data and the approximation using Zernike polynomials as well as using wavelet functions is $0.1006$ for the subject with keratoconus. 
One would expect better results giving more information when $N$ is larger. For $N = 16,$ the dimension of $V_{16}$ is $153$ which is the same as the number of wavelet functions to be used for this value of $N.$ See Figure~\ref{FIG:WaveletCoefsAstigN16} for the subject with astigmatism discussed earlier in  Figure~\ref{FIG:Approximation} and Figure~\ref{FIG:WaveletCoefsAstig}. 
Now there are many more wavelet coefficients and consequently one has a better knowledge of locations of fluctuations in spatial frequency. 
The norm of the difference between the elevation data and the approximation using Zernike polynomials as well as using wavelet functions is now $0.05.$ In comparing the wavelet coefficients obtained from $N = 8$ and $N = 16,$ it is worth recalling that the wavelet functions for $W_N$ are constructed from a random subset of points that are used to construct the scaling functions for $V_{2N}$. Each trial, with a given data set and a particular $N,$ uses a different set of wavelets functions, and this will also effect the location of the most significant wavelet coefficients. What is seen in Figures~\ref{FIG:WaveletCoefsAstig}, \ref{FIG:WaveletCoefsNormal}, \ref{FIG:WaveletCoefsKera}, and \ref{FIG:WaveletCoefsAstigN16} is just from a single trial. However, one can see that for the same astigmatism data with $N = 8$ and $N = 16,$  the one for $N=16$ (Figure~\ref{FIG:WaveletCoefsAstigN16}) has a lot more wavelet coefficients and hence gives much more information than the corresponding result for $N = 8$ (Figure~\ref{FIG:WaveletCoefsAstig}).
The difference between two different trials will be less noticeable as $N$ increases and more and more points are used. The results will be more meaningful with higher values of $N$ at the cost of more computational time. 
Table~\ref{TAB:NumericalResults} gives a summary of the results shown in the figures. It is seen that the approximation error when reconstructing using the Zernike basis is the same as when reconstructing using wavelet functions, at least for the cases that we have used. We would like to emphasize that the Fourier-Zernike series (\ref{EQ:ZernikeFourierExp}) only gives us the magnitude of the frequencies without any knowledge of where the frequencies occur. On the other hand, each wavelet coefficient can be associated with certain frequencies as well as a location coming from the parameter point used to construct the corresponding wavelet function. 
This information may be used to determine conditions such as astigmatism more efficiently by a wavelet analysis; the spatial-frequency behavior of a subject with some condition such as astigmatism may be different from that of a normal subject. However, actually detecting such conditions through wavelet analysis is beyond the scope of this paper, partly due to limited computer power at our disposal. We just aim to demonstrate the theory.
 
 Apart from the modeling of corneal elevation as illustrated here, there are a variety of other fields having circular boundary where the present theoretical results can be applied. For example, the wavelet analysis done here may also be used in feature extraction such as detection of circular shapes in images or textures in medical imaging \cite{FAROKHI2014, Kamal2016}.
% , to identify the center of objects with circular symmetry, even in noisy backgrounds, and so on. 
 %In image processing, such as painting, they can be used to fill in missing regions in circular mask-like areas of images, remove noise from images, in image compression, to reconstruct images from underpaintings in paintings and to remove cracks from digital images, also in other areas. 
%Wavelet transforms offer a powerful tool for 
This can also be used in processing radar signals, particularly in the context of circular radar beams, enabling more accurate target detection \cite{Cle2015}.
\begin{table}[ht]
\caption{Reconstruction of corneal data} % title of Table
\centering % used for centering table
\begin{tabular}{c c c c c} % centered columns (4 columns)
\hline\hline %inserts double horizontal lines
Condition & Degree & No. of polynomials & Norm of approx. error & Norm of approx. error\\
& N & J & Zernike basis &  Wavelet functions \\ [0.5ex] % inserts table
%heading
\hline 
Astigmatism & 8 & 45 & 0.07 & 0.07 \\ % inserts single horizontal line
% inserting body of the table
Normal & 8 & 45 & 0.0682 & 0.0682 \\
Keratoconus & 8 & 45 & 0.1006 & 0.1006 \\
Astigmatism & 16 & 153 & 0.05 & 0.05  \\ [1ex] % [1ex] adds vertical space
\hline %inserts single line
\end{tabular}
\label{TAB:NumericalResults} % is used to refer this table in the text
\end{table}

%\begin{figure}[!h]
%\centering
%\includegraphics[width=.4 \textwidth]{ElevationData.eps}
%\includegraphics[width=.4\textwidth]{ApproxV8.eps}
%\includegraphics[width=.4 \textwidth]{ApproxV4.eps}
%\includegraphics[width=.4 \textwidth]{DiffW4.eps}
%\caption{Approximation of elevation data.}
%\label{FIG:Approximation}
%\end{figure}

%% Astigmatism
\begin{figure}[!h]
\centering
\includegraphics[width=.45 \textwidth]{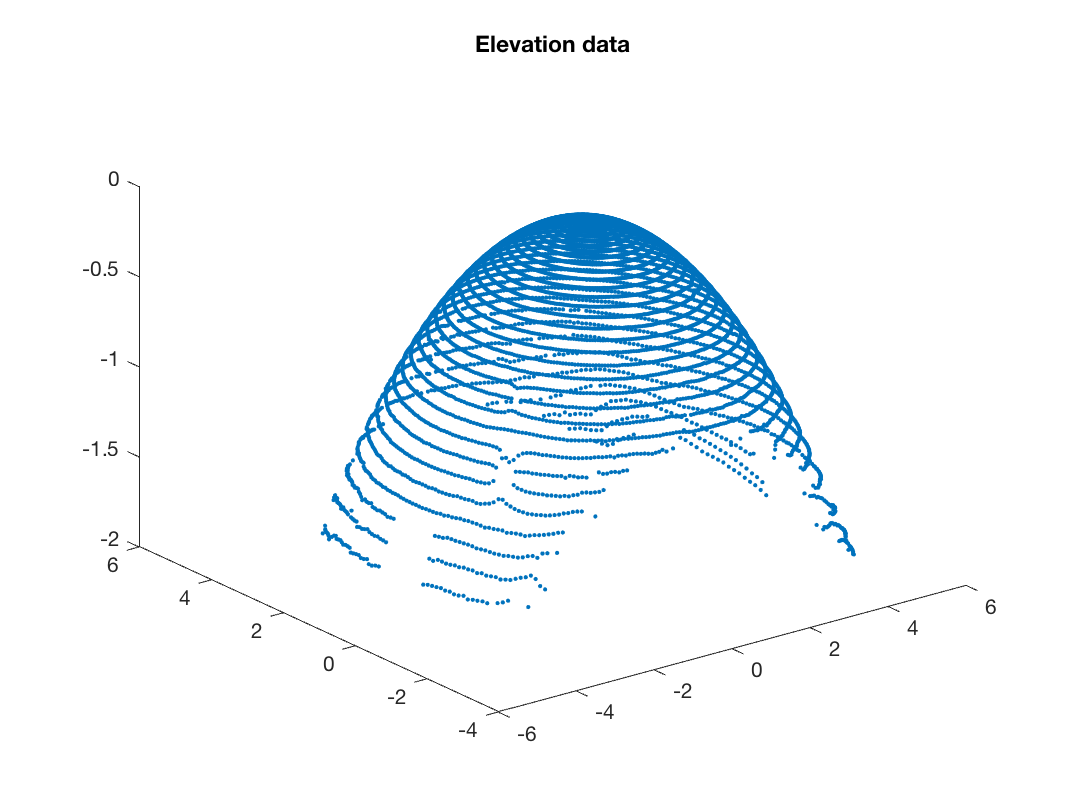}
\includegraphics[width=.45\textwidth]{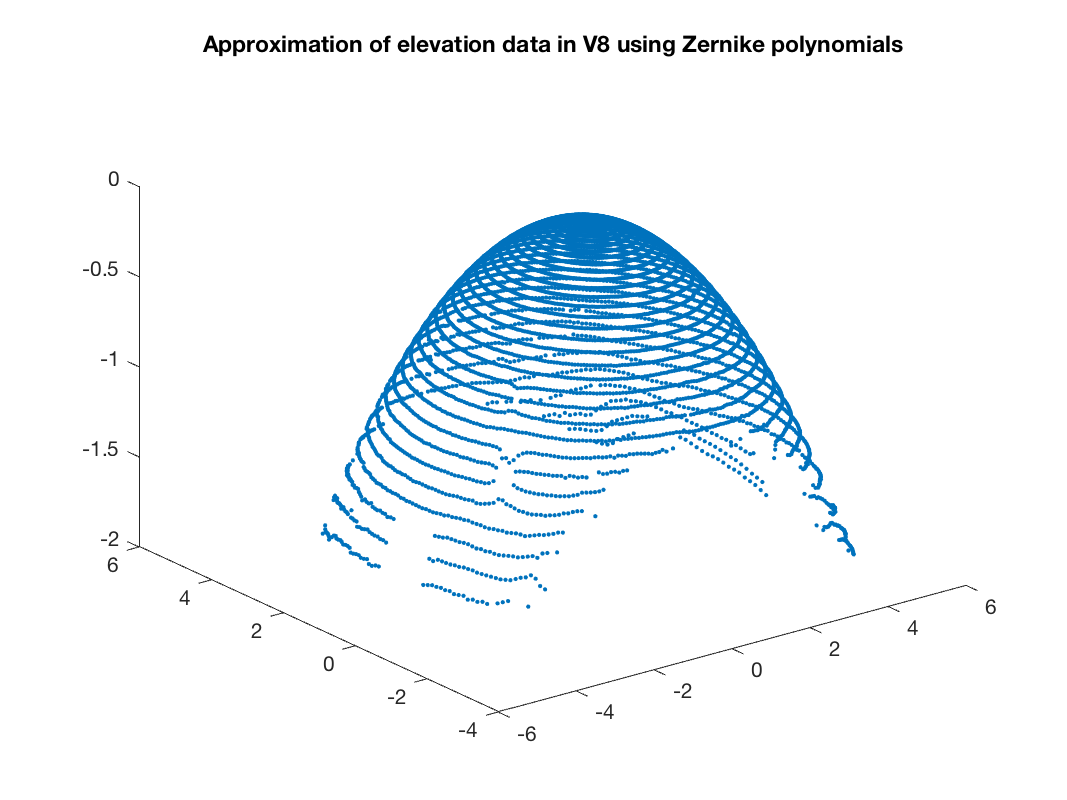}
\includegraphics[width=.45 \textwidth]{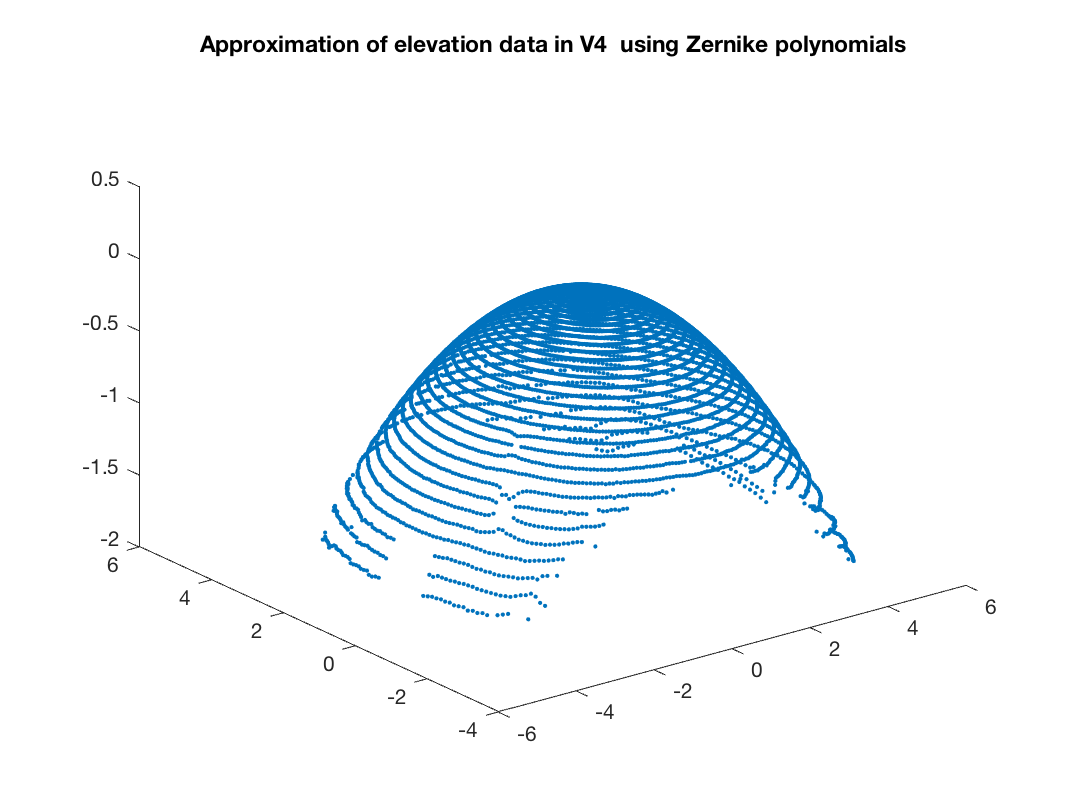}
\includegraphics[width=.45 \textwidth]{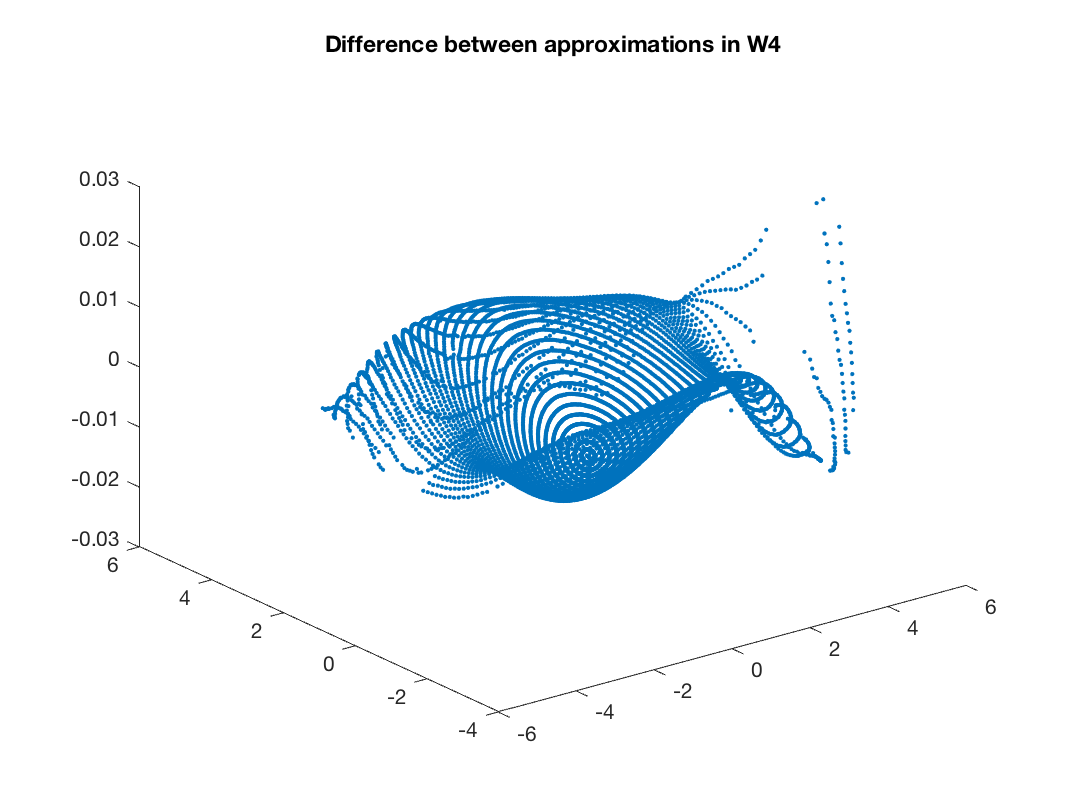}
\caption{Approximation of elevation data of a subject with astigmatism.}
\label{FIG:Approximation}
\end{figure}

\begin{figure}[!h]
\centering
\includegraphics[width=.45 \textwidth]{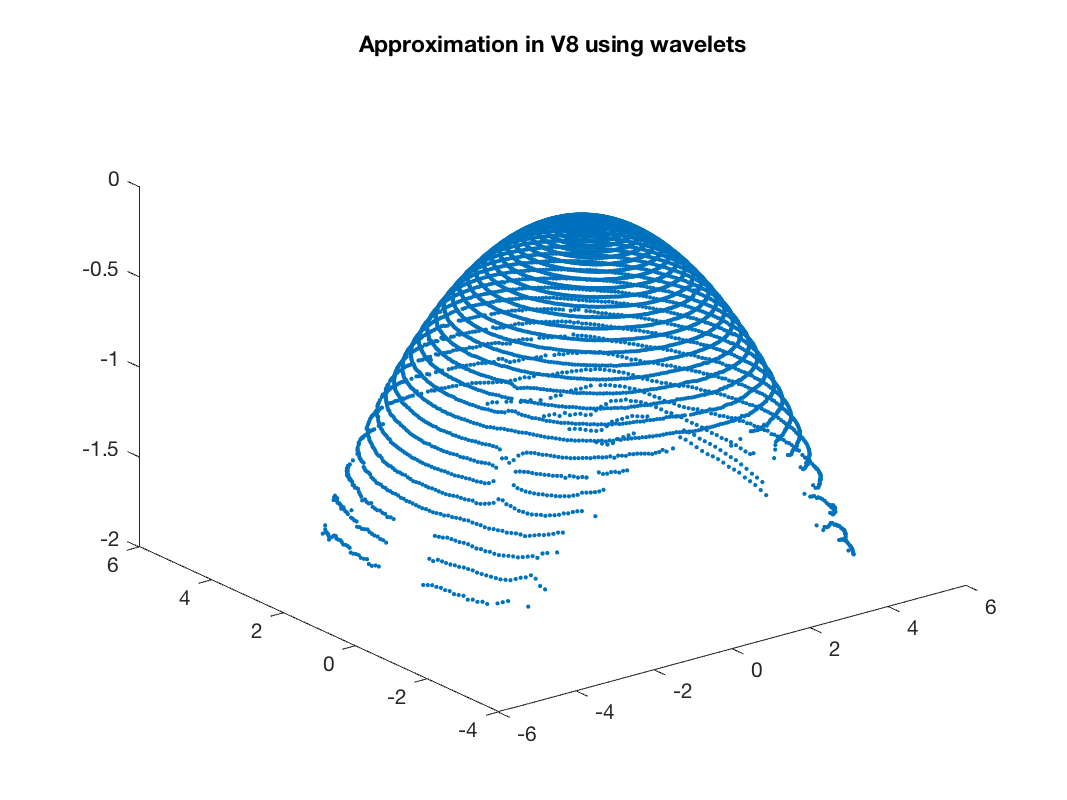}
\includegraphics[width=.45 \textwidth]{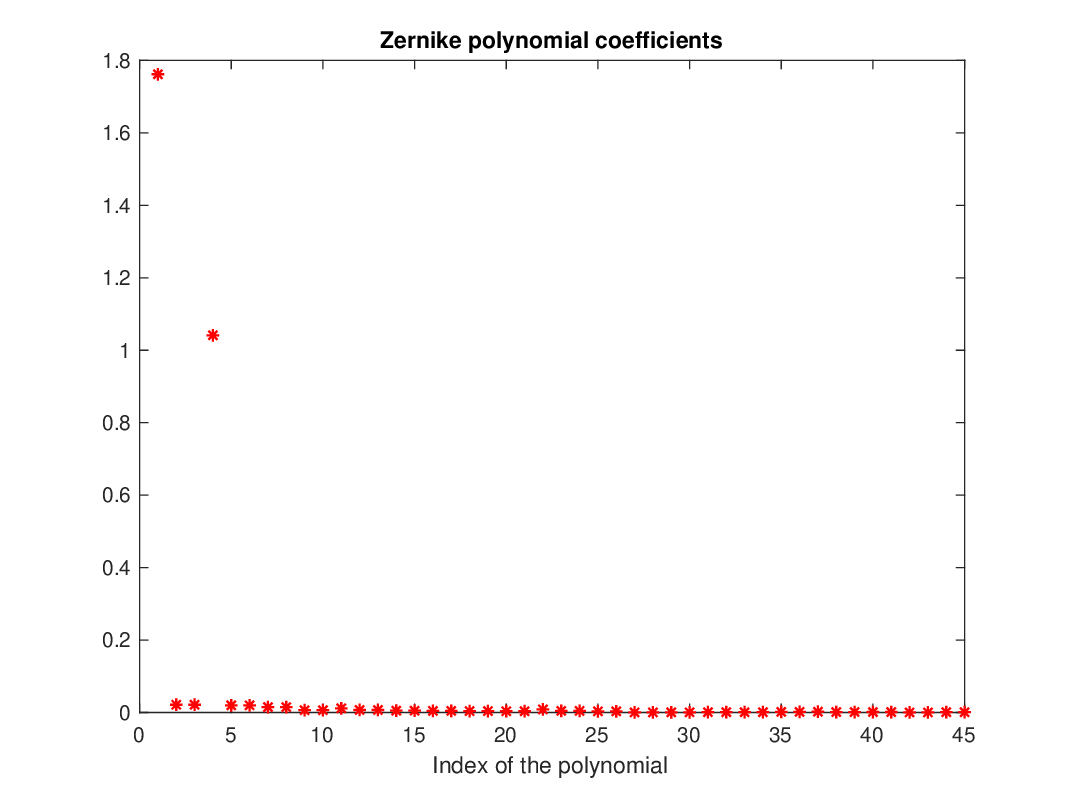}
\includegraphics[width=.45\textwidth]{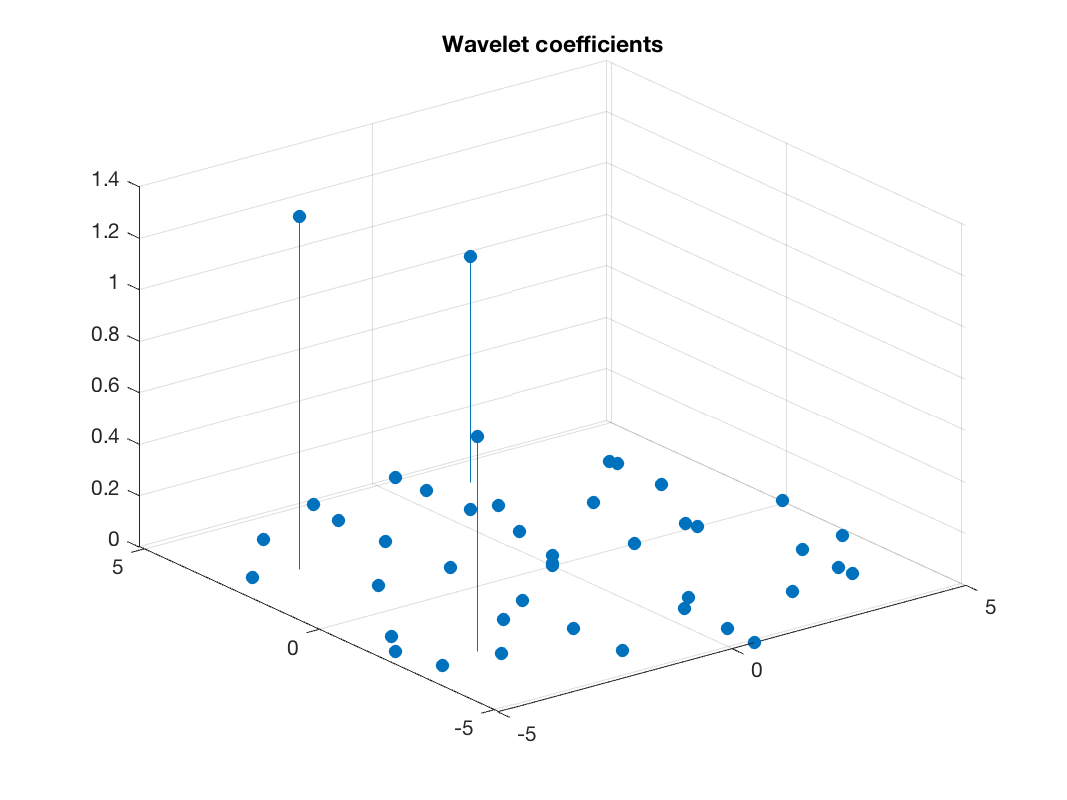}
\includegraphics[width=.45 \textwidth]{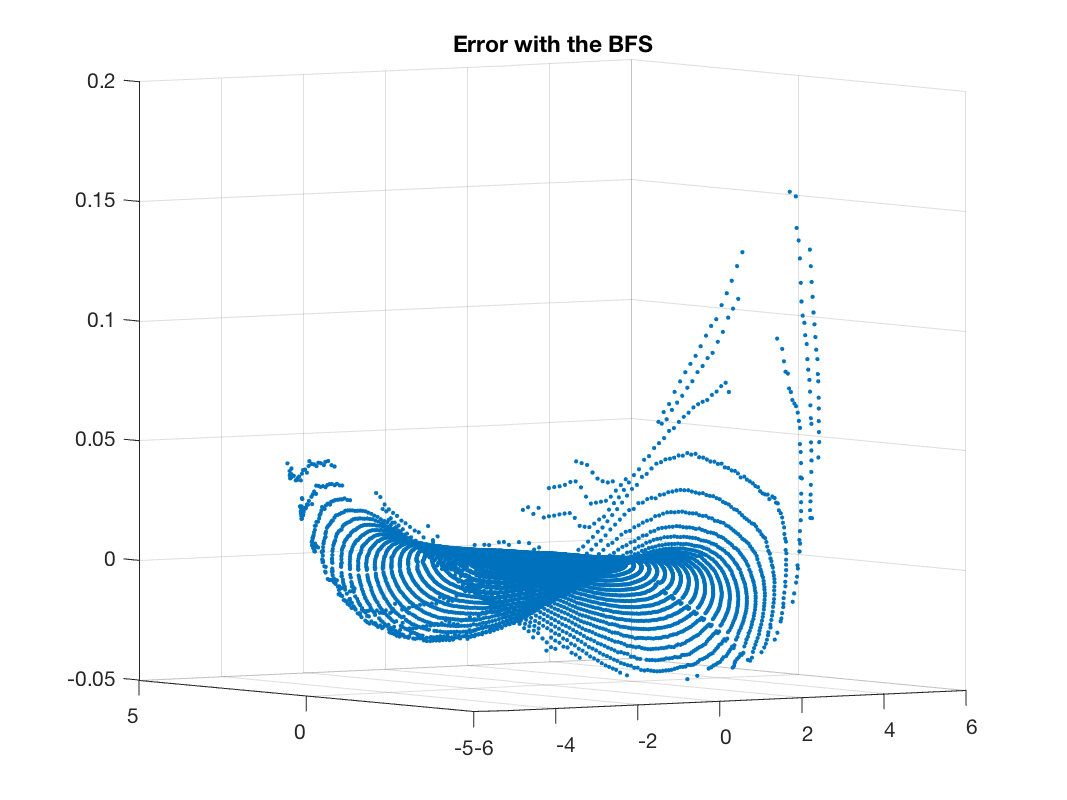}
\caption{Wavelet and Zernike coefficients of the astigmatism data of Figure~\ref{FIG:Approximation}}
\label{FIG:WaveletCoefsAstig}
\end{figure}

%% Normal case
\begin{figure}[!h]
\centering
\includegraphics[width=.45 \textwidth]{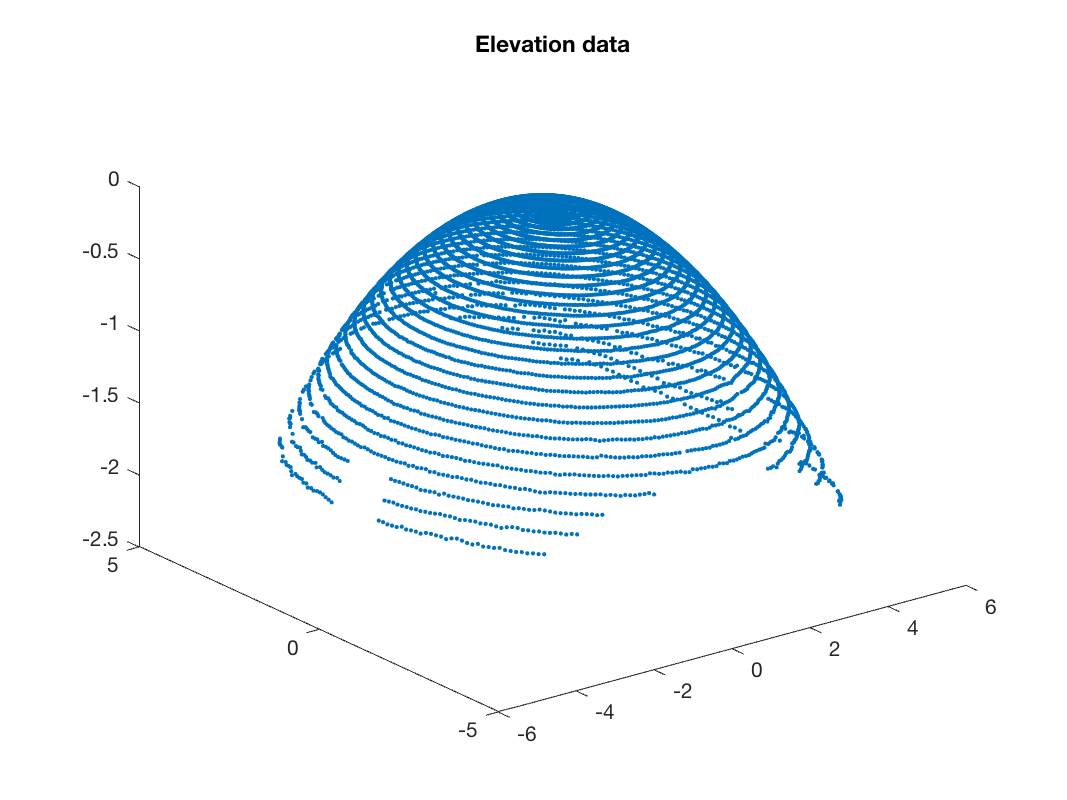}
\includegraphics[width=.45\textwidth]{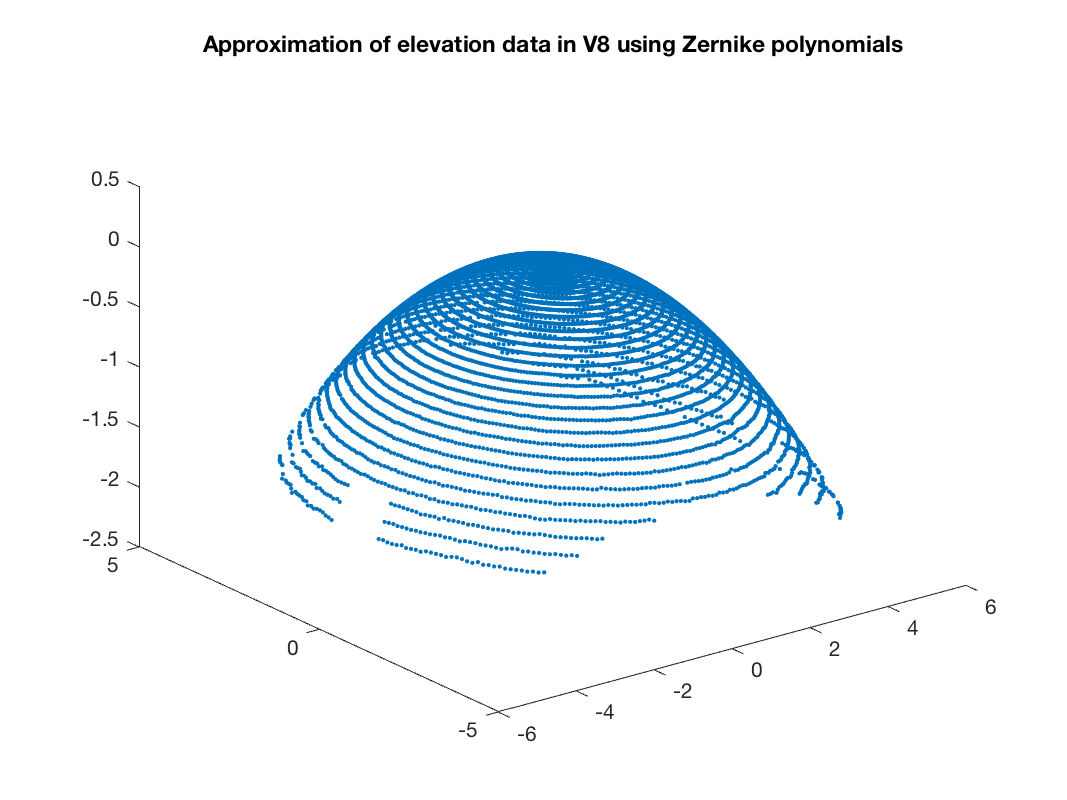}
\includegraphics[width=.45 \textwidth]{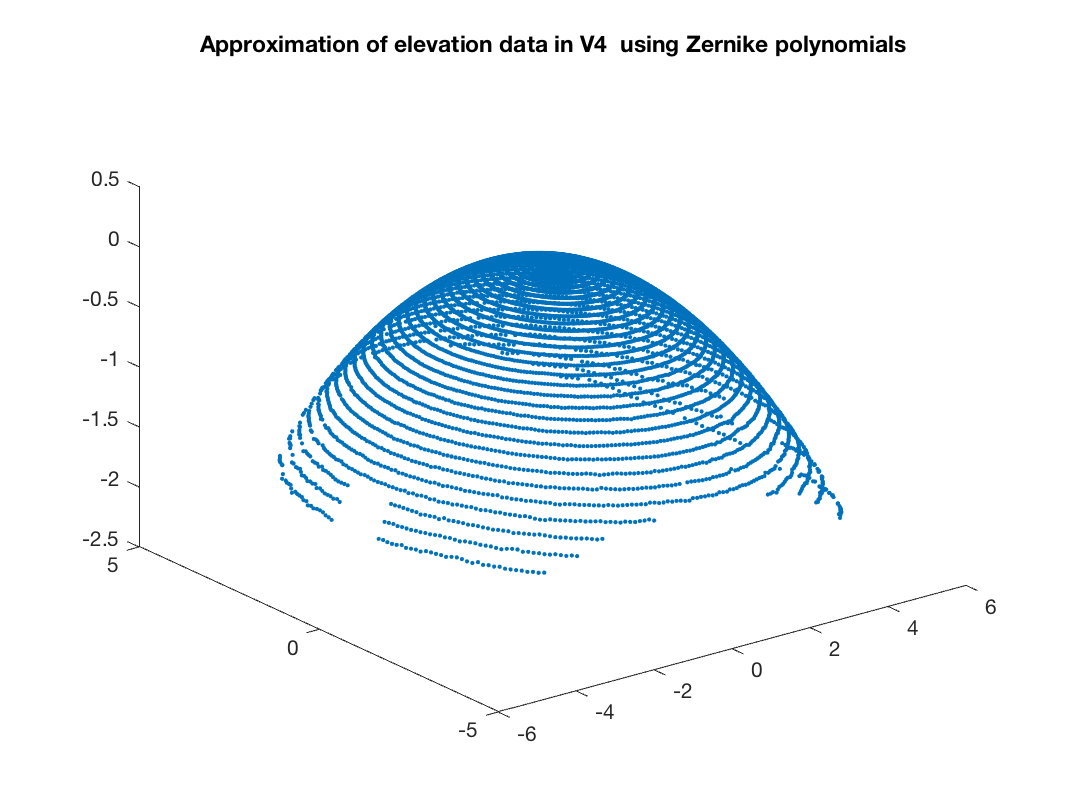}
\includegraphics[width=.45 \textwidth]{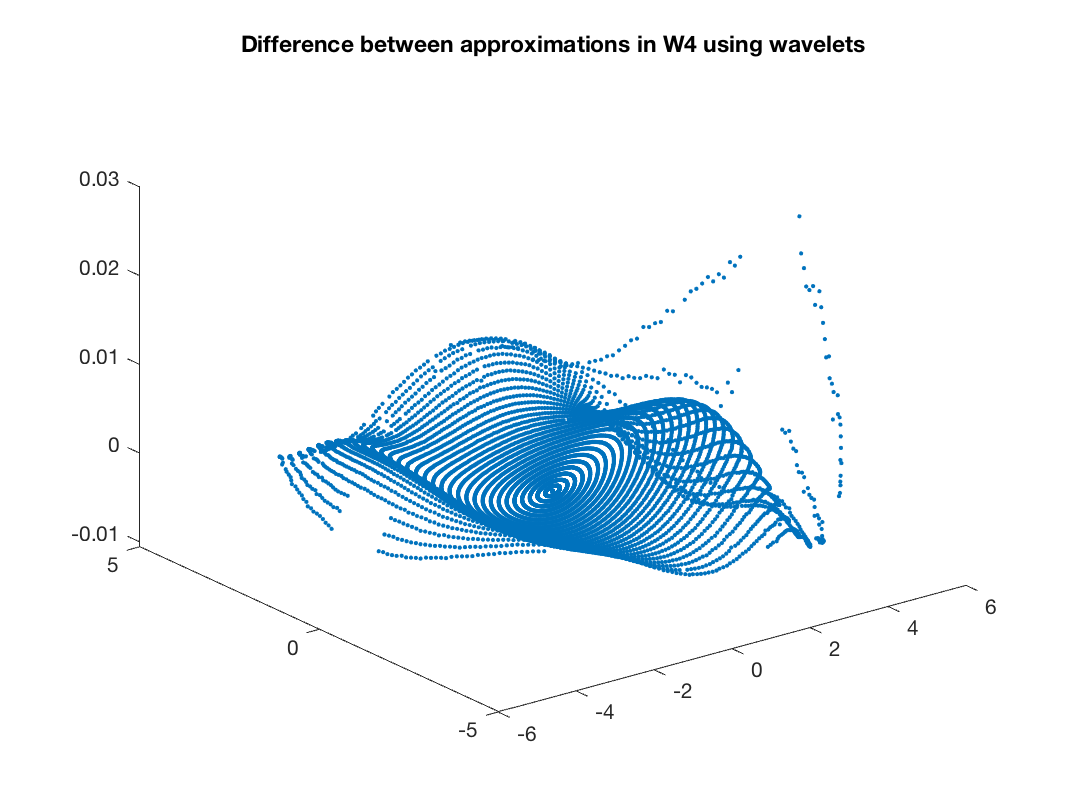}
\caption{Approximation of elevation data of a normal subject.}
\label{FIG:Approximation2}
\end{figure}

\begin{figure}[!h]
\centering
\includegraphics[width=.45 \textwidth]{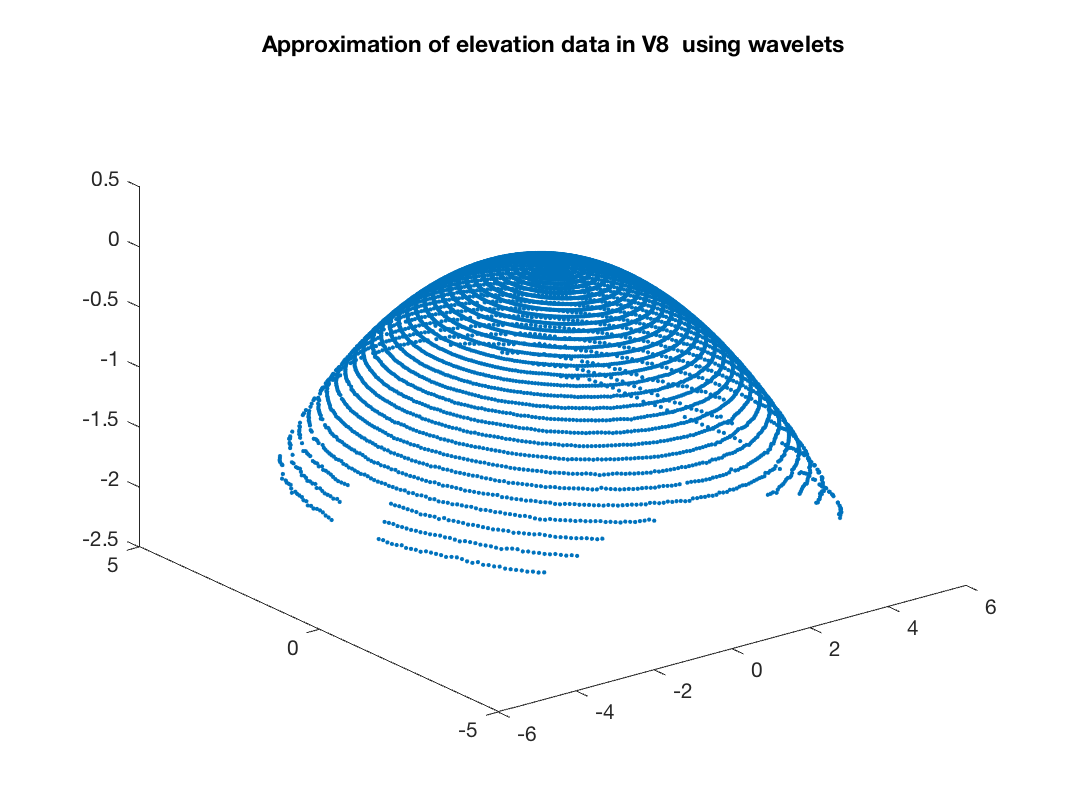}
\includegraphics[width=.45 \textwidth]{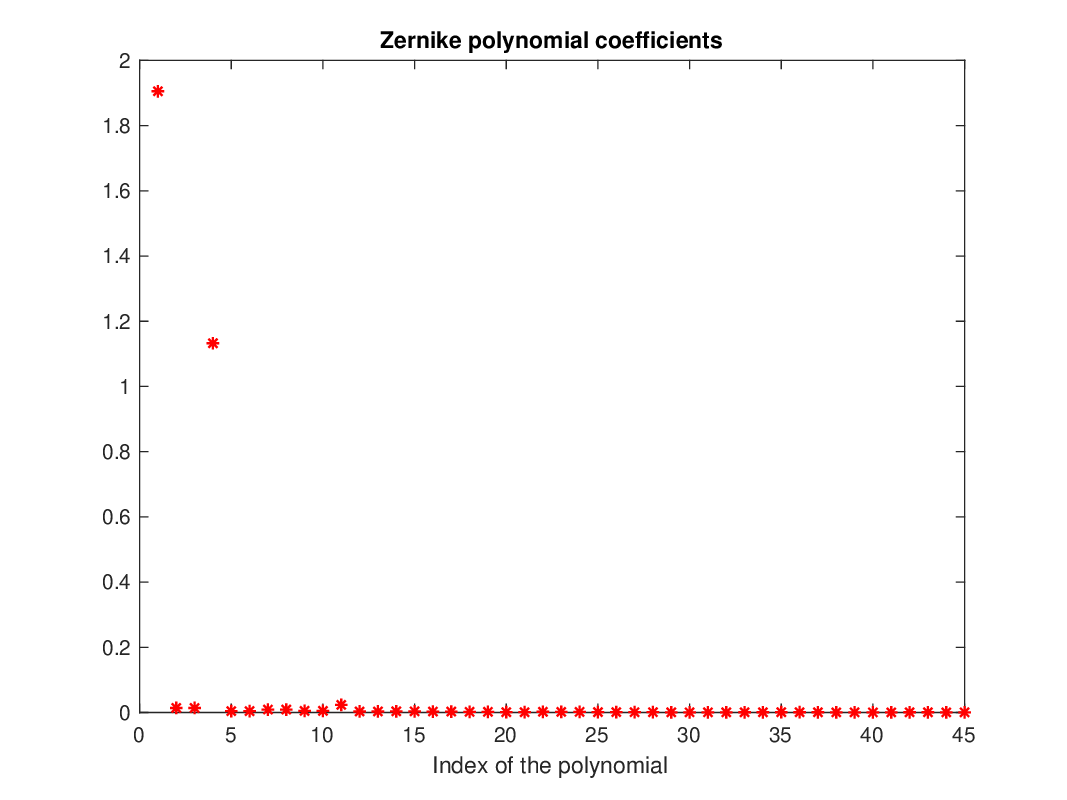}
\includegraphics[width=.45\textwidth]{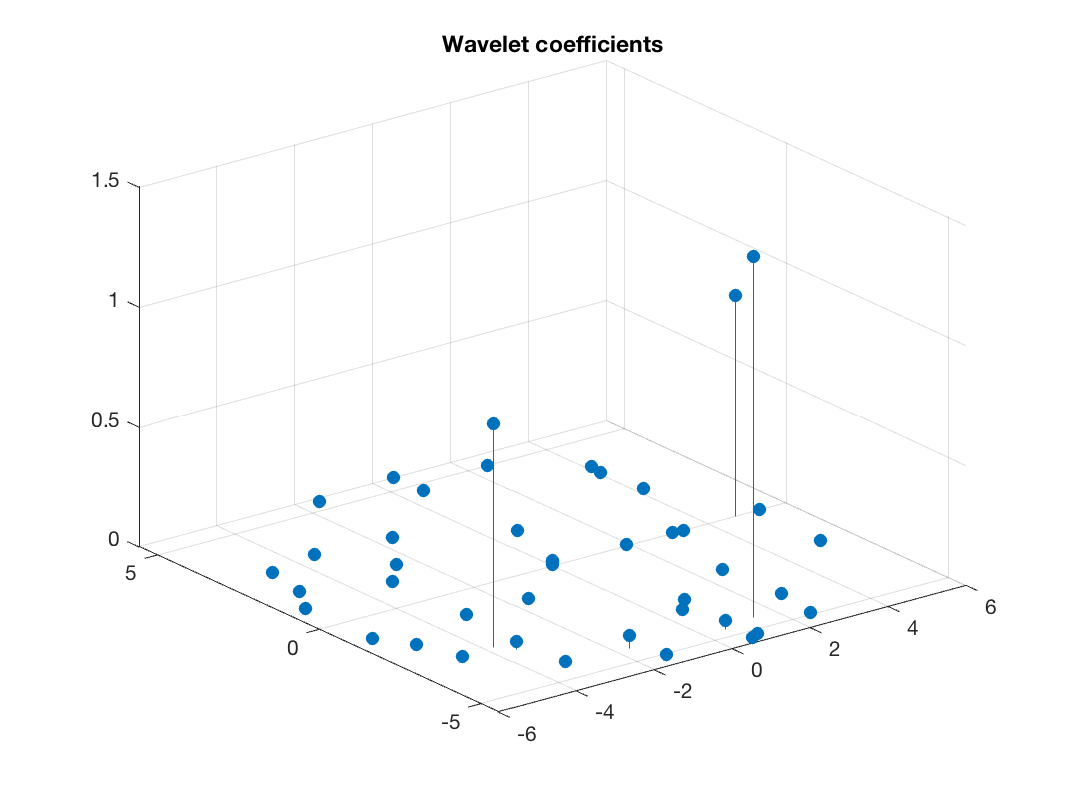}
\includegraphics[width=.45 \textwidth]{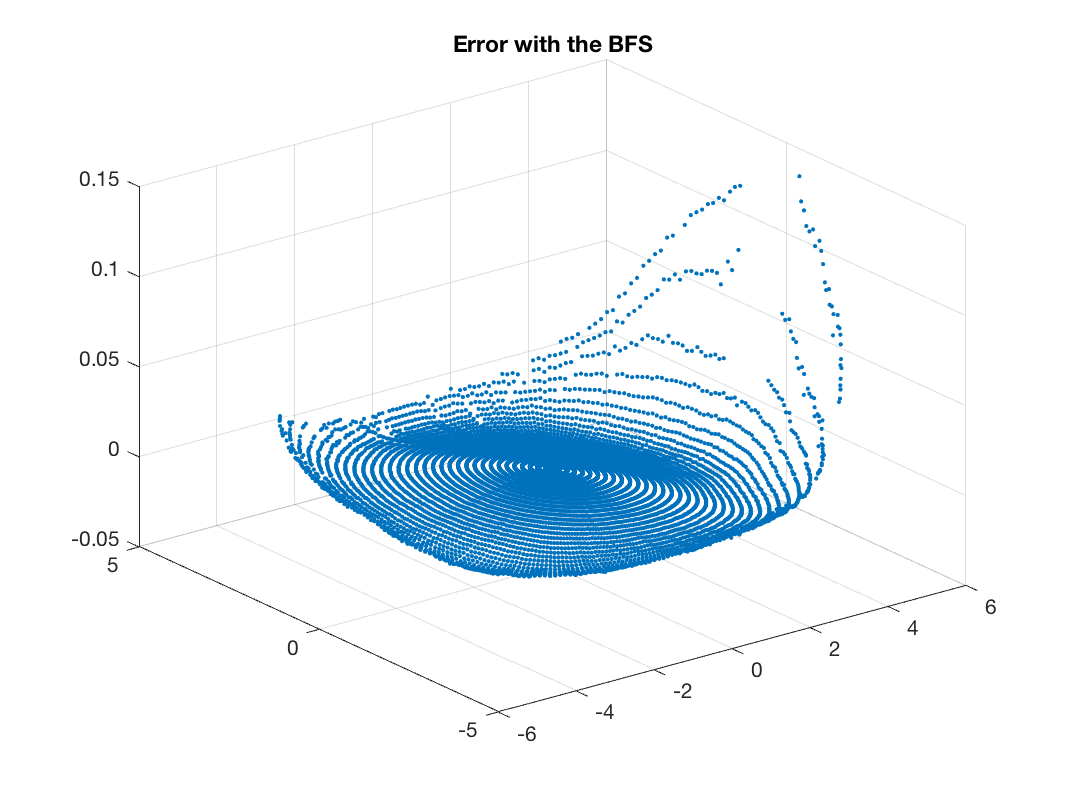}
\caption{Wavelet and Zernike coefficients of the data from a normal subject of Figure~\ref{FIG:Approximation2}}
\label{FIG:WaveletCoefsNormal}
\end{figure}

%% Keratoconics
\begin{figure}[!h]
\centering
\includegraphics[width=.45 \textwidth]{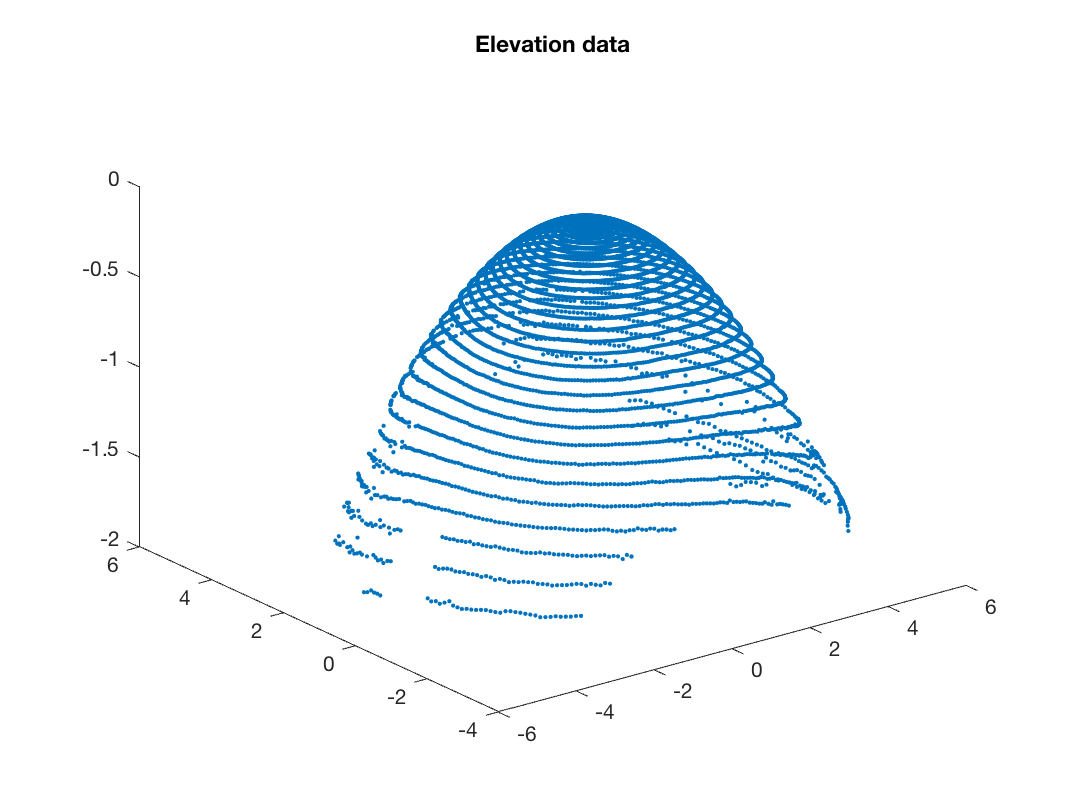}
\includegraphics[width=.45\textwidth]{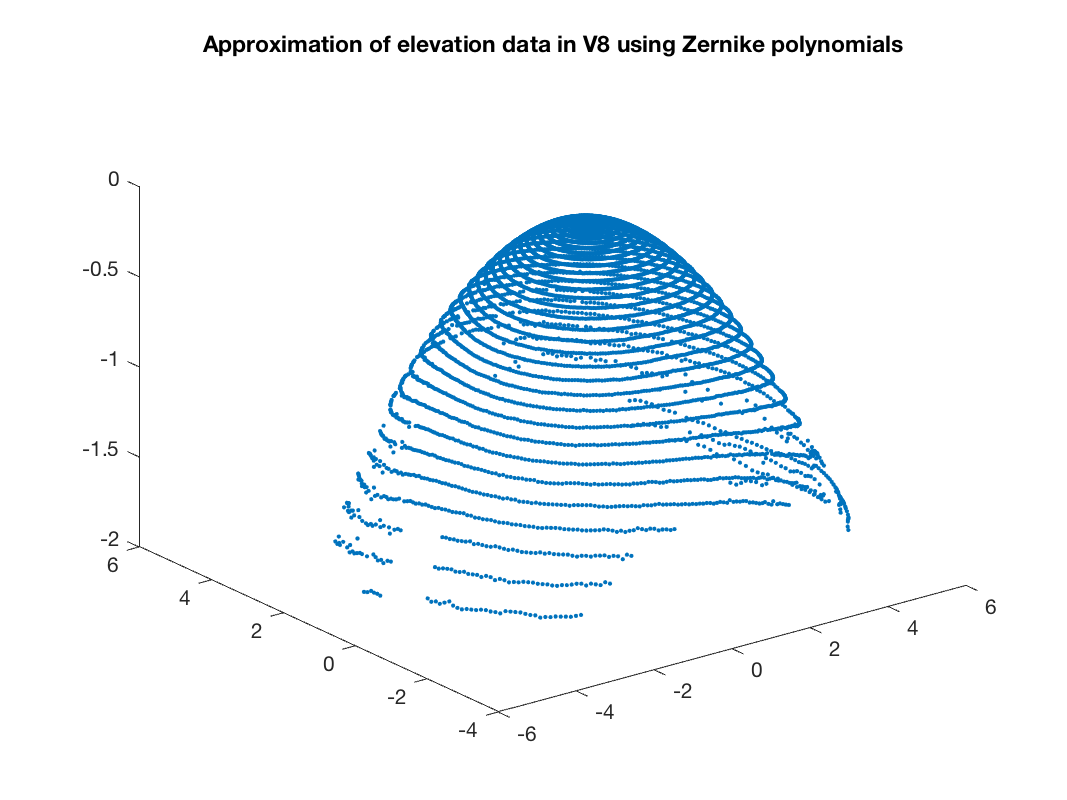}
\includegraphics[width=.45 \textwidth]{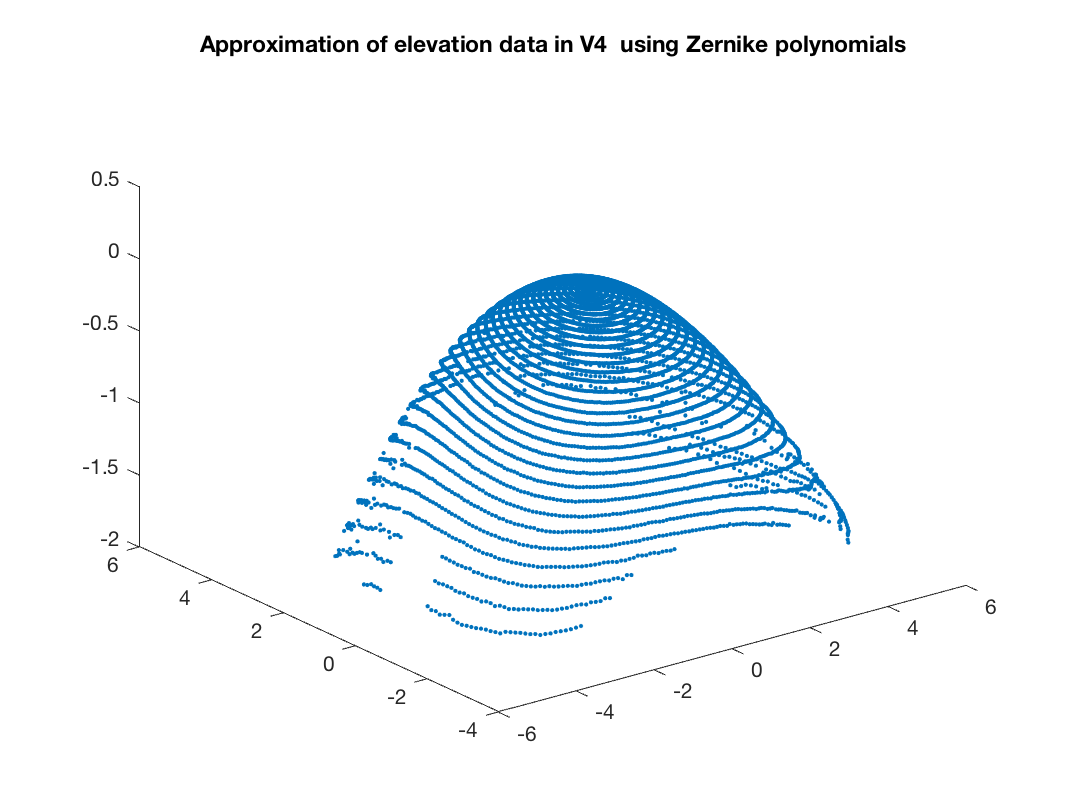}
\includegraphics[width=.45 \textwidth]{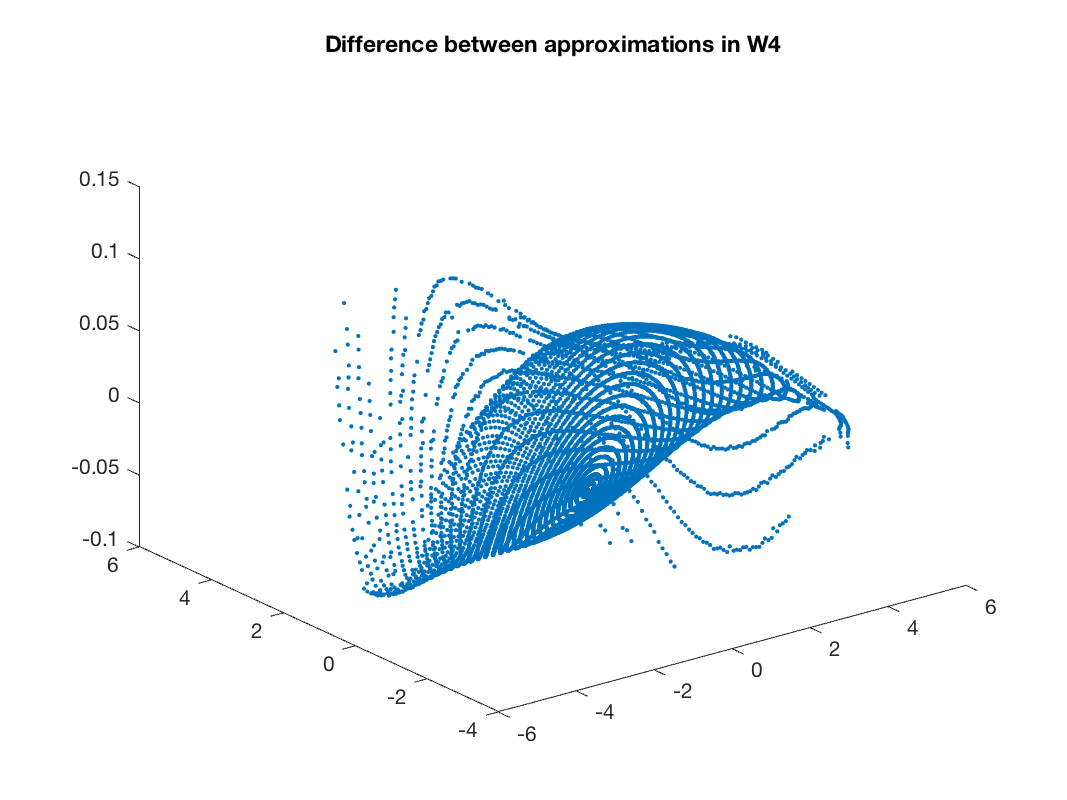}
\caption{Approximation of elevation data of a subject with keratoconus.}
\label{FIG:Approximation3}
\end{figure}

\begin{figure}[!h]
\centering
\includegraphics[width=.45 \textwidth]{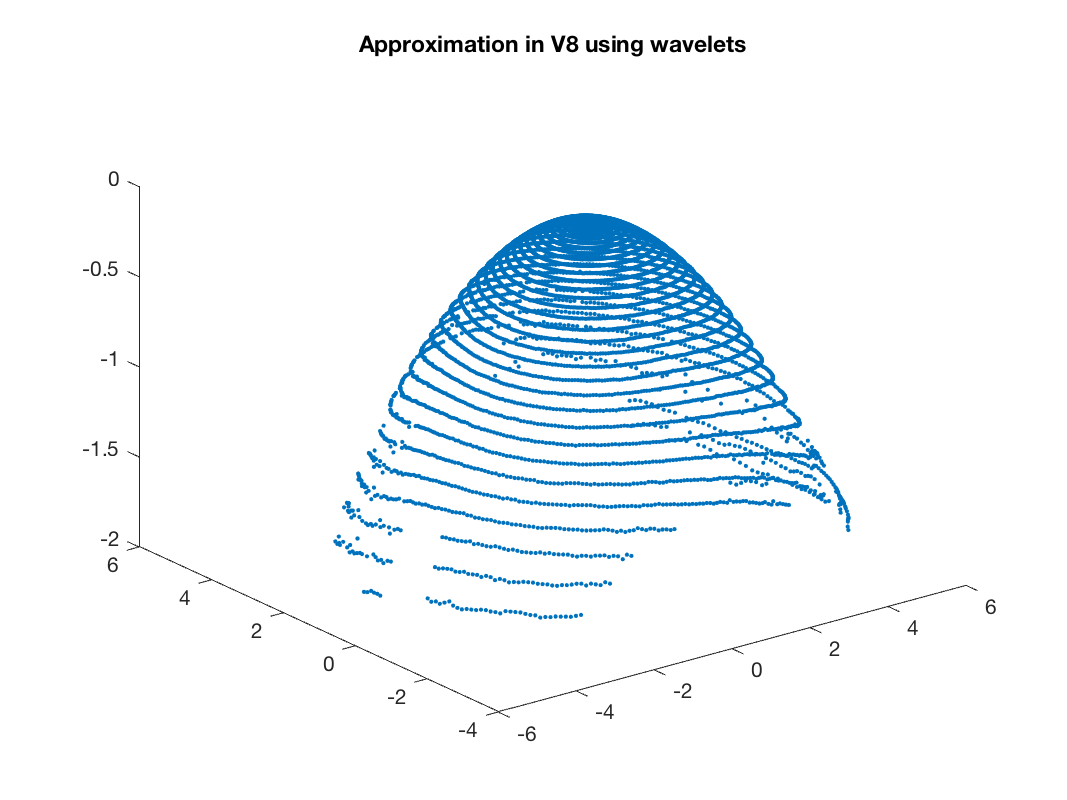}
\includegraphics[width=.45 \textwidth]{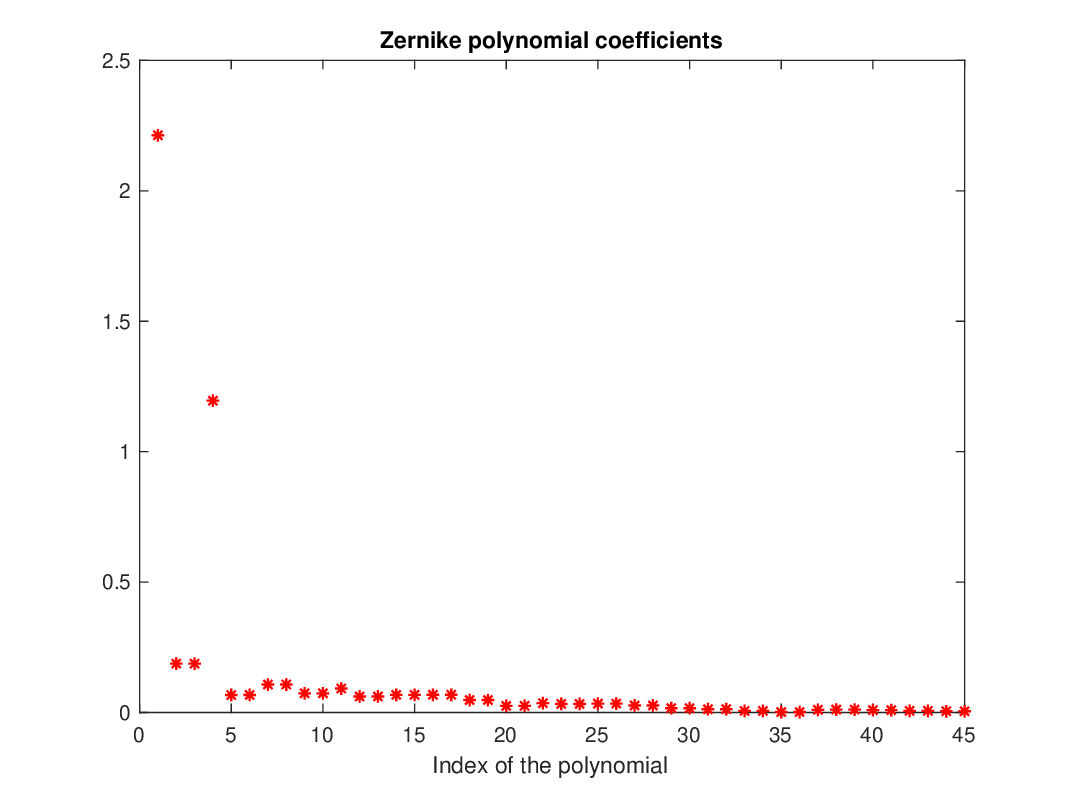}
\includegraphics[width=.45\textwidth]{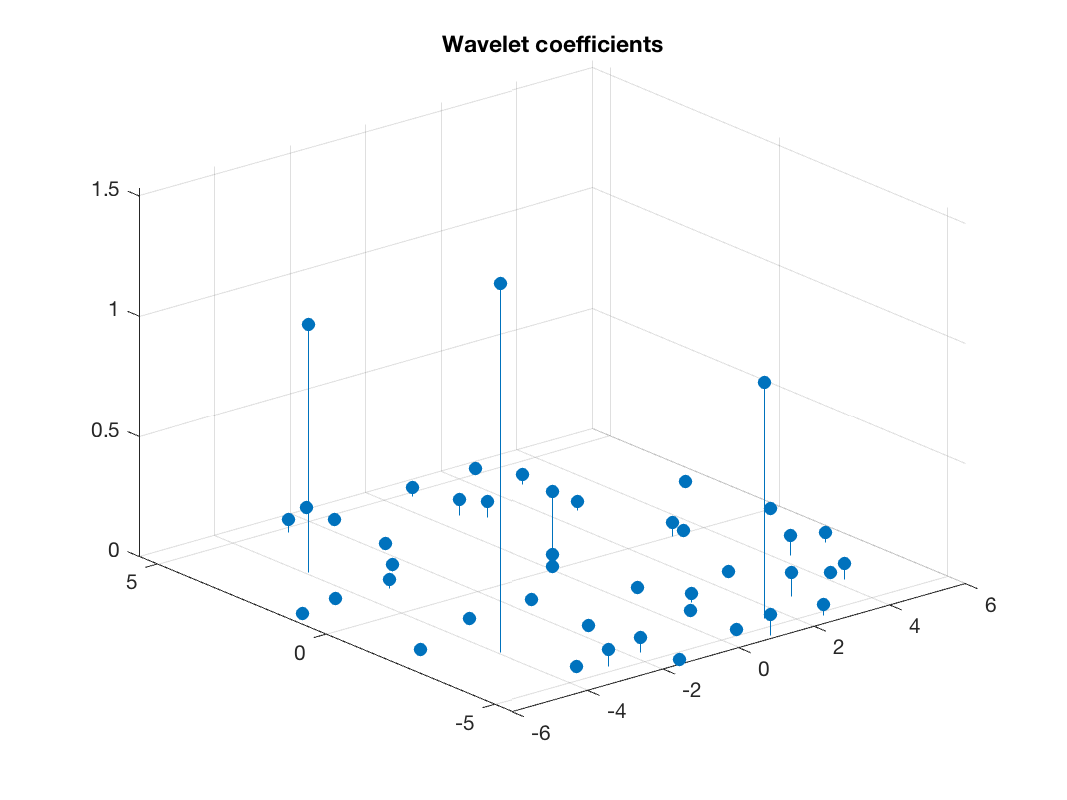}
\includegraphics[width=.45 \textwidth]{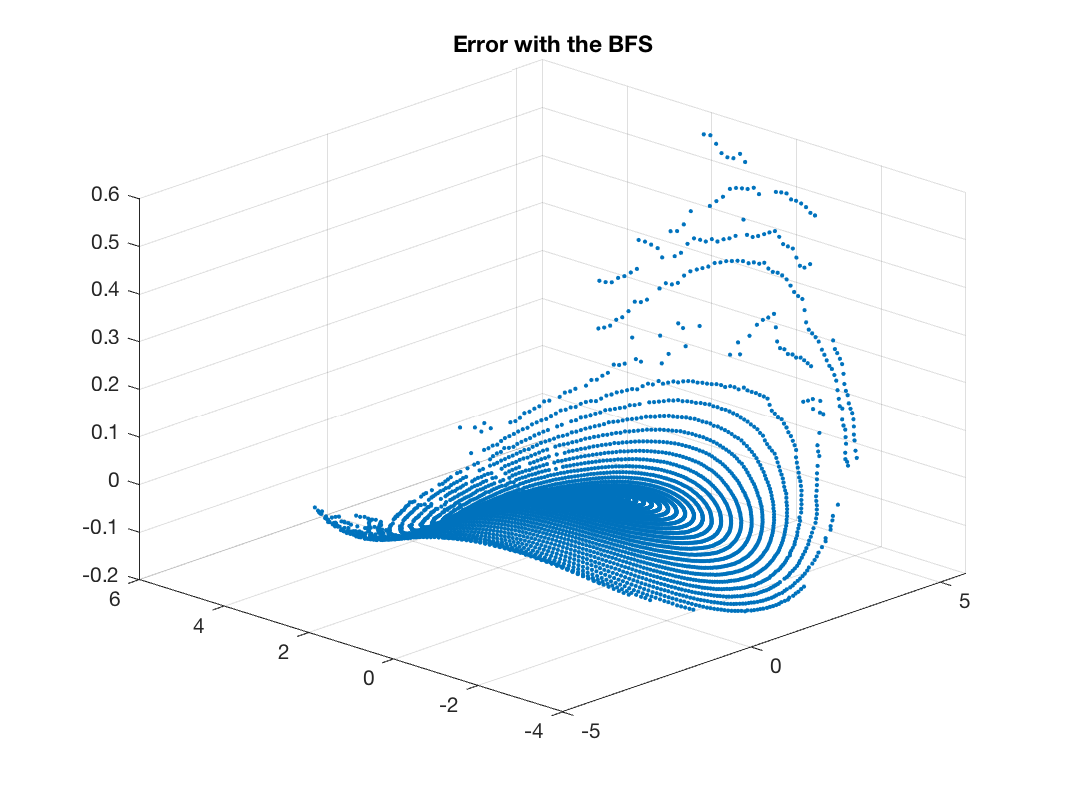}
\caption{Wavelet and Zernike coefficients of the data from a subject with keratoconus of Figure~\ref{FIG:Approximation3}}
\label{FIG:WaveletCoefsKera}
\end{figure}

% Astigmatism with N = 16
\begin{figure}[!h]
\centering
\includegraphics[width=.45 \textwidth]{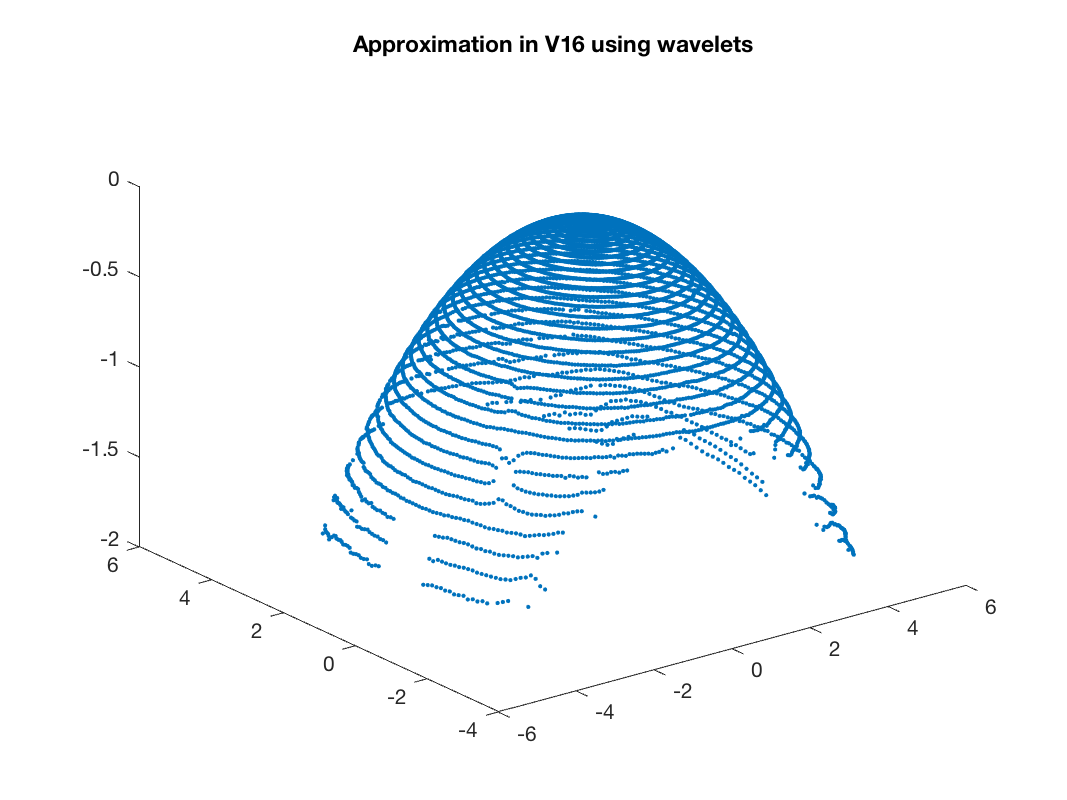}
\includegraphics[width=.45 \textwidth]{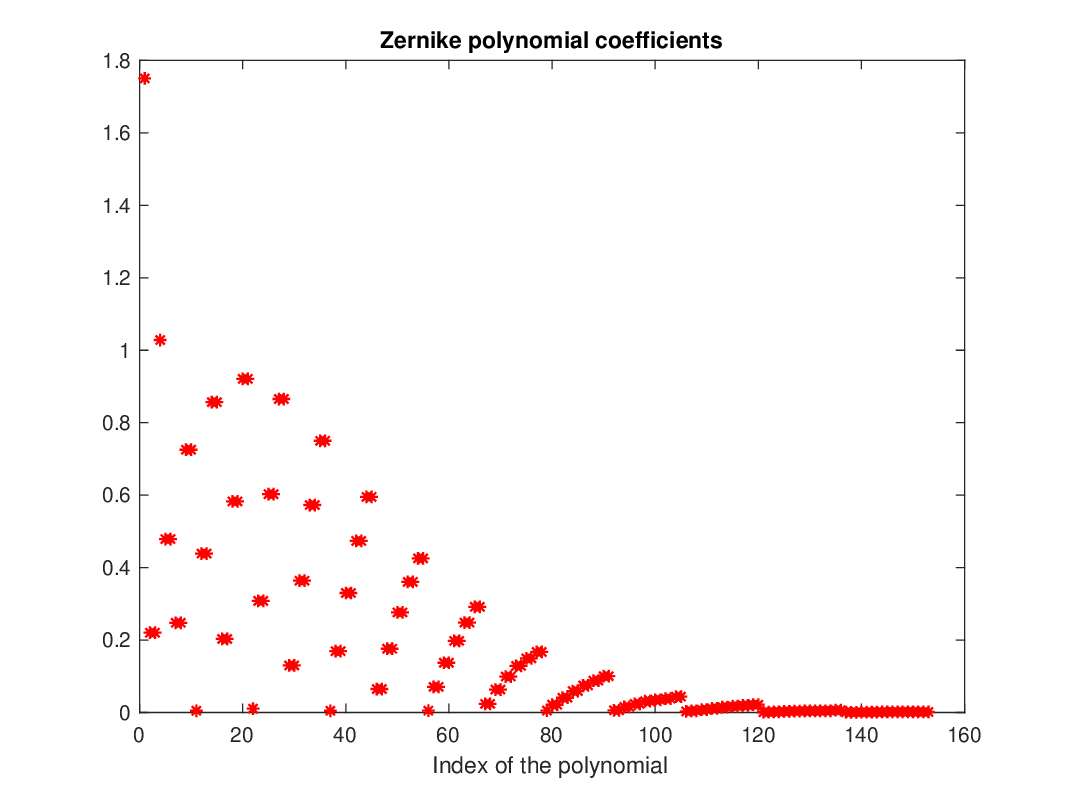}
\includegraphics[width=.45\textwidth]{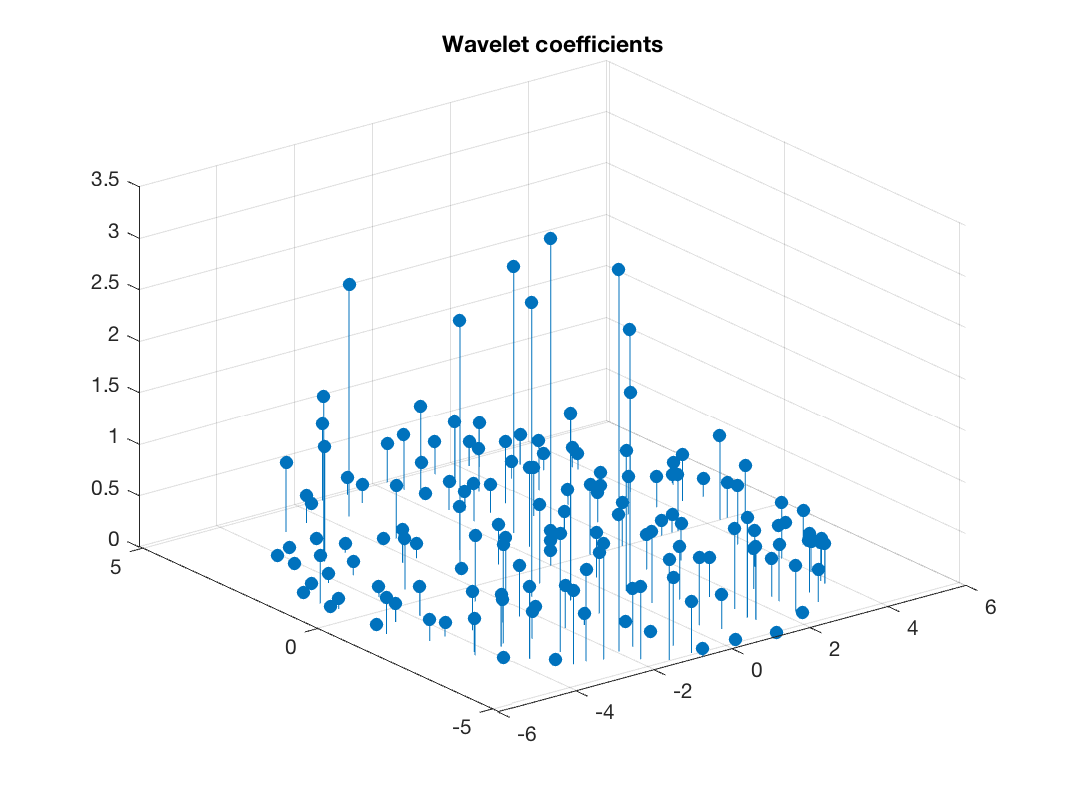}
\includegraphics[width=.45 \textwidth]{ErrorBFSAstig.eps}
\caption{Wavelet and Zernike coefficients of the astigmatism data of Figure~\ref{FIG:Approximation} using $N=16$}
\label{FIG:WaveletCoefsAstigN16}
\end{figure}

\section{Conclusions}
In this paper, it is shown how to represent 2D signals on circular domains using representations with a wavelet structure via kernel polynomials made from Zernike polynomials. The property of a kernel polynomial that makes it localized about the  parameter point and the nature of the Zernike polynomials defined on circular domains play significant roles in the process. The wavelet coefficients in the signal representation will indicate the locations of spatial frequencies in the data. When applied to ophthalmic data in the corneal surface, depending on the magnitude and location of these coefficients, certain aberrations such as astigmatism may be diagnosed. In feature extraction, image processing, and other applications, locations of Zernike wavelet coefficients could be related to the abnormalities of materials and images, a vast field to be explored in future. Our focus here was on development of the theory. Our hope for the future is to obtain more informative numerical results with the help of more powerful computational tools and better point selection for the wavelet functions using numerical methods such as the QR algorithm for Fekete points.

\section{Acknowledgments}
\label{sec:Acknowledge}
 The authors are immensely grateful to Dr.~D.~Robert Iskander for providing the data used for experimental results in this work. The first named author is also grateful to Dr.~Rodrigo Platte (Arizona State University) for useful discussions regarding the content of this paper. The first named author would like to acknowledge support from Simons Foundation under Award No.~709212. Finally, the authors thank the anonymous reviewer for valuable suggestions.
%The authors would like to acknowledge support from XXX under Award No.~1234. The authors are grateful to x, y, and z.

%-------------------------------------------
%%    References
%\nocite{*}
%\bibliographystyle{plain} %sort alphabetically by last name
\bibliographystyle{unsrt} %sort in numerical order of appearance
%\bibliography{enter name of .bib file without the .bib extension}

%
\end{document}